\numberwithin{equation}{section}
\theoremstyle{plain}
\newtheorem{thm}{Theorem}[section]
\newtheorem{lem}[thm]{Lemma}
\newtheorem{prop}[thm]{Proposition}
\newtheorem{cor}[thm]{Corollary}
\theoremstyle{definition}
\newtheorem*{thm*}{Theorem}
\theoremstyle{remark}
\newtheorem*{rem}{Remark}
\newcommand{\Ca}{\mathbb{C}^n}
\newcommand{\Ha}{\mathbb{H}^n}
\newcommand{\N}{\mathbb{N}}
\newcommand{\R}{\mathbb{R}}
\newcommand{\blue}[1]{\textcolor{blue}{#1}}
\newcommand{\red}[1]{\textcolor{red}{#1}}
\newcommand{\C}{\mathbb C}%
\newcommand{\Bea}{\begin{eqnarray*}}
	\newcommand{\Eea}{\end{eqnarray*}}
\newcommand{\be}{\begin{equation}}
\newcommand{\ee}{\end{equation}}
\title[Ornstein-Uhlenbeck operator]
{An extension problem, trace Hardy and Hardy's inequalities for  Ornstein-Uhlenbeck  operator}
\author[Ganguly, Manna, Thangavelu]
{Pritam Ganguly, Ramesh Manna and  Sundaram Thangavelu}
\address{Department of Mathematics, Indian Institute of Science, 560 012 Bangalore, India}
\email{pritamg@iisc.ac.in, rameshmanna@iisc.ac.in, veluma@iisc.ac.in}
\subjclass[2010]{Primary:26D10, 35J15, Secondary:26A33, 33C45, 35A23, 43A80}
\keywords{Extension problem, Trace hardy inequality, Hardy's inequality, Ornstein-Uhlenbeck operator.}
\date{}
\thanks{}
\begin{document}
\maketitle

\begin{abstract}
	In this paper, we study an extension problem for the Ornstein-Uhlenbeck operator $L=-\Delta+2x\cdot\nabla +n$ and we obtain various characterisations of the solution of the same. We use a particular solution of that extension problem to prove a trace Hardy inequality  for $L$   from which Hardy's inequality for fractional powers of $L$ is obtained.  We also prove an isometry property of the solution operator associated to the extension problem. Moreover, new $L^p-L^q$ estimates are obtained for the fractional powers of the Hermite operator.  
\end{abstract}

\tableofcontents
\section{Introduction and the main results}
 It is said that analysts are obsessed with inequalities. The usefulness of various weighted and unweighted inequalities in applications to problems in differential geometry, quantum mechanics, partial differential equations etc. have made this a very attractive area of research. Hardy's inequality is one such which   finds its origin in an old paper of G. H. Hardy \cite{H1} 
 written more than a hundred years ago (see also \cite{H2}).  In  recent years, this has been intensively studied in different settings and various contexts. For a historical review of Hardy's inequality, we refer the reader to the book of  Kufner et al. \cite{KMP}.
 
 We begin with recalling the classical Hardy's inequality which states that, given $f\in C^{\infty}_0(\R^n)$ we have 
 $$\frac{(n-2)^2}{4}\int_{\R^n}\frac{|f(x)|^2}{|x|^2}dx\leq \int_{\R^n}|\nabla f(x)|^2dx, \, ~n\geq 3$$ where $\nabla$ denotes the gradient in $\R^n$. This can be rephrased in terms of the Euclidean Laplacian $\Delta:=\sum_{j=1}^{n}\frac{\partial^2}{\partial x_j^2}$ as follows:
 $$\frac{(n-2)^2}{4}\int_{\R^n}\frac{|f(x)|^2}{|x|^2}dx\leq \langle (-\Delta)f,f\rangle $$ which has been generalised to fractional powers of the Laplacian. In fact, for $0<s<n/2$ and $f\in C^{\infty}_0(\R^n)$ the following holds
 \begin{equation}
 \label{h-homo}
 4^s\frac{\Gamma\left(\frac{n+2s}{4}\right)^2}{\Gamma\left(\frac{n-2s}{4}\right)^2}\int_{\R^n}\frac{|f(x)|^2}{|x|^{2s}}dx\leq \langle (-\Delta)^sf,f\rangle .  
 \end{equation}
  The constant appearing on the left hand side is known to be sharp (see e.g., \cite{B},\cite{Y}) but the equality is never achieved. In 2008, Frank et al. \cite{FLS}  used a ground state representation to give  a new proof of \ref{h-homo}, when $0<s<\min \{1,n/2 \}$ improving the previous results. On the other hand replacing the homogeneous weight $|x|^{-2s}$ by a non-homogeneous weight we have the following version of Hardy's inequality:
  \begin{equation}
  \label{h-nonhomo}
  4^s\frac{\Gamma\left(\frac{n+2s}{4}\right) }{\Gamma\left(\frac{n-2s}{4}\right) }\rho^{2s}\int_{\R^n}\frac{|f(x)|^2}{(\rho^2+|x|^2)^{2s}}dx\leq \langle (-\Delta)^sf,f\rangle, ~ \rho>0
  \end{equation}
  where the constant is sharp and the equality is achieved for the functions $(\rho^2+|x|^2)^{-(n-2s)/2}$. Note that proving such an inequality for fractional powers depends on how one views this type of operators. In fact, there are several ways of obtaining fractional powers of Laplacian. Caffarelli and Silvestre \cite{CS} first studied an extension problem associated to the Laplacian on $\R^n$ and obtained the fractional power as a mapping which takes Dirichlet data to the Neumann data. Motivated by this work, Boggarapu et al. \cite{BRT} studied the extension problem in a more general setting  of sum of squares of vector fields on certain stratified Lie groups. They used a solution of that extension problem to prove a trace Hardy inequality from which Hardy's inequality is obtained. Because of its several interesting features, the study of extension problem for various operators has received considerable attention in recent times, see e.g., \cite{RT3,ST} etc.
   
  Inspired by the work of Frank et al. \cite{FGMT}, Roncal-Thangavelu \cite{RT2} considered a modified extension problem for the sublaplacian on the H-type groups which gives conformally invariant fractional powers of the sublaplacian and they proved Hardy's inequality for the same. Although this inequality has been studied extensively  in the setting of Euclidean harmonic analysis, not  much has been studied in the frame work of Gaussian harmonic analysis.  As we   know  that the role of Laplacian in Gaussian harmonic analysis is played by the Ornstein-Uhlenbeck operator defined by $\tilde{L}:=-\Delta+ 2x \cdot\nabla $, it is therefore, natural to ask for such fractional Hardy  inequality for this operator. It is also convenient to work with $L:=-\Delta+  2x \cdot \nabla +n$ instead of $\tilde{L}$. Because of its various applications in probability theory, stochastic calculus etc.,  the study of Ornstein-Uhlenbeck operator experienced a lot of developments in the last couple of decades. We refer the reader to the   book of Urbina \cite{WU} in this regard. 
  
  \par  Our aim in this article is to establish Hardy and trace Hardy inequalities for fractional powers of the Ornstein-Uhlenbeck operator $ L $.  Recall that  $ L = -\Delta+2 x\cdot \nabla +n $ can be defined on the Gaussian $ L^2 $ space: $ L^2(\gamma)= L^2(\R^n, \gamma(x)dx), \gamma(x) = \pi^{-n/2}e^{-|x|^2} $ as a positive self adjoint operator.  We observe that $\sum_{j=1}^{n}\partial_{j}^*\partial_{j}=-\Delta+2x \cdot \nabla $  where $ \partial_j = \frac{\partial}{\partial x_j} $ and $ \partial_j^\ast = 2x_j- \partial_j $ is its adjoint on $ L^2(\gamma).$ The relation between $ L $ and the Hermite operator $ H = -\Delta+|x|^2 $ is given by $ M_\gamma L M_\gamma^{-1} = H $ where $ M_\gamma f(x) = \gamma(x)^{1/2} f(x).$  Hardy's inequality for the fractional powers  $ H^s $ of the Hermite operator has been studied in \cite{CRT}. Here $ H^s $ is defined by spectral theorem as  $$ H^s = \sum_{k=0}^\infty (2k+n)^s P_k$$
  	where $ (2k+n), k \in \N $ are the eigenvalues of $ H $ on $ L^2(\R^n) $ and $ P_k $ is the orthogonal projection of $ L^2(\R^n) $ onto the finite dimensional eigenspace corresponing to the eigenvalue $ (2k+n).$ However, there is another natural candidate for fractional powers of $ H $ and hence of $ L $ which will be treated here. 
  
     To motivate the new definition of fractional powers, denoted by $ H_s$ it is better to recall the conformally invariant fractional powers of the sublaplacian $ \mathcal{L} $ on the Heisenberg group $ \Ha$. The connection between $ \mathcal{L} $ and $ H$ is given by the relation $ \pi_\lambda(\mathcal{L}f) = \pi_\lambda(f) H(\lambda)$, where $ \pi_\lambda$ are the Schr\"odinger representations of $ \Ha $ and $ H(\lambda) = -\Delta+\lambda^2 |x|^2.$ The spectral decomposition of $ H(\lambda) $ is given by 
  	$$ H(\lambda) = \sum_{k=0}^\infty (2k+n)|\lambda| P_k(\lambda).$$ The conformally invariant fractional powers of $ \mathcal{L}$ are then defined, for $ 0 < s <  (n+1),$ by the relation
  	$$ \pi_\lambda(\mathcal{L}_s f) =  \pi_\lambda(f)  \sum_{k=0}^\infty (2|\lambda|)^s  \frac{\Gamma\left( \frac{2k+n+1+s}{2}\right)}{\Gamma\left( \frac{2k+n+1-s}{2}\right)} P_k(\lambda) .$$ 
  	The operator on the right hand side which multiplies $ \pi_\lambda(f) $ is the alternate candidate for fractional powers of $ H(\lambda) $ which we denote by $ H(\lambda)_s.$ By defining $ Q_k = M_\gamma^{-1} P_k M_\gamma$ the spectral decomposition of $ L $ becomes 
  	$  L = \sum_{k=0}^\infty (2k+n) Q_k $ and hence the fractional powers we are interested in are given by 
  	$$ L_s f(x)= \sum_{k=0}^\infty 2^s  \frac{\Gamma\left( \frac{2k+n+1+s}{2}\right)}{\Gamma\left( \frac{2k+n+1-s}{2}\right)} Q_k f(x) .$$ 
  	Along with $ L $ we also consider $ U = \frac{1}{2} L$ and the associated fractional powers 
  	$$ U_s f(x)= \sum_{k=0}^\infty 2^s  \frac{\Gamma\left( \frac{k+n/2+1+s}{2}\right)}{\Gamma\left( \frac{k+n/2+1-s}{2}\right)} Q_k f(x) .$$ For these operators, we prove the following inequality. Associated to $ A = L, U $ we define the trace norm of a function $ u(x,\rho) $ on $ \R^n \times [0,\infty) $ by
  	$$  a_s(A, u)^2 =  \int_{0}^{\infty}\int_{\mathbb{R}^n}\left(\left|\nabla_Au(x,\rho)\right|^2+\left(\frac{n}{2}+\frac{1}{4}\rho^2\right)u(x,\rho)^2\right)\rho^{1-2s}d\gamma(x)d\rho$$
  	where
  	$$ \nabla_{U}u:=\left( 2^{-1/2}\partial_1u, 2^{-1/2} \partial_2u,..., 2^{-1/2}\partial_nu, \partial_\rho u\right) $$
  	and $ \nabla_L $ is defined without the scaling factor $ 2^{-1/2}.$ 
  
  \begin{thm}[General trace Hardy inequality]
  	Let $0<s<1$ and let $ A $ be either $ L $ or $ U.$ Suppose $\phi\in L^2(\gamma)$ is a real valued function in the domain of $A_{s}$ such that $\phi^{-1} A_s\phi$ is locally integrable. Then for any    real valued   function $u(x,\rho)$ from the space $C^2_{0}\left([0,\infty), C^2_{b}(\mathbb{R}^n) \right)$   we have 
  	$$  a_s(A, u)^2 \ge 2^{1-2s}\frac{\Gamma(1-s)}{\Gamma(s)} \int_{\mathbb{R}^n}u(x,0)^2\frac{A_s\phi(x)}{\phi(x)}d\gamma(x).$$
  	
  \end{thm}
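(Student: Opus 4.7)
The plan is to adapt the Frank--Lieb--Seiringer ground state substitution method, used for analogous results in the Euclidean \cite{FLS}, H-type group \cite{RT2}, and Hermite \cite{CRT} settings. The crucial input is a particular solution $v(x,\rho)$ of the extension problem associated to $A$ (constructed earlier in the paper) which satisfies $v(x,0)=\phi(x)$, solves an extension PDE of the form
$$\partial_\rho^2 v + \tfrac{1-2s}{\rho}\partial_\rho v = B v + \tfrac{1}{4}\rho^2 v,$$
where $B=U$ when $A=U$ and $B=L-\tfrac{n}{2}$ when $A=L$, and exhibits the Neumann-type asymptotic
$$-\lim_{\rho\to 0^+}\rho^{1-2s}\partial_\rho v(x,\rho)=2^{1-2s}\frac{\Gamma(1-s)}{\Gamma(s)}A_s\phi(x).$$
Assuming temporarily that $\phi>0$, so that $v>0$ and $w:=u/v$ is well-defined, I substitute $u=vw$ and transfer the analysis onto the auxiliary function $w$.

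The starting algebraic identity, obtained by expanding $\nabla_A(vw)=v\nabla_A w + w\nabla_A v$, is
$$|\nabla_A u|^2 = \nabla_A v\cdot\nabla_A(vw^2) + v^2 |\nabla_A w|^2.$$
I multiply by $\rho^{1-2s}$, integrate against $d\gamma(x)\,d\rho$, and integrate by parts twice: in the spatial variables using $\partial_j^\ast = 2x_j - \partial_j$ on $L^2(\gamma)$, which contributes the spatial part of the extension operator (namely $L-n$ for $A=L$ and $U-\tfrac{n}{2}$ for $A=U$); and in $\rho$ against the weight $\rho^{1-2s}$, which produces $-\partial_\rho^2 v - \tfrac{1-2s}{\rho}\partial_\rho v$ together with boundary terms. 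The boundary term at $\rho=\infty$ vanishes by the compact $\rho$-support of $u$, while the one at $\rho=0$, combined with $u(x,0)=\phi(x)w(x,0)$ and the Neumann asymptotic, yields exactly
$$2^{1-2s}\frac{\Gamma(1-s)}{\Gamma(s)}\int_{\R^n}u(x,0)^2\,\frac{A_s\phi(x)}{\phi(x)}\,d\gamma(x).$$

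Adding up all bulk contributions --- the two integrations by parts and the potential $(\tfrac{n}{2}+\tfrac{1}{4}\rho^2)u^2$ --- the integrand collapses to $\bigl[Bv+\tfrac{1}{4}\rho^2 v-\partial_\rho^2 v-\tfrac{1-2s}{\rho}\partial_\rho v\bigr]\cdot vw^2$, which vanishes identically by the extension PDE. What survives is the identity
$$a_s(A,u)^2 = \int_0^\infty\!\!\int_{\R^n}\rho^{1-2s}v^2|\nabla_A w|^2\,d\gamma(x)\,d\rho + 2^{1-2s}\frac{\Gamma(1-s)}{\Gamma(s)}\int_{\R^n}u(x,0)^2\frac{A_s\phi(x)}{\phi(x)}\,d\gamma(x),$$
and discarding the first, manifestly non-negative summand gives the asserted trace Hardy inequality.

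The principal technical obstacle is justifying the integration by parts at $\rho=0$: this requires a precise two-term Caffarelli--Silvestre-type expansion $v(x,\rho)=\phi(x)+c\,A_s\phi(x)\,\rho^{2s}+o(\rho^{2s})$ that is sufficiently uniform (or dominated) in $x$ for the dominated convergence theorem to apply to the spatial integral of the boundary remainder; the hypothesis that $\phi^{-1}A_s\phi$ is locally integrable is tailored exactly so that this limit makes sense. A secondary issue is removing the temporary assumption $\phi>0$, which I would handle by regularising $\phi$ via $\phi_\varepsilon:=\sqrt{\phi^2+\varepsilon^2}$ (or by splitting off the set $\{\phi=0\}$), running the argument for $\phi_\varepsilon$, and then passing to the limit $\varepsilon\to 0^+$ by Fatou's lemma combined with the assumed local integrability.
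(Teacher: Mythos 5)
Your proposal is essentially the paper's own proof: the ground-state substitution $u=vw$ and the identity $|\nabla_A u|^2=\nabla_A v\cdot\nabla_A(vw^2)+v^2|\nabla_A w|^2$ is exactly the content of the paper's Lemma \ref{L2.1} (recorded there as an identity for $\int\int|\nabla_U u-(u/v)\nabla_U v|^2\rho^{1-2s}\,d\gamma\,d\rho$), after which one takes $v=T_{s,\rho}\phi$ so that the bulk term vanishes by the extension equation and the boundary term at $\rho=0$ produces the Neumann datum $2^{1-2s}\Gamma(1-s)\Gamma(s)^{-1}A_s\phi$. Like you, the paper writes out only the $A=U$ case; your additional attention to the positivity of $v$, to the uniformity of the two-term expansion at $\rho=0$, and to the normalisation of the spatial operator produced by the Gaussian integration by parts in the $A=L$ case addresses points the paper leaves implicit.
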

  It would be nice if we could choose a function $  \phi $ so that $ A_s \phi $ can be calculated explicitly. It turns out that for $ A = U $ we can do that. Indeed, with such a choice of $ \phi $ we can prove an explicit trace Hardy inequality from which a Hardy's inequality can be deduced. 
  
  \begin{thm}[Hardy's inequality for $U_s$]
  	\label{h-u_s}
  	Let $0<s<1$. Assume that $f\in L^2(\gamma)$ such that $U_{s}f\in L^2(\gamma)$.  Then   for every $ \rho > 0 $ we have  
  	\[\langle U_sf,f\rangle_{L^2(\gamma)} \ge (2\rho)^s \frac{\Gamma\left(\frac{n/2+1+s}{2}\right) }{\Gamma\left(\frac{n/2+1-s}{2}\right) } \int_{\R^n}\frac{f(x)^2}{(\rho+|x|^2)^s}
	 w_s(\rho+|x|^2)  \,\,d\gamma(x) \]
	  for an explicit  $ w_s(t) \geq 1.$ The inequality is sharp and equality is attained for $$ f(x) =C_{n,s}e^{\frac{|x|^2}{2}} (\rho+|x|^2)^{-(n/2+1+s)/2}K_{ (n/2+1+s)/2}(\rho+|x|^2)$$ 
	   where the constant $C_{n,s}$ is explicit, see \ref{phiexp}. 
  \end{thm}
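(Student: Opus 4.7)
The strategy is to deduce Theorem \ref{h-u_s} from the general trace Hardy inequality (Theorem 1.1) by specializing $A=U$ and making a judicious choice of the pair $(u,\phi)$. First, I would take $u(x,\rho)$ to be the solution of the $U$-extension problem with Dirichlet data $f(x)$; the extension-problem analysis developed earlier in the paper identifies $a_s(U,u)^2$ with a computable constant multiple of $\langle U_s f, f\rangle_{L^2(\gamma)}$, so that the left-hand side of Theorem 1.1 becomes, up to a multiplicative constant, precisely the Hardy quadratic form for $U_s$.

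Second, the auxiliary $\phi$ must be chosen so that the ratio $U_s\phi/\phi$ can be evaluated in closed form and reproduces the prescribed weight. The form of the extremizer forces the ansatz
$$\phi(x)=C_{n,s}\,e^{|x|^2/2}(\rho+|x|^2)^{-(n/2+1+s)/2}K_{(n/2+1+s)/2}(\rho+|x|^2).$$
Under the conjugation $M_\gamma L M_\gamma^{-1}=H$, computing $U_s\phi$ reduces to applying a spectral multiplier on the Hermite side to a Macdonald-function profile on $\R^n$. Inserting the integral representation $K_\nu(z)=\tfrac{1}{2}(z/2)^\nu\int_0^\infty t^{-\nu-1}e^{-t-z^2/(4t)}\,dt$ rewrites $\phi$ as a subordinated integral of Gaussian-type kernels on which the multiplier defining $U_s$ can be evaluated term by term. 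The gamma quotient $\Gamma((n/2+1+s)/2)/\Gamma((n/2+1-s)/2)$ drops out as a moment in the subordination variable, and the remaining factor takes the form $(\rho+|x|^2)^{-s}w_s(\rho+|x|^2)$ with $w_s$ expressible as a ratio of modified Bessel functions of different orders; the bound $w_s\geq 1$ would then follow from a standard monotonicity property of such ratios.

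Finally, assembling the two ingredients and tracking the constant $2^{1-2s}\Gamma(1-s)/\Gamma(s)$ from Theorem 1.1 together with the normalization from the first step yields the stated inequality. Sharpness and the identification of the extremizer come for free from the equality case in Theorem 1.1: equality occurs when $u$ coincides with the extension associated to $\phi$, i.e., when $f$ is the boundary trace of $\phi$, which is precisely the function displayed in the statement of the theorem.

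The main obstacle will be the second step: the closed-form evaluation of $U_s\phi/\phi$. It is not a routine gamma-function manipulation, because the multiplier defining $U_s$ is the new shifted gamma quotient rather than the one that arises from the standard heat subordination of $H$. Marrying the $K_\nu$ integral representation to a subordination formula matched to this shifted quotient, and then verifying that all normalizing constants across the Hermite/Ornstein--Uhlenbeck conjugation align to produce the exact weight, is where the real work lies.
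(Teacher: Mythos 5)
Your overall architecture agrees with the paper's: the inequality is deduced from the general trace Hardy inequality in the reduced form $\langle U_sf,f\rangle_{L^2(\gamma)}\ge \int_{\R^n} f^2\,\phi^{-1}U_s\phi\,d\gamma$ (Corollary \ref{preHard}; taking $u$ to be the extension of $f$ collapses the left-hand side exactly as you say, with the constant $2^{1-2s}\Gamma(1-s)/\Gamma(s)$ cancelling on both sides), and everything then hinges on producing a $\phi$ for which $U_s\phi/\phi$ is the stated weight. But the step you yourself flag as ``where the real work lies'' --- evaluating $U_s\phi$ for the Macdonald-function ansatz --- is the entire content of the proof, and your sketch of it does not go through as stated. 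The multiplier defining $U_s$, namely $2^s\Gamma\bigl(\tfrac{k+n/2+1+s}{2}\bigr)/\Gamma\bigl(\tfrac{k+n/2+1-s}{2}\bigr)$, grows like $(k+n/2)^s$ and so is not obtained by integrating the heat semigroup against any fixed profile; inserting the integral representation of $K_\nu$ and hoping that ``the gamma quotient drops out as a moment in the subordination variable'' has no mechanism behind it. As written, this is a genuine gap at the heart of the argument.

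The paper avoids applying $U_s$ to the Macdonald profile directly. It \emph{defines} a two-parameter family $\phi_{s,\rho}$ spectrally, as a Laguerre expansion whose $k$-th coefficient is (a normalization of) the Cowling--Haagerup function $L\bigl(\rho,\tfrac{2k+n}{4}+\tfrac{1+s}{2},\tfrac{2k+n}{4}+\tfrac{1-s}{2}\bigr)$. On such an expansion $U_s$ acts by multiplying coefficients by the gamma quotient, and the transformation identity \ref{3.7} for the $L$-function converts $C_{k,\rho}(-s)$ times that quotient into $(4\rho)^s$ times a $k$-independent constant times $C_{k,\rho}(s)$; this yields Lemma \ref{phichange}, $U_s\phi_{-s,\rho}=\Gamma((n/2+1+s)/2)^2\,\Gamma((n/2+1-s)/2)^{-2}(4\rho)^s\phi_{s,\rho}$, with no hard analysis. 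Only afterwards does a known Fourier-transform formula (Proposition \ref{phical}, imported from \cite{CRT} together with a cosine-transform identity) identify $\phi_{\pm s,\rho}$ with the explicit Macdonald profiles, so that the ratio $\phi_{s,\rho}/\phi_{-s,\rho}$ produces $(\rho+|x|^2)^{-s}$ times $w_s=K_{(n/2+1+s)/2}/K_{(n/2+1-s)/2}\ge 1$, the last bound by monotonicity of $K_\nu$ in $\nu$. Your treatment of sharpness is fine in spirit, and the paper verifies it by the one-line computation $\langle U_s\phi_{-s,\rho},\phi_{-s,\rho}\rangle=\int\phi_{-s,\rho}^2\,\phi_{-s,\rho}^{-1}U_s\phi_{-s,\rho}\,d\gamma$. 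To repair your argument you must supply, in place of the subordination heuristic, the $L$-function identity (or an equivalent mechanism) that performs the conversion of the gamma quotient into the factor $(\rho+|x|^2)^{-s}w_s$.
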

We remark that  since    $ w_s(t) \geq 1$ we have the following inequality which is slightly weaker:
\begin{equation}
\label{hardyuuu}
\langle U_sf,f\rangle_{L^2(\gamma)} \ge (2\rho)^s \frac{\Gamma\left(\frac{n/2+1+s}{2}\right) }{\Gamma\left(\frac{n/2+1-s}{2}\right) } \int_{\R^n}\frac{f(x)^2}{(\rho+|x|^2)^s}
d\gamma(x).
\end{equation} 
    However, written in this form, we do not yet know if the constant appearing in the above inequality  is sharp or not. Observe that the constant we have obtained is analogous to the sharp constant in the Euclidean case (See \ref{h-nonhomo}).  
   It is worth pointing out that the Hardy's inequality for the pure fractional powers $U^s$ can be deduced from Theorem \ref{h-u_s}. Indeed, writing $R_s:=U_sU^{-s}$, we see that $R_s$ is a bounded operator on $L^2(\gamma)$ and its operator norm is given by 
   $$\|R_s\|_{op}=\sup_{k\geq 0} \left(\frac{k+n/2}{2}\right)^{-s} \frac{\Gamma\left( \frac{k+n/2+1+s}{2}\right)}{\Gamma\left( \frac{k+n/2+1-s}{2}\right)}.$$ To estimate this norm we use the fact that for $\alpha>0$, $x^{\beta-\alpha}\frac{\Gamma(x+\alpha)}{\Gamma(x+\beta)}\leq \frac{x+\beta}{x+\alpha}$ (see \cite{RT1})  which gives the following estimate
   $$\left(\frac{k+n/2}{2}\right)^{-s} \frac{\Gamma\left( \frac{k+n/2+1+s}{2}\right)}{\Gamma\left( \frac{k+n/2+1-s}{2}\right)}\leq \frac{2k+n+2(1-s)}{2k+n+2(1+s)}.$$ The right hand side of the above inequality being an increasing function of $k$, we obtain $\|R_s\|_{op}\leq 1.$ 
      Using this Hardy's inequality for $U^s$ reads as 
   \begin{cor}
   	\label{h-u^s}
   	Let $0<s<1$. Assume that $f\in L^2(\gamma)$ such that $U^sf\in L^2(\gamma)$.  Then for any $\rho>0$ we have 
   	\[ \langle U^sf,f\rangle_{L^2(\gamma)} \ge (2\rho)^s \frac{\Gamma\left(\frac{n/2+1+s}{2}\right) }{\Gamma\left(\frac{n/2+1-s}{2}\right) } \int_{\R^n}\frac{f(x)^2}{(\rho+|x|^2)^s}d\gamma(x). \]
   \end{cor}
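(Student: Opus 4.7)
The plan is to deduce the corollary from the weakened trace Hardy inequality \ref{hardyuuu} for $U_{s}$ via a one-line spectral comparison, using the identity $U_{s}=R_{s}U^{s}$ together with the bound $\|R_{s}\|_{op}\le 1$ already established in the paragraph preceding the corollary. The starting point is the observation that $U_{s}$ and $U^{s}$ are diagonalised by the same spectral projections $Q_{k}$, so the relation between their quadratic forms is purely pointwise on the spectrum.

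The first step is to verify that the hypothesis $U^{s}f\in L^{2}(\gamma)$ is strong enough to apply \ref{hardyuuu}, i.e.\ that $U_{s}f\in L^{2}(\gamma)$. Since $U_{s}=R_{s}U^{s}$ on $L^{2}(\gamma)$ and $R_{s}$ is bounded with $\|R_{s}\|_{op}\le 1$, this is immediate:
$$\|U_{s}f\|_{L^{2}(\gamma)}=\|R_{s}U^{s}f\|_{L^{2}(\gamma)}\le\|U^{s}f\|_{L^{2}(\gamma)}<\infty.$$
Once this is in place, \ref{hardyuuu} applied to $f$ delivers the right-hand side of the corollary as a lower bound for $\langle U_{s}f,f\rangle_{L^{2}(\gamma)}$.

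The second step is to compare $\langle U_{s}f,f\rangle_{L^{2}(\gamma)}$ with $\langle U^{s}f,f\rangle_{L^{2}(\gamma)}$. Expanding both in the spectral basis $\{Q_{k}\}_{k\ge 0}$,
$$\langle U^{s}f,f\rangle_{L^{2}(\gamma)}-\langle U_{s}f,f\rangle_{L^{2}(\gamma)}=\sum_{k\ge 0}\left((k+n/2)^{s}-2^{s}\frac{\Gamma\left(\frac{k+n/2+1+s}{2}\right)}{\Gamma\left(\frac{k+n/2+1-s}{2}\right)}\right)\|Q_{k}f\|_{L^{2}(\gamma)}^{2},$$
and each bracketed coefficient is nonnegative precisely because the supremum of the opposite ratios equals $\|R_{s}\|_{op}\le 1$. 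This gives $\langle U^{s}f,f\rangle_{L^{2}(\gamma)}\ge\langle U_{s}f,f\rangle_{L^{2}(\gamma)}$, which chained with the inequality from the previous step is the statement of Corollary \ref{h-u^s}.

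There is no genuine obstacle here: all the real work, namely the construction of the extension problem solution, the proof of Theorem \ref{h-u_s} via a trace Hardy inequality, and the Gamma-ratio estimate from \cite{RT1} that yields $\|R_{s}\|_{op}\le 1$, is done before the corollary is stated. What remains is a purely spectral comparison, and the only point warranting a brief check is the passage of the $L^{2}(\gamma)$-hypothesis from $U^{s}$ to $U_{s}$, which is controlled by the same operator bound on $R_{s}$.
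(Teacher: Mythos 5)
Your proposal is correct and follows essentially the same route as the paper: both deduce the corollary from the weakened Hardy inequality \ref{hardyuuu} for $U_s$ together with the bound $\|R_s\|_{op}\le 1$, which termwise says the spectral multiplier of $U_s$ is dominated by $(k+n/2)^s$ and hence $\langle U_sf,f\rangle_{L^2(\gamma)}\le\langle U^sf,f\rangle_{L^2(\gamma)}$. Your explicit check that $U^sf\in L^2(\gamma)$ implies $U_sf\in L^2(\gamma)$ is a detail the paper leaves implicit, but it adds nothing beyond the same operator bound.
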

 As consequences of Hardy's inequality with non-homogeneous weight, we obtain a Heisenberg type uncertainty principle for the fractional powers of the Ornstein-Uhlenbeck operator. Indeed,  an application of Cauchy-Schwarz inequality yields 
  $$\int_{\R^n}|f(x)|^2d\gamma(x)\leq \left(\int_{\R^n}|f(x)|^2 (\rho+|x|^2)^sd\gamma(x)\right)^{1/2}\left(\int_{\R^n}\frac{f(x)^2}{(\rho+|x|^2)^s}d\gamma(x)\right)^{1/2}$$ which along with Theorem \ref{h-u_s} gives 
  \begin{cor}
  	For any $f\in L^2(\gamma)$ with $U_sf\in L^2(\gamma)$, we have 
  	$$ \left(\int_{\R^n}|f(x)|^2 (\rho+|x|^2)^sd\gamma(x)\right) \langle U_sf,f\rangle_{L^2(\gamma)} \ge (2\rho)^s \frac{\Gamma\left(\frac{n/2+1+s}{2}\right) }{\Gamma\left(\frac{n/2+1-s}{2}\right) } \left(\int_{\R^n} |f(x)|^2 d\gamma(x)\right)^2.$$
  \end{cor}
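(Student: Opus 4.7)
The plan is to derive the uncertainty principle directly from Theorem \ref{h-u_s} by combining it with the Cauchy--Schwarz inequality, exactly as foreshadowed in the paragraph immediately preceding the statement.

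First I would split $|f(x)|^2 = \bigl(|f(x)|\,(\rho+|x|^2)^{s/2}\bigr)\cdot\bigl(|f(x)|\,(\rho+|x|^2)^{-s/2}\bigr)$ and integrate against the Gaussian measure $d\gamma(x)$. Cauchy--Schwarz then gives
\[
\int_{\R^n} |f(x)|^2\, d\gamma(x) \leq \left(\int_{\R^n} |f(x)|^2 (\rho+|x|^2)^{s}\, d\gamma(x)\right)^{1/2}\left(\int_{\R^n} \frac{|f(x)|^2}{(\rho+|x|^2)^{s}}\, d\gamma(x)\right)^{1/2}.
\]
Squaring both sides produces an upper bound of the form $A\cdot B$, where $A$ is the weighted $L^2$ norm that appears on the left of the desired inequality, and $B$ is the ``homogeneous'' integral $\int |f|^2 (\rho+|x|^2)^{-s}\,d\gamma$. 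I would then invoke Theorem \ref{h-u_s} in the slightly weaker form \eqref{hardyuuu}, rewritten as
\[
\int_{\R^n} \frac{f(x)^2}{(\rho+|x|^2)^{s}}\, d\gamma(x) \leq (2\rho)^{-s}\,\frac{\Gamma\!\left(\tfrac{n/2+1-s}{2}\right)}{\Gamma\!\left(\tfrac{n/2+1+s}{2}\right)}\, \langle U_s f,f\rangle_{L^2(\gamma)},
\]
and substitute this estimate for $B$. Rearranging the Gamma quotient to the opposite side gives the inequality in the corollary.

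The main obstacle is essentially nonexistent: this is a two-line consequence of Cauchy--Schwarz and the already-established Hardy inequality. The only subtlety worth checking is that the hypothesis $f,U_s f \in L^2(\gamma)$ makes the right-hand side $\langle U_s f, f\rangle_{L^2(\gamma)}$ finite and well-defined, so that the rearrangement is legitimate; if the weighted norm $\int |f|^2 (\rho+|x|^2)^{s}\,d\gamma$ happens to be infinite, the inequality is trivially satisfied and nothing needs to be proved.
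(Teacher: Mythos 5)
Your proposal is correct and coincides with the paper's own argument: the authors derive the corollary by exactly the same splitting $|f|^2=\bigl(|f|(\rho+|x|^2)^{s/2}\bigr)\bigl(|f|(\rho+|x|^2)^{-s/2}\bigr)$, Cauchy--Schwarz against $d\gamma$, and substitution of the Hardy inequality \eqref{hardyuuu} for the weighted integral. Nothing further is needed.
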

  We must mention that   one can use the $L^2$ boundedness of $U_sL^{-s}$    along with the inequality for $U_s$ to derive inequality for $L^s$. Indeed, the operator norm of $\mathfrak{R}_s:=U_sL^{-s}$ is given by 
  $$\|\mathfrak{R}_s\|_{op}=\sup_{k\geq 0} 2^s(2k+n)^{-s} \frac{\Gamma\left( \frac{k+n/2+1+s}{2}\right)}{\Gamma\left( \frac{k+n/2+1-s}{2}\right)}$$ which can be estimated as above to get $\|\mathfrak{R}_s\|_{op}\leq 2^{-s} $. This together with the fact that $\|\mathfrak{R}_s\|_{op}\langle L^sf,f\rangle_{L^2(\gamma)}\ge \langle U_sf,f\rangle_{L^2(\gamma)}$ yields 
  \begin{thm}[Hardy's inequality for $L^s$]
  	   	Let $0<s<1$. Assume that $f\in L^2(\gamma)$ such that $L^sf\in L^2(\gamma)$.  Then for any $\rho>0$ we have 
  	  \[ \langle L^sf,f\rangle_{L^2(\gamma)} \ge (4\rho)^s \frac{\Gamma\left(\frac{n/2+1+s}{2}\right) }{\Gamma\left(\frac{n/2+1-s}{2}\right) } \int_{\R^n}\frac{f(x)^2}{(\rho+|x|^2)^s}d\gamma(x). \]
  \end{thm}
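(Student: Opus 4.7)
The plan is to deduce the inequality for $L^s$ from the already-established Hardy inequality for $U_s$ (inequality~\eqref{hardyuuu}) by comparing the two operators through the auxiliary multiplier $\mathfrak{R}_s := U_s L^{-s}$, exactly as sketched by the authors in the paragraph preceding the theorem. The guiding observation is that $U_s$ and $L^s$ are both spectral functions of $L$, so a pointwise estimate on their spectra translates directly into a comparison of quadratic forms. Concretely, from the spectral decomposition $L = \sum_{k\ge 0}(2k+n)Q_k$ one reads off that $\mathfrak{R}_s$ acts on the range of $Q_k$ as multiplication by
\[ r_s(k) := 2^s (2k+n)^{-s}\, \frac{\Gamma\!\left(\tfrac{k+n/2+1+s}{2}\right)}{\Gamma\!\left(\tfrac{k+n/2+1-s}{2}\right)}, \]
whence $\|\mathfrak{R}_s\|_{op} = \sup_{k\ge 0} r_s(k)$. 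Applying the Gamma ratio estimate $x^{\beta-\alpha}\Gamma(x+\alpha)/\Gamma(x+\beta) \le (x+\beta)/(x+\alpha)$ of~\cite{RT1} in exactly the manner used in the derivation of Corollary~\ref{h-u^s} (where $\|R_s\|_{op}\le 1$ was obtained for $R_s = U_s U^{-s}$) bounds $r_s(k)$ by a sequence increasing in $k$ with limit $2^{-s}$, giving $\|\mathfrak{R}_s\|_{op} \le 2^{-s}$.

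Since $U_s$ and $L^s$ are simultaneously diagonalisable non-negative self-adjoint operators, this operator norm bound upgrades at once to a quadratic form inequality
\[ \langle U_s f, f\rangle_{L^2(\gamma)} = \sum_{k\ge 0} r_s(k)(2k+n)^s\|Q_k f\|_{L^2(\gamma)}^2 \le 2^{-s}\,\langle L^s f, f\rangle_{L^2(\gamma)}. \]
Combining with the Hardy inequality~\eqref{hardyuuu} for $U_s$ and multiplying through by $2^s$ produces the factor $2^s \cdot (2\rho)^s = (4\rho)^s$ claimed in the theorem, while preserving the Gamma-quotient constant $\Gamma((n/2+1+s)/2)/\Gamma((n/2+1-s)/2)$. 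The only step with real content is the Gamma ratio bound, and this is the main obstacle; however it is essentially routine, mirroring the estimate already carried out for $R_s$ just after Theorem~\ref{h-u_s}, so no essentially new difficulty arises.
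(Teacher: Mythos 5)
Your proposal is correct and follows exactly the route the paper itself takes in the paragraph preceding the theorem: bound the spectral multiplier $\mathfrak{R}_s=U_sL^{-s}$ by $2^{-s}$ via the Gamma-ratio estimate of \cite{RT1}, upgrade this to the quadratic-form comparison $\langle U_sf,f\rangle_{L^2(\gamma)}\le 2^{-s}\langle L^sf,f\rangle_{L^2(\gamma)}$ using simultaneous diagonalizability, and combine with \eqref{hardyuuu}. If anything, your write-up is slightly more careful than the paper's, since you justify the passage from the operator-norm bound to the form inequality through the spectral decomposition rather than asserting it.
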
   
    
    The main ingredient in proving the above mentioned trace Hardy and Hardy's inequality for fractional powers of $L$ is the  solution of the extension problem for $L$: 
    	\begin{equation}
    	\label{ep1}
    	\Big( -L + \partial_\rho^2 + \frac{1-2s}{\rho} \partial_\rho - \frac{1}{4}\rho^2 \Big) u(x,\rho) = 0,\,\, u(x,0) = f(x) .
    	\end{equation} 
    	As can be seen later that a solution of the above partial differential equation will play a very crucial role for our purpose.  The second theme of this article is the study of general solutions of the extension problem for $L$ under consideration. In fact, we prove a characterization  of the solution when the initial data is a tempered distribution. In order to state the result we need to introduce some more notations which will be  briefly described  here. More details can be found in Section 3.
 We introduce the following two operators.  For any   distribution $f$ for which $ M_\gamma f $ is tempered, we define
  $$S^1_{\rho}f(x) :=\frac{(\frac{1}{2}\rho^2)^{\frac{s-1}{2}}}{\Gamma(s)}\sum_{k=0}^{\infty}\Gamma\left(\frac12(2k+n+s+1)\right)W_{-(k+n/2),s/2}( \rho^2/2)\,\,Q_kf(x) $$  and for  any  function $g$ for which  $Q_kg$ has enough decay as a function of $ k $ we define
 $$	S^2_{\rho}g(x)=\left(\frac{1}{2}\rho^2\right)^{\frac{s-1}{2}}\sum_{k=0}^{\infty}M_{-(k+n/2),s/2}( \rho^2/2)\,\,Q_kg(x)$$ where $W_{-(k+n/2),s/2} $ and $M_{-(k+n/2),s/2}$ are Whittaker functions. 
 
 In view of the asymptotic properties of the Whittaker functions stated in Lemma \ref{Walarge} it follows that the series  defining $ S_\rho^1 f $ converges for any tempered distribution $ M_\gamma f.$  Moreover, if we take $ g $ from $ H^2_{\gamma, \rho}(\R^n) $ which is the image of $ L^2(\R^n, \gamma) $ under the semigroup $ e^{- \rho \sqrt{L}} $ then the series defining $ S_\rho^2 g $ also converges and defines a smooth function. With these notations we prove the following characterization:       

    \begin{thm}
    	 Let $ f $ be a distribution  such that $ M_\gamma f$ is tempered. Then any function  $u(x,\rho)$   for which $ M_\gamma u(x,\rho) $ is tempered in $ x, $ is a solution of the extension problem \ref{ep1}  with initial condition $ f $ if and only if $u(x,\rho)=S^1_{\rho}f(x)+S^2_{\rho}g(x)$ for some $g \in  \cap_{t>0}H^2_{\gamma,t}(\R^n).$ 
    \end{thm}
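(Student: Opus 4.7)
The plan is to reduce the PDE to a family of second-order ODEs in $\rho$, one for each spectral component, and then identify the two linearly independent solutions with Whittaker functions, matching the initial condition against their small-$\rho$ behavior.

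First I would use the spectral decomposition $L=\sum_{k=0}^\infty (2k+n) Q_k$ and apply $Q_k$ to both sides of \ref{ep1}. Since $Q_k$ commutes with the $\rho$-derivatives and with multiplication by functions of $\rho$ alone, setting $u_k(\rho)=Q_k u(\cdot,\rho)$ reduces the problem, componentwise, to
\begin{equation*}
u_k''(\rho)+\frac{1-2s}{\rho}u_k'(\rho)-\Big(\frac14\rho^2+(2k+n)\Big)u_k(\rho)=0,\qquad u_k(0)=Q_kf.
\end{equation*}
The change of variables $t=\rho^2/2$ together with the substitution $u_k(\rho)=\rho^{s-1}v_k(\rho^2/2)$ transforms this equation into Whittaker's equation with parameters $\kappa=-(k+n/2)$ and $\mu=s/2$. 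Consequently, every solution is a linear combination
\begin{equation*}
u_k(\rho)=a_k\,\rho^{s-1}W_{-(k+n/2),\,s/2}(\rho^2/2)+b_k\,\rho^{s-1}M_{-(k+n/2),\,s/2}(\rho^2/2).
\end{equation*}

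Next I would pin down the coefficients from the behavior as $\rho\to 0$. Using the standard small-$z$ asymptotics, $\rho^{s-1}M_{-(k+n/2),s/2}(\rho^2/2)\sim c\,\rho^{2s}\to 0$, while $\rho^{s-1}W_{-(k+n/2),s/2}(\rho^2/2)$ has a nonzero finite limit whose value is exactly $2^{(s-1)/2}\Gamma(s)/\Gamma\!\bigl(\tfrac12(2k+n+s+1)\bigr)$ times a universal constant. Imposing $u_k(0)=Q_kf$ forces
\begin{equation*}
a_k=\frac{2^{(1-s)/2}\,\Gamma\!\bigl(\tfrac12(2k+n+s+1)\bigr)}{\Gamma(s)}\,Q_kf,
\end{equation*}
which is precisely the $k$-th coefficient in the definition of $S^1_\rho f$. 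The remaining piece $u(x,\rho)-S^1_\rho f(x)$ then has $Q_k$-components $b_k\,\rho^{s-1}M_{-(k+n/2),s/2}(\rho^2/2)$, so it is natural to define $g$ by $Q_kg=b_k$ and obtain $u=S^1_\rho f+S^2_\rho g$.

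Finally I would verify convergence. In the ``if'' direction, the large-$k$ asymptotics of $W_{-(k+n/2),s/2}$ quoted in Lemma~\ref{Walarge} show that the $S^1_\rho$-series converges for any tempered $M_\gamma f$ and yields a solution when differentiated term by term; for $S^2_\rho g$, the rapid decay of $Q_kg$ implied by $g\in\cap_{t>0}H^2_{\gamma,t}(\R^n)$ is exactly what is needed for the series and its derivatives to converge locally uniformly in $\rho>0$. The main obstacle is the converse: showing that the coefficient sequence $(b_k)$ arising from an arbitrary solution $u$ with $M_\gamma u(\cdot,\rho)$ tempered in $x$ actually defines an element of $\bigcap_{t>0}H^2_{\gamma,t}(\R^n)$. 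This requires quantitative control — using the exponential decay of $M_{-(k+n/2),s/2}(\rho^2/2)$ in $k$ for fixed $\rho$ — to trade the temperedness of $M_\gamma u(\cdot,\rho)$ in $x$ against the growth of $b_k$, producing for each fixed $t>0$ the decay $e^{-t\sqrt{2k+n}}Q_kg\in L^2(\gamma)$ characterizing the semigroup image $H^2_{\gamma,t}(\R^n)$.
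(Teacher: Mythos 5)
Your proposal follows the paper's proof essentially verbatim: Fourier--Hermite (spectral) reduction of the PDE to a one-dimensional ODE in $\rho$ for each component, transformation to Whittaker's equation, identification of the coefficient of $W_{-(k+n/2),s/2}$ from the small-$\rho$ asymptotics to match the initial condition, and in the converse direction the trade-off between the temperedness of $M_\gamma u(\cdot,\rho)$ and the size of the $M$-Whittaker component to force $g\in\cap_{t>0}H^2_{\gamma,t}(\R^n)$. One correction of wording only: for fixed $\rho>0$ the function $M_{-(k+n/2),s/2}(\rho^2/2)$ \emph{grows} like $e^{c\rho\sqrt{2k+n}}$ as $k\to\infty$ (it does not decay), and it is exactly this growth, set against the polynomial bound on $\widetilde{u}(\alpha,\rho)$, that forces $b_k$ to decay like $e^{-c\rho\sqrt{2k+n}}$ for every $\rho>0$, which is the characterization of $\cap_{t>0}H^2_{\gamma,t}(\R^n)$ you need.
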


  We also prove another characterization of the solution of the extension problem in terms of its holomorphic extendability. In order to state this we need to introduce some more notations. 
  For any $ t, \delta > 0 $ we consider the following positive weight function
  $$w_t^{\delta}(x,y) =\frac{1}{\Gamma(\delta)}  \int_{\R^n} e^{-2ux}  \left( 1- \frac{|u|^2+|y|^2}{t^2}\right)_{+}^{\delta-1} \, e^{-(|u|^2+|y|^2)} \, du .$$  For any $\rho>0$ we let  $H^2(\Omega_\rho,w^{2s}_{\rho})$  stand for the weighted Bergman space consisting of holomorphic functions on the tube domain $\Omega_\rho:=\{z=x+iy\in\mathbb{C}^n:|y|<\rho\}$ belonging to $L^2(\Omega_\rho, w^{2s}_{\rho}).$  Also for $m\in \R$, let $W^m_{H}(\R^n)$ stand for the Sobolev space associated to the Hermite operator $H$. This is a  Hilbert space in which the norm is given by 
$$\|f\|^2_{W^m_{H}}:=\sum_{k=0}^{\infty}(2k+n)^{2m}\|P_kf\|_2^2.$$
\begin{thm}
	\label{hLLL}
		A solution of the extension problem \ref{ep1} is of the form $u(x,\rho)=S^1_{\rho}f(x)$ for some distribution $f$ such that   $M_{\gamma}f\in W^{m_n}_{H}(\R^n)$ where $2m_n=-(2n+1)/4$ if and only if for every $\rho>0$, $M_{\gamma} u(.,\rho)$ extends holomorphically to $\Omega_{\rho/2}$ and   satisfies the uniform estimate 
	\begin{equation*}
	\int_{\Omega_{ \rho/2}}  | M_{\gamma}u(z,\rho)|^2  w^{2s}_{ \rho/2}(z) dz \leq C \rho^{n-1/2}   
	\end{equation*} 
	for all $ 0 < \rho \leq 1.$
\end{thm}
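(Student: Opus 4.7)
The plan is to prove both implications via a single Parseval-type identity that diagonalizes the weighted Bergman norm on $\Omega_{\rho/2}$ against the Hermite spectral decomposition of $H$. For the forward direction, start from $u = S^1_\rho f$ and apply $M_\gamma$, using $M_\gamma Q_k = P_k M_\gamma$, to obtain
$$M_\gamma u(x,\rho) = \frac{(\rho^2/2)^{(s-1)/2}}{\Gamma(s)} \sum_{k=0}^\infty \Gamma\!\bigl(\tfrac{2k+n+s+1}{2}\bigr)\, W_{-(k+n/2),s/2}(\rho^2/2)\, P_k(M_\gamma f)(x).$$
Each $P_k(M_\gamma f)$ is a finite linear combination of Hermite functions, each of which extends entire on $\C^n$, so $M_\gamma u(\cdot,\rho)$ admits a termwise holomorphic continuation. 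The task is to quantify this extension in the weighted Bergman norm.

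The technical heart is a Parseval identity
$$\int_{\Omega_{\rho/2}} |F(z)|^2\, w^{2s}_{\rho/2}(z)\, dz = \sum_{k=0}^\infty \lambda_k(\rho,s)\, \|P_k F\|_2^2,$$
valid for holomorphic extensions of suitable $L^2$ functions, with explicit $\lambda_k(\rho,s)$ of Gamma/Whittaker type. The defining integral of $w_t^{\delta}$ is tailor-made for this: the factor $e^{-2ux}$ combined with the Gaussian $e^{-|u|^2}$ is a Fourier--Wigner type pairing that sifts out the Hermite spectrum, while the truncation $(1 - (|u|^2+|y|^2)/t^2)_+^{\delta-1}$, via Mellin inversion, is a spectral multiplier of $H$. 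Once this is in place, orthogonality in $k$ reduces the Bergman bound to a single Hermite sum whose weights, after invoking Lemma \ref{Walarge} and Stirling, collapse to a constant multiple of $\rho^{n-1/2}(2k+n)^{2m_n}$ uniformly for $0 < \rho \leq 1$, which is precisely the $W^{m_n}_H$-norm. This yields the stated estimate in one direction and characterizes it in the other.

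For the reverse direction, the preceding characterization writes any solution as $u = S^1_\rho f + S^2_\rho g$ with $g \in \bigcap_{t>0} H^2_{\gamma,t}(\R^n)$. Substituting into the same Parseval identity produces a mixed sum involving $W_{-(k+n/2),s/2}(\rho^2/2)$ and $M_{-(k+n/2),s/2}(\rho^2/2)$ together with a cross term. Because $W$ and $M$ are linearly independent solutions of the Whittaker ODE with distinct $\rho \to 0^+$ homogeneities (one behaving like $\rho^{1-s}$, the other like $\rho^{1+s}$), the uniform $\rho^{n-1/2}$ bound cannot be met unless the $S^2_\rho g$ contribution vanishes at every spectral level; combined with $g \in \bigcap_{t>0} H^2_{\gamma,t}$ this forces $Q_k g = 0$ for every $k$, hence $g = 0$. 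With $g = 0$, reading the Parseval identity in reverse converts the Bergman estimate into $\sum_k (2k+n)^{2m_n}\|P_k(M_\gamma f)\|_2^2 < \infty$, i.e. $M_\gamma f \in W^{m_n}_H(\R^n)$.

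The main obstacle is the explicit Parseval identity. Computing $\int_{\Omega_{\rho/2}}$ of a product of holomorphic continuations of two Hermite functions against $w^{2s}_{\rho/2}$, carrying out the $u$- and $y$-integrals in closed form, and recognizing the outcome as a Whittaker expression that (i) diagonalizes in the Hermite index and (ii) combines with the Whittaker factors already inside $S^1_\rho$ so that all $k$-dependent Gamma factors telescope into the pure Sobolev weight $(2k+n)^{2m_n}$ is where the combinatorics must come out just right. Calibrating the overall $\rho$-power so that it lands exactly at $n-1/2$, rather than a slightly different rate, is the most delicate piece of bookkeeping.
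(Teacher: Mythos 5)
Your overall strategy coincides with the paper's: termwise holomorphic continuation of the Hermite components, a Parseval-type identity diagonalizing the weighted Bergman norm over the Hermite spectrum, and Whittaker/Laguerre asymptotics collapsing the resulting weights to $(2k+n)^{2m_n}$ with the $\rho^{n-1/2}$ prefactor. But there are two genuine gaps. First, the Parseval identity that you yourself flag as ``the main obstacle'' is left unproved, and the mechanism you sketch for it (a Fourier--Wigner pairing plus Mellin inversion turning the truncation into a spectral multiplier of $H$) is not how it goes. The paper obtains it (Proposition \ref{identity}) by integrating Gutzmer's formula for Hermite expansions against the rotation-invariant measure $\bigl(1-(|u|^2+|y|^2)/t^2\bigr)_+^{\delta-1}e^{-(|u|^2+|y|^2)}\,dy\,du$ and then using the fractional-integral identity converting $L_k^{n-1}$ into $L_k^{n+\delta-1}$; the weight $w_t^{\delta}$ is exactly what Gutzmer's formula spits out, not something recovered from a multiplier calculus. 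Without this ingredient both directions of your argument are unsupported.

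Second, in the converse your reason for killing the $S^2_\rho g$ component is the wrong one. The two Whittaker solutions do have distinct $\rho\to 0^+$ behaviour, but $(\rho^2/2)^{(s-1)/2}M_{-(k+n/2),s/2}(\rho^2/2)\to 0$, so the $M$-part contributes nothing at the boundary and its small-$\rho$ homogeneity is perfectly compatible with a uniform $O(\rho^{n-1/2})$ bound. What actually eliminates it is large-$k$ growth: for fixed $\rho$ both $M_{-(k+n/2),s/2}(\rho^2/2)$ and the Laguerre factor $L_k^{n+2s-1}(-\rho^2/2)$ appearing in the Parseval weights grow exponentially in $\sqrt{k}$, and the finiteness of the Bergman norm is what forces the coefficients $C_2(k)$ to vanish; this is the argument the paper runs. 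You also invoke the decomposition $u=S^1_\rho f+S^2_\rho g$ of Theorem \ref{ch}, which presupposes a tempered boundary datum $f$; in the converse direction the existence and identification of $f$ is part of what has to be proved. The paper handles this by normalizing the spectral data of $u(\cdot,\rho)$ by the Whittaker factors, deducing a uniform $W^{m_n}_H$ bound on the resulting family $g_\rho$ for $0<\rho\leq 1$, and extracting a weak limit $f$ along a sequence $\rho_m\to 0$ via Banach--Alaoglu. These two repairs would bring your outline in line with a complete proof.
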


   We conclude the introduction by describing the plan of the paper. In section 2, we study an extension problem for the Ornstein-Uhlenbeck operator. We provide two representations of  solutions and their equivalence. In section 3, we prove several characterisations of the solution of the extension problem under consideration.    Using the results obtained in Section 2, we prove trace Hardy and Hardy's inequality in section 4. Then in section 5, we prove an isometry property of the solution to the extension problem. Finally we end our discussion by proving a Hardy-Littlewood-Sobolev type inequality for the fractional powers of Hermite operator in section 6.
    
  \section{The extension problem for the Ornstein-Uhlenbeck operator\\
  	and fractional powers}
  \subsection{The extension problem} Our strategy to prove Hardy's inequality for $ L_s $ is via trace Hardy inequality which in turn requires the study of the following  extension problem for the operator $ L$:
  	\begin{equation}
  	\label{ep} 
  	\Big( -L + \partial_\rho^2 + \frac{1-2s}{\rho} \partial_\rho - \frac{1}{4}\rho^2 \Big) u(x,\rho) = 0,\,\, u(x,0) = f(x) .
  	\end{equation} 
  	If $ u $ is a  solution of the above problem, it follows that $ v(x,\rho) = M_\gamma u(x,\rho) $ solves the problem
  	\begin{equation}
  	\label{epH}
  	\Big( -H + \partial_\rho^2 + \frac{1-2s}{\rho} \partial_\rho - \frac{1}{4}\rho^2 \Big) v(x,\rho) = 0,\,\, v(x,0) = M_\gamma f(x) .
  	\end{equation}
  	A solution of the above problem can be obtained in terms of the solution of an extension problem for the sublaplacian on the Heisenberg group.  
  
   Let $ \mathcal{L} $ be the sublaplacian on the Heisenberg group $ \mathbb{H}^n .$ Then a solution of the following extension problem   
  	$$  \Big( -\mathcal{L} + \partial_\rho^2 + \frac{1-2s}{\rho} \partial_\rho +\frac{1}{4}\rho^2 \partial_t^2 \Big) w(z,t,\rho) = 0,\,\, w(z,t,0) = f(z,t) $$    is given by (see \cite{RT2})
  	$ w(z,t,\rho) = \rho^{2s} f \ast \Phi_{s,\rho}(z,t)$, where $\Phi_{s,\rho}$ is an explicit function given by $$\Phi_{s,\rho}(z,a) = \frac{2^{-(n+1+s)} }{ \pi^{n+1}\Gamma(s)}\Gamma\left(\frac{n+1+s}{2}\right)^2\left((\frac14\rho^2+\frac{1}{4}|z|^2)^2+a^2\right)^{-\frac{n+1+s}{2}}. $$ 
	  If we let $ \pi $ stand for the Schrodinger representation of $ \mathbb{H}^n $ on $ L^2(\R^n). $ Then 
  	we have the following result. 
  
   \begin{thm} \label{thm21}  For any $ f \in L^2(\gamma) $ the function  $ v(x,\rho) $ defined by the equation
  		$$ v(x,\rho)  =   \rho^{2s} \int_{\mathbb{H}^n}  \Phi_{s,\rho}(g) \pi(g)^\ast M_\gamma f(x)  dg $$ solves the extension problem for the Hermite operator with initial condition $ M_\gamma f$.
  		Consequently, $ u(x,\rho) = e^{\frac{1}{2}|x|^2} v(x,\rho) $ solves the extension problem for $ L.$
  	\end{thm}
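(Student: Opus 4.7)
The plan is to verify directly that the claimed $v(x,\rho)$ satisfies the Hermite extension equation, by reducing the computation to the sublaplacian extension equation satisfied by the kernel $\rho^{2s}\Phi_{s,\rho}$ on $\Ha$ that is recalled just before the theorem. Two standard intertwining properties of the Schr\"odinger representation $\pi=\pi_1$ are the main tools. First, since $d\pi_1(\mathcal{L})=H$, for any left-invariant vector field $X$ on $\Ha$ one computes $X_g\pi(g)^{\ast} = -d\pi(X)\pi(g)^{\ast}$, and squaring yields $\mathcal{L}_g[\pi(g)^{\ast}\varphi](x) = H_x[\pi(g)^{\ast}\varphi](x)$ for every $\varphi\in L^2(\R^n)$. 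Second, since $\pi(0,t)=e^{it}I$ at $\lambda=1$, one has $\partial_t[\pi(g)^{\ast}\varphi]=-i[\pi(g)^{\ast}\varphi]$ and therefore $\partial_t^2[\pi(g)^{\ast}\varphi]=-[\pi(g)^{\ast}\varphi]$.

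Armed with these identities, I would differentiate under the integral in the definition of $v$: the $\rho$-derivatives and the multiplier $-\tfrac14\rho^2$ act on $\rho^{2s}\Phi_{s,\rho}(g)$, while $H_x$ acts on $\pi(g)^{\ast}M_\gamma f$. The sublaplacian extension equation lets me rewrite $[\partial_\rho^2+\tfrac{1-2s}{\rho}\partial_\rho](\rho^{2s}\Phi_{s,\rho})$ as $\mathcal{L}(\rho^{2s}\Phi_{s,\rho}) - \tfrac14\rho^2\partial_t^2(\rho^{2s}\Phi_{s,\rho})$. Integrating by parts on $\Ha$, where $\mathcal{L}$ and $\partial_t^2$ are self-adjoint, transfers $\mathcal{L}$ and $\partial_t^2$ onto the factor $\pi(g)^{\ast}M_\gamma f$, where the two identities above convert them into $H_x$ and into multiplication by $-1$ respectively. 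The resulting $H_x$-terms cancel against the original $-Hv$ contribution, and the $\partial_t^2$-contribution produces $+\tfrac14\rho^2 v$, which exactly balances the direct $-\tfrac14\rho^2 v$ term, leaving zero.

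The initial condition $v(x,0)=M_\gamma f(x)$ reduces to showing $\rho^{2s}\pi(\Phi_{s,\rho})\to I$ strongly on $L^2(\R^n)$ as $\rho\to 0^+$; since $\Phi_{s,\rho}$ is radial in $z$ and even in the center variable, the operator $\pi(\Phi_{s,\rho})$ commutes with the Hermite projections $P_k$, and the convergence may be verified by computing the explicit scalar by which it acts on each eigenspace. The assertion about $u$ then follows immediately from the conjugation $L=M_\gamma^{-1}HM_\gamma$: multiplication by $e^{|x|^2/2}$ commutes with $\partial_\rho$ and with multiplication by $\tfrac14\rho^2$, so applying $M_\gamma^{-1}$ to the Hermite equation for $v$ yields the $L$-extension equation for $u=e^{|x|^2/2}v$. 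The main technical obstacle will be justifying the differentiation under the integral and the integration by parts on $\Ha$; this rests on uniform $L^1$-control of $\Phi_{s,\rho}$ and its $(z,t)$- and $\rho$-derivatives, which is straightforward from the explicit Riesz-potential form of $\Phi_{s,\rho}$ provided $s>0$.
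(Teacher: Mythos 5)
Your verification of the PDE part is essentially the paper's own argument: both rest on the intertwining $X_g\pi(g)^{\ast}=-d\pi(X)\,\pi(g)^{\ast}$ for left-invariant $X$ (hence $\mathcal{L}_g\mapsto H_x$ and $\partial_t^2\mapsto -1$ under the integral) combined with the extension equation satisfied by $\rho^{2s}\Phi_{s,\rho}$ on $\Ha$, and your sign bookkeeping for the cancellation of the $H$-terms and the $\tfrac14\rho^2$-terms is correct. The only genuine divergence is in the initial condition. The paper rescales $(z,t)\mapsto(\rho z,\rho^2 t)$, which by homogeneity of $\Phi_{s,\rho}$ rewrites $v(x,\rho)$ as $\int_{\Ha}\Phi_{s,1}(z,t)\,\pi(\rho z,\rho^2 t)^{\ast}M_\gamma f(x)\,dz\,dt$, and then concludes from the strong continuity of $\pi$ together with $\int_{\Ha}\Phi_{s,1}=1$ — a clean approximate-identity argument requiring no spectral information. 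Your alternative, diagonalizing $\rho^{2s}\pi(\Phi_{s,\rho})$ on the Hermite eigenspaces, also works: the scalar on the $k$-th eigenspace is $\frac{2^{-s}}{\Gamma(s)}\rho^{2s}L\left(\tfrac14\rho^2,\tfrac{2k+n+1+s}{2},\tfrac{2k+n+1-s}{2}\right)$ (this is the Cowling--Haagerup computation carried out in Section 2 of the paper), and each such scalar tends to $1$ as $\rho\to 0$. But to upgrade eigenspace-wise convergence to strong convergence on all of $L^2(\R^n)$ you must also check that these scalars are bounded uniformly in $k$ and $\rho$; the bound $L(\lambda,a,a-s)\le e^{-\lambda}(2\lambda)^{-s}\Gamma(s)$ gives it, but that is an extra step which the paper's scaling argument avoids.
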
 
  	\begin{proof} For any $ X $ from the Heisenberg Lie algebra $ \mathfrak{h}^n $ viewed as a left invariant vector field on $ \Ha$ we can easily check that
  		$$ \pi(X) \int_{\Ha} \varphi(g) \pi_(g)^\ast f(x) dg = -\int_{\Ha}  X\varphi(g) \pi(g)^\ast f(x) dg.$$ This leads to 
  		$$  H \int_{\Ha} \varphi(g) \pi_\lambda(g)^\ast f(x) dg = \int_{\Ha} \mathcal{L}\varphi(g) \pi_\lambda(g)^\ast f(x) dg$$
  		and consequently, as $$  \rho^{2s} \mathcal{L} \Phi_{s,\rho}(g) = 
  		\Big(  \partial_\rho^2 + \frac{1-2s}{\rho} \partial_\rho + \frac{1}{4}\rho^2 \partial_t^2 \Big) \rho^{2s} \Phi_{s,\rho}(g) = 0 $$
  		we obtain
  		$$ \Big(  -H + \partial_\rho^2 + \frac{1-2s}{\rho} \partial_\rho - \frac{1}{4}\rho^2  \Big) \Big( \rho^{2s} \int_{\mathbb{H}^n}  \Phi_{s,\rho}(g) \pi(g)^\ast f(x)  dg \Big)  = 0. $$
  		In order to check that $ v(x,\rho) $ satisfies the initial condition, we make the change of variables $ (z,t) \rightarrow (\rho z,\rho^2 t) $ so that
  		$$ v(x,\rho)  =    \int_{\mathbb{H}^n}  \Phi_{s,1}(z,t) \pi(\rho z,\rho^2 t)^\ast M_\gamma f(x)  dz\,dt .$$
  		Since $ \pi(\rho z,\rho^2 t) M_\gamma f$ converges to $ M_\gamma f $ in $ L^2(\R^n) $ we obtain $ v(x,\rho) \rightarrow M_\gamma f $ as $ \rho \rightarrow 0 $ in view of $ \int_{\Ha} \Phi_{s,1}(g) dg =1.$
  		This completes the proof of the theorem.
  \end{proof} 
  
    There is yet another convenient way of representing the solution of the extension problem for $ L.$  If we  let  $k_{t,s}(\rho)=(\sinh t)^{-s-1}e^{-\frac{1}{4}(\coth t)\rho^2},$  then it is known that this function satisfies the equation
  	$$  \partial_tk_{t,s}(\rho)=\left(\partial_\rho^2+\frac{1+2s}{\rho}\partial_{\rho}-\frac{1}{4}\rho^2\right)k_{t,s}(\rho).$$ 
   \begin{thm}
  		\label{epsolth}
  		For $f\in L^p(\gamma)$, $1\le p\le \infty$, a solution of the extension problem for $ L $  is given by
  		\begin{equation}
  		\label{epsol}
  		u(x,\rho)=\frac{4^{-s}}{\Gamma(s)}\rho^{2s}\int_{0}^{\infty}  k_{t,s}(\rho) e^{-tL}f(x ) dt .
  		\end{equation}
  		Moreover, as $\rho\rightarrow0$, the solution $u(.,\rho)$ converges to $f$ in $L^p(\gamma)$ for any $1\le p<\infty.$  
  \end{thm}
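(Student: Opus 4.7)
The plan is to split the argument into (i) verifying that $u$ satisfies the extension PDE by conjugating the equation via a factor of $\rho^{2s}$, under which the PDE for $u$ becomes a ``conjugate'' equation for which $k_{t,s}(\rho)$ serves as a heat-type kernel; and (ii) verifying the boundary condition $u(\cdot,\rho)\to f$ in $L^p(\gamma)$ via the change of variables $t=\rho^2/(4\tau)$ and dominated convergence.

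\emph{Step (i): the PDE.} Set $v(x,\rho):=\int_0^\infty k_{t,s}(\rho)\,e^{-tL}f(x)\,dt$, so that $u=\frac{4^{-s}}{\Gamma(s)}\rho^{2s}v$. A direct Leibniz computation yields the identity
\[
\Bigl(-L+\partial_\rho^2+\frac{1-2s}{\rho}\partial_\rho-\tfrac{1}{4}\rho^2\Bigr)(\rho^{2s}v)=\rho^{2s}\Bigl(-L+\partial_\rho^2+\frac{1+2s}{\rho}\partial_\rho-\tfrac{1}{4}\rho^2\Bigr)v,
\]
reducing the task to showing that $v$ solves the right-hand equation. Using the identity $\partial_t k_{t,s}=\bigl(\partial_\rho^2+\frac{1+2s}{\rho}\partial_\rho-\frac{1}{4}\rho^2\bigr)k_{t,s}$ stated in the excerpt and differentiating under the integral sign---justified by the pointwise bounds $k_{t,s}(\rho)\le t^{-s-1}e^{-\rho^2/(4t)}$ near $t=0$ and $k_{t,s}(\rho)\lesssim e^{-(s+1)t}$ at infinity, together with $L^p(\gamma)$-contractivity of $e^{-tL}$---I obtain
\[
\Bigl(\partial_\rho^2+\frac{1+2s}{\rho}\partial_\rho-\tfrac{1}{4}\rho^2\Bigr)v(x,\rho)=\int_0^\infty \partial_t k_{t,s}(\rho)\,e^{-tL}f(x)\,dt.
\]
Integrating by parts in $t$ using $\partial_t e^{-tL}f=-L\,e^{-tL}f$, and observing that the boundary terms $k_{t,s}(\rho)\,e^{-tL}f(x)$ vanish at $t=0^+$ (from the factor $e^{-\rho^2/(4t)}$) and at $t=\infty$ (from the decay of $(\sinh t)^{-s-1}$), produces $Lv$ on the right, giving the conjugate equation.

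\emph{Step (ii): the boundary condition.} The substitution $t=\rho^2/(4\tau)$ yields
\[
u(x,\rho) = \int_0^\infty \Phi_\rho(\tau)\,e^{-(\rho^2/(4\tau))L}f(x)\,d\tau, \qquad \Phi_\rho(\tau):=\frac{\rho^{2s+2}}{4^{s+1}\Gamma(s)\,\tau^2}\bigl(\sinh\tfrac{\rho^2}{4\tau}\bigr)^{-s-1}\exp\!\bigl(-\tfrac{\rho^2}{4}\coth\tfrac{\rho^2}{4\tau}\bigr).
\]
The asymptotics $\sinh t\sim t$ and $t\coth t\to 1$ as $t\to 0^+$ give $\Phi_\rho(\tau)\to \Gamma(s)^{-1}\tau^{s-1}e^{-\tau}$ pointwise in $\tau$ as $\rho\to 0$, while the elementary inequalities $\sinh t\ge t$ and $\coth t\ge 1/t$ for $t>0$ yield the $\rho$-independent dominant $\Phi_\rho(\tau)\le \Gamma(s)^{-1}\tau^{s-1}e^{-\tau}$. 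Combined with $\|e^{-tL}f\|_{L^p(\gamma)}\le\|f\|_{L^p(\gamma)}$ and the strong continuity $e^{-tL}f\to f$ in $L^p(\gamma)$ as $t\to 0$ for $1\le p<\infty$, dominated convergence for Bochner integrals gives $u(\cdot,\rho)\to \Gamma(s)^{-1}\int_0^\infty \tau^{s-1}e^{-\tau}\,d\tau\cdot f = f$ in $L^p(\gamma)$.

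The main technical point is the uniform-in-$\rho$ domination of $\Phi_\rho$ by an integrable function of $\tau$, needed so that Bochner-valued dominated convergence applies. Fortunately the elementary inequalities $\sinh t\ge t$ and $\coth t\ge 1/t$ produce exactly the right envelope after the substitution $t=\rho^2/(4\tau)$; all the remaining interchanges (derivatives with integrals, $L$ with integrals) are routine given the exponential decay of $k_{t,s}(\rho)$ at both endpoints.
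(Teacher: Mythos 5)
Your proposal is correct, and Step (i) coincides with the paper's argument: the same conjugation identity $\bigl(\partial_\rho^2+\frac{1-2s}{\rho}\partial_\rho-\frac14\rho^2\bigr)(\rho^{2s}v)=\rho^{2s}\bigl(\partial_\rho^2+\frac{1+2s}{\rho}\partial_\rho-\frac14\rho^2\bigr)v$, the same use of $\partial_t k_{t,s}=\bigl(\partial_\rho^2+\frac{1+2s}{\rho}\partial_\rho-\frac14\rho^2\bigr)k_{t,s}$, and the same integration by parts in $t$ against $\partial_t e^{-tL}f=-Le^{-tL}f$ with vanishing boundary terms. Where you genuinely diverge is Step (ii). The paper rescales $t\to\rho^2 t$, obtains the pointwise limit $\rho^{2s+2}k_{\rho^2t,s}(\rho)\to t^{-s-1}e^{-1/(4t)}$, and then has to work: it introduces an auxiliary majorant $h_\rho$, splits the integral at a threshold $M$, and invokes a generalized dominated convergence theorem to handle the $\rho$-dependent bound. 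Your substitution $t=\rho^2/(4\tau)$ turns the two elementary inequalities $\sinh t\ge t$ and $t\coth t\ge 1$ directly into the $\rho$-independent envelope $\Phi_\rho(\tau)\le \Gamma(s)^{-1}\tau^{s-1}e^{-\tau}$, with the same function appearing as the pointwise limit, so ordinary Bochner-valued dominated convergence finishes the proof in one stroke; this buys a visibly shorter and cleaner argument and also makes the normalization $\int_0^\infty\tau^{s-1}e^{-\tau}\,d\tau=\Gamma(s)$ transparent (the paper's equivalent fact is $\int_0^\infty t^{-s-1}e^{-1/(4t)}\,dt=4^s\Gamma(s)$). The only points you wave at rather than prove -- differentiation under the integral sign in $\rho$ and pulling the unbounded operator $L$ through the integral -- are treated with the same brevity in the paper, so nothing is lost relative to the published proof.
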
 
  \begin{proof} That $ u $ solves the extension problem follows easily from the fact that $ e^{-tL}f(x )$ solves the heat equation associated to $L$, i.e., $ -L e^{-tL}f(x) =\partial_t e^{-tL}f(x)$ and the definition of $ k_{t,s}(\rho).$ Indeed we have 
  	$$ -Lu(x,\rho)=\frac{4^{-s}}{\Gamma(s)}\rho^{2s}\int_{0}^{\infty}  k_{t,s}(\rho) \partial_tv(x,t) dt $$  which after an integration by parts in the $t$ variable yields
  	$$ Lu(x,\rho)=\frac{4^{-s}}{\Gamma(s)}\rho^{2s}\int_{0}^{\infty}  \partial_t k_{t,s}(\rho) v(x,t) dt.$$  Since $ k_{t,s}(\rho)$ is  the heat kernel associated to the operator $ \big(\partial_\rho^2+\frac{1+2s}{\rho}\partial_{\rho}-\frac{1}{4}\rho^2 \big)$ we have 
  	$$Lu(x,\rho)=\frac{4^{-s}}{\Gamma(s)}\rho^{2s}\left(\partial_\rho^2+\frac{1+2s}{\rho}\partial_{\rho}-\frac{1}{4}\rho^2\right)\int_{0}^{\infty}   k_{t,s}(\rho) e^{-tL}f(x) dt.$$ Finally, an easy calculation shows that for any function $ v(\rho),$	 one has
  	$$ \left(\partial_\rho^2+\frac{1-2s}{\rho}\partial_{\rho}-\frac{1}{4}\rho^2\right)(\rho^{2s} v(\rho))=\rho^{2s}\left(\partial_\rho^2+\frac{1+2s}{\rho}\partial_{\rho}-\frac{1}{4}\rho^2\right) v(\rho)$$ and hence it follows that $ u (x,\rho) $ solves the extension problem.
  	
  	Now to prove the $L^p(\gamma)$ convergence of the solution to the initial condition, we make use of the fact that $ e^{-tL} $ is a contraction semigroup on every $ L^p(\gamma) $ and $ e^{-tL}f $ converges to $ f $ as $ t \rightarrow 0 $ in $ L^p(\gamma).$ We first make a change of variables $t\rightarrow \rho^2t$ to get 
  	\[u(x,\rho)=\frac{4^{-s}}{\Gamma(s)}\rho^{2s+2}\int_{0}^{\infty}  k_{\rho^2t,s}(\rho) e^{-\rho^2tL}f(x ) dt.\]
  	Note that       
  	\begin{align}
  	\label{limit}
  	\rho^{2s+2}k_{\rho^2t,s}(\rho)&= \rho^{2s+2}(\sinh \rho^2t)^{-s-1}e^{-\frac{1}{4}(\coth \rho^2t)\rho^2} \nonumber\\
  	&= t^{-s-1}\left(\frac{\rho^2t}{\sinh \rho^2t}\right)^{s+1}e^{-\frac{1}{4t}(\coth \rho^2t)\rho^2t}\nonumber\\
  	& \rightarrow t^{-s-1}e^{-\frac{1}{4t}} ~~\text{as} ~~\rho  \rightarrow 0. 
  	\end{align} 
  	Here we have used the facts that as $y\rightarrow 0$, $\frac{\sinh y}{y}\rightarrow 1$ and $y\coth y\rightarrow 1.$
  	Also we see that $t^{-s-1}e^{-\frac{1}{4t}}\in L^1(0,\infty)$ and an easy calculation yields 
  	\begin{align*}
  	\int_{0}^{\infty }t^{-s-1}e^{-\frac{1}{4t}}dt=4^s\Gamma(s).
  	\end{align*}
  	Now using this result we can write for any $x\in\mathbb{R}^n$
  	\begin{align*}
  	u(x,\rho)-f(x)&=\frac{4^{-s}}{\Gamma(s)}\rho^{2s+2}\int_{0}^{\infty}  k_{\rho^2t,s}(\rho) e^{-\rho^2tL}f(x)~ dt-\frac{4^{-s}}{\Gamma(s)}\int_{0}^{\infty }t^{-s-1}e^{-\frac{1}{4t}}dtf(x)\\
  	&= \frac{4^{-s}}{\Gamma(s)}\int_{0}^{\infty}\left(\rho^{2s+2}k_{\rho^2t,s}(\rho)-t^{-s-1}e^{-\frac{1}{4t}}\right) e^{-\rho^2tL}f(x)dt\\& \hspace*{1cm}+\frac{4^{-s}}{\Gamma(s)}\int_{0}^{\infty}t^{-s-1}e^{-\frac{1}{4t}} \left( e^{-\rho^2tL}f(x)-f(x)\right)dt
  	\end{align*}
  	Therefore, using Minkowski's integral inequality and the fact that $\|e^{-\rho^2tL}f\|_{L^p(\gamma)}\le  \|f\|_{L^p(\gamma)}$ we have
  	\begin{align}
  	\label{lpsol}
  	\|u(.,\rho)-f\|_{L^p(\gamma)}&\le \frac{4^{-s}}{\Gamma(s)}\int_{0}^{\infty}\left| \rho^{2s+2}k_{\rho^2t,s}(\rho)-t^{-s-1}e^{-\frac{1}{4t}}\right| \|f\|_{L^p(\gamma)}dt \\ &\hspace*{1cm}+\frac{4^{-s}}{\Gamma(s)}\int_{0}^{\infty}t^{-s-1}e^{-\frac{1}{4t}} \|e^{-\rho^2tL}f-f\|_{L^p(\gamma)}dt
  	\end{align}
  	Note that using the asymptotics of sine and cotangent hyperbolic functions we have  $$|\rho^{2s+2}k_{\rho^2t,s}(\rho)-t^{-s-1}e^{-\frac{1}{4t}}|\leq C \rho^{2s+2}e^{-\rho^2t(s+1)}+t^{-s-1}e^{-\frac{1}{4t}}:=h_{\rho}(t), t>M$$ It is not hard to see that for every $\rho>0$, $h_{\rho}\in L^1 $ and $\lim_{\rho\rightarrow0}\int_{M}^{\infty}h_{\rho}(t)dt=\int_{M}^{\infty}h(t)dt  $ where as $\rho\rightarrow0$, $h_{\rho}(t)\rightarrow t^{-s-1}e^{-\frac{1}{4t}}=:h(t)$ pointwise. Hence by generalised dominated convergence theorem we have   \[\int_{M}^{\infty}\left| \rho^{2s+2}k_{\rho^2t,s}(\rho)-t^{-s-1}e^{-\frac{1}{4t}}\right| \|f\|_{L^p(\gamma)}dt\rightarrow 0~ as~ \rho\rightarrow0.\]  Now see that similarly as in \ref{limit}, one can show that  as $t\rightarrow0$ the function $h_{\rho}(t)$ goes to a finite limit. So there is no singularity of $h_{\rho}$ at $0$. Hence it is easy to see that $$\int_{0}^{M}\left| \rho^{2s+2}k_{\rho^2t,s}(\rho)-t^{-s-1}e^{-\frac{1}{4t}}\right| \|f\|_{L^p(\gamma)}dt$$ goes to zero as $\rho\rightarrow0.$
  	Hence it follows that the first integral in the RHS of \ref{lpsol}  goes to zero. Also the integrand of second integral is bounded above by an integrable function of $t$. Indeed,
  	\[t^{-s-1}e^{-\frac{1}{4t}} \|e^{-\rho^2tL}f-f\|_{L^p(\gamma)}\le 2t^{-s-1}e^{-\frac{1}{4t}} \|f\|_{L^p(\gamma)}.\] Hence by DCT the second integral goes to zero as $\rho\rightarrow0.$
  	Therefore, we have 
  	\begin{align*}
  	u(.,\rho)=\frac{4^{-s}}{\Gamma(s)}\rho^{2s+2}\int_{0}^{\infty}  k_{\rho^2t,s}(\rho) e^{-\rho^2tL}f~ dt \rightarrow f ~~ \text{in}~~ L^p(\gamma) ~~\text{as}~~ \rho\rightarrow 0.  
  	\end{align*}
  	This completes the proof.	 
  \end{proof}	
  
    We have thus given two representations for solutions of the extension problem but they are the same. This is not obvious and  needs a proof.  It is convenient to work with the functions
  	$$ \varphi_{s,\delta}(z,a) = ((\delta+\frac{1}{4}|z|^2)^2+a^2)^{-\frac{n+1+s}{2}}$$ 
  	in terms of which we can express $ \Phi_{s,\rho} $ as follows:  with $ \delta = \frac{1}{4}\rho^2,$
  	\begin{equation}  \Phi_{s,\rho}(z,a) = \frac{2^{-(n+1+s)} }{ \pi^{n+1}\Gamma(s)}  \Gamma\left(\frac{n+1+s}{2}\right)^2 \varphi_{s,\delta}(z,a).
  	\end{equation}
  	For a function $ \varphi(z,t) $ on $ \Ha $ we let $ \varphi^\lambda(z) $ to stand for the inverse Fourier transform of $ \varphi $ in the $ t $ variable. Thus
  	$$  \varphi_{s,\delta}^\lambda(z) = \int_{-\infty}^\infty \varphi_{s,\delta}(z,t) e^{i\lambda t} dt.$$
  	This is  a radial function on $ \Ca $ and hence has an expansion in terms of the Laguerre functions
  	\begin{equation}
  	\label{lagu}
  	  \varphi_k^\lambda(z) = L_k^{n-1}(\frac{1}{2}|\lambda| |z|^2) e^{-\frac{1}{4}|\lambda| |z|^2}.
  	\end{equation}
      We let $ c_{k,\delta}^\lambda(s) $ to be the coefficients defined by
  	\begin{equation}  \varphi_{s,\delta}^\lambda(z) = (2\pi)^{-n} |\lambda|^n \sum_{k=0}^\infty  c_{k,\delta}^\lambda(s) \varphi_k^\lambda(z) .\end{equation}
  	These coefficients are given in terms of the auxiliary function $ L(a,b,c) $ defined for $ a, b \in \R_+ $ and $ c \in \R$ as follows:
  	\begin{equation}
  	L(a,b,c) = \int_0^\infty e^{-a(2x+1)}x^{b-1}(1+x)^{-c} dx.
  	\end{equation}
  	The following proposition expresses $ c_{k,\delta}^\lambda(s)$ in terms of $ L,$   see \cite{CH}.
  	\begin{prop} [Cowling-Haagerup] For any $ \delta > 0 $ and  $ 0 < s <  \frac{n+1}{2} $ we have
  		$$  c_{k,\delta}^\lambda(s) = \frac{(2\pi)^{n+1}|\lambda|^s}{ \Gamma(\frac{n+1+s}{2})^2} L\left(\delta |\lambda|, \frac{2k+n+1+s}{2}, \frac{2k+n+1-s}{2}\right).$$
  	\end{prop}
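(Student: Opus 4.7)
The plan is to compute $\varphi_{s,\delta}^\lambda(z)$ directly and then read off the Laguerre coefficients by using the generating identity for the Laguerre functions. With $\nu=(n+1+s)/2$ and $A=\delta+\tfrac14|z|^2$, I write $\varphi_{s,\delta}(z,a)=(A^2+a^2)^{-\nu}$ and use the Mellin representation
$$(A\mp ia)^{-\nu}=\frac{1}{\Gamma(\nu)}\int_0^\infty x^{\nu-1}e^{-(A\mp ia)x}\,dx\qquad(A>0),$$
together with $(A^2+a^2)^{-\nu}=(A-ia)^{-\nu}(A+ia)^{-\nu}$, to express $\varphi_{s,\delta}(z,a)$ as a double integral. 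Multiplying by $e^{i\lambda a}$ and integrating in $a$ produces a factor $2\pi\,\delta(x-y+\lambda)$, which collapses the double integral to one dimension; after taking the delta into account and (by symmetry in $\lambda$) writing everything in terms of $|\lambda|$, one obtains
$$\varphi_{s,\delta}^\lambda(z)=\frac{2\pi}{\Gamma(\nu)^2}\,e^{-A|\lambda|}\int_0^\infty x^{\nu-1}(x+|\lambda|)^{\nu-1}e^{-2Ax}\,dx.$$

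The next step is to rescale $x=|\lambda|y$ and separate the $\delta$-dependence from the $|z|^2$-dependence:
$$\varphi_{s,\delta}^\lambda(z)=\frac{2\pi|\lambda|^{2\nu-1}}{\Gamma(\nu)^2}\,e^{-\delta|\lambda|}\,e^{-\tfrac14|\lambda||z|^2}\int_0^\infty y^{\nu-1}(1+y)^{\nu-1}e^{-2\delta|\lambda|y}\,e^{-\tfrac12|\lambda||z|^2\,y}\,dy.$$
Here the exponential $e^{-\tfrac12|\lambda||z|^2\,y}$ is the one carrying all of the $z$-dependence that still needs to be expanded. I now use the generating function $\sum_k L_k^{n-1}(u)t^k=(1-t)^{-n}e^{-tu/(1-t)}$ with $t=y/(1+y)$, equivalently
$$e^{-yu}=(1+y)^{-n}\sum_{k=0}^\infty L_k^{n-1}(u)\Bigl(\tfrac{y}{1+y}\Bigr)^k,$$
applied with $u=\tfrac12|\lambda||z|^2$. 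Interchanging sum and integral gives
$$\varphi_{s,\delta}^\lambda(z)=\frac{2\pi|\lambda|^{2\nu-1}}{\Gamma(\nu)^2}\,e^{-\delta|\lambda|}\,e^{-\tfrac14|\lambda||z|^2}\sum_{k=0}^\infty L_k^{n-1}\bigl(\tfrac12|\lambda||z|^2\bigr)\int_0^\infty y^{\nu-1+k}(1+y)^{\nu-1-n-k}e^{-2\delta|\lambda|y}\,dy.$$

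Comparing with the expansion $\varphi_{s,\delta}^\lambda(z)=(2\pi)^{-n}|\lambda|^n\sum_k c_{k,\delta}^\lambda(s)\varphi_k^\lambda(z)$ and using $\varphi_k^\lambda(z)=L_k^{n-1}(\tfrac12|\lambda||z|^2)e^{-\tfrac14|\lambda||z|^2}$ together with $2\nu-1-n=s$, $\nu-1+k=\tfrac{2k+n+1+s}{2}-1$ and $-(\nu-1-n-k)=\tfrac{2k+n+1-s}{2}$, one identifies
$$e^{-\delta|\lambda|}\int_0^\infty y^{b-1}(1+y)^{-c}e^{-2\delta|\lambda|y}\,dy=\int_0^\infty y^{b-1}(1+y)^{-c}e^{-\delta|\lambda|(2y+1)}\,dy=L(\delta|\lambda|,b,c)$$
with $b=(2k+n+1+s)/2$ and $c=(2k+n+1-s)/2$, yielding exactly the claimed formula for $c_{k,\delta}^\lambda(s)$.

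The main obstacle will be the rigorous justification of the term-by-term integration in the Laguerre expansion (Step 3), since the generating function identity is only guaranteed to converge for $|t|<1$ and the integrals in $y$ range over all of $(0,\infty)$. To handle this I would first truncate the $y$-integral to $[0,R]$ (where the series converges uniformly in $y$ by the explicit bound $|t|=y/(1+y)<R/(1+R)$), interchange freely there, and then pass to the limit $R\to\infty$ using the convergence of the unsmoothed integrals (which are beta-type and absolutely convergent since $c-b=-s>-1$ and the factor $e^{-2\delta|\lambda|y}$ provides decay uniformly in $k$). Convergence of the resulting Laguerre series in the sense of tempered distributions, which is all that is needed for the identification of coefficients, follows from polynomial bounds on $L_k^{n-1}$ on compact sets.
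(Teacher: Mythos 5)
Your computation is correct, and it is worth noting that the paper offers no proof of this proposition at all: it is stated with a bare citation to Cowling--Haagerup, so your argument is a genuinely self-contained derivation rather than a variant of something in the text. The route you take --- writing $(A^2+a^2)^{-\nu}=(A-ia)^{-\nu}(A+ia)^{-\nu}$ with $A=\delta+\tfrac14|z|^2$, $\nu=\tfrac{n+1+s}{2}$, inserting the Gamma-integral representation, collapsing the $a$-integral, and then expanding $e^{-\frac12|\lambda||z|^2 y}$ via the Laguerre generating function with $t=y/(1+y)$ --- does reproduce the stated constant and the $L$-function exactly (the exponents check out: $2\nu-1-n=s$, $\nu+k-1=\tfrac{2k+n+1+s}{2}-1$, $n+k+1-\nu=\tfrac{2k+n+1-s}{2}$, and $e^{-\delta|\lambda|}\int_0^\infty y^{b-1}(1+y)^{-c}e^{-2\delta|\lambda|y}\,dy=L(\delta|\lambda|,b,c)$). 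Two refinements would tighten the write-up. First, the ``$2\pi\,\delta(x-y+\lambda)$'' step is cleanest phrased as the convolution theorem: $(A\pm ia)^{-\nu}$ are Fourier transforms of the $L^1$ functions $\Gamma(\nu)^{-1}x_{\pm}^{\nu-1}e^{-A|x|}$, so $(A^2+a^2)^{-\nu}$ is the transform of their convolution, which is precisely your one-dimensional integral evaluated at $\lambda$; no distributional manipulation is then needed. Second, for the term-by-term integration you do not really need the truncation device: since $|L_k^{n-1}(u)|\le e^{u/2}\binom{k+n-1}{k}$ grows only polynomially in $k$ for fixed $u$, while $L\left(\delta|\lambda|,\tfrac{2k+n+1+s}{2},\tfrac{2k+n+1-s}{2}\right)$ decays in $k$ (essentially like $e^{-c\sqrt{k}}$, as the paper's own Whittaker asymptotics in Lemma \ref{Walarge} show), the doubly indexed family is absolutely summable--integrable and Fubini applies directly. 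With those two points made explicit, the proof is complete.
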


    Using  this proposition  we can compute the explicit formula for the group Fourier transform of $ \Phi_{s,\rho}(g)$ on $ \Ha.$ Let $ P_k(\lambda) $ stand for the projections associated to $ H(\lambda) = -\Delta+\lambda^2 |x|^2$. Then making use of the fact that 
  	$$  \int_{\Ca} \varphi_k^\lambda(z)  \pi_\lambda(z,0) dz  = (2\pi)^{-n} |\lambda|^{-n} P_k(\lambda) $$ we obtain the  following formula: with $ \delta = \frac{1}{4}\rho^2,$ as before,
  	$$  \int_{\Ha} \Phi_{s,\rho}(g) \pi_\lambda(g)^\ast  dg =  \frac{2^{-(n+1+s)} }{ \pi^{n+1}\Gamma(s)}  \Gamma(\frac{n+1+s}{2})^2  \sum_{k=0}^\infty  c_{k,\delta}^\lambda(s) P_k(\lambda). $$
  	As the projections associated to $ L $ are given by $ Q_k = M_\gamma^{-1} P_k M_\gamma $ we see that the solution defined in  Theorem 2.1 is given by 
  	$$ u(x,\rho) =  \frac{2^{-(n+1+s)} }{ \pi^{n+1}\Gamma(s)}  \Gamma(\frac{n+1+s}{2})^2    \rho^{2s} \sum_{k=0}^\infty  c_{k,\delta}^1(s) Q_k f(x) .$$ 
  	Therefore, in order to prove our claim, we only need check if 
  	$$\frac{4^{-s}}{\Gamma(s)}\rho^{2s}\int_{0}^{\infty}  k_{t,s}(\rho) e^{-tL}f(x ) dt = \frac{2^{-(n+1+s)} }{ \pi^{n+1}\Gamma(s)}  \Gamma(\frac{n+1+s}{2})^2    \rho^{2s} \sum_{k=0}^\infty  c_{k,\delta}^1(s) Q_k f(x) $$
  	where $ \delta = \frac{1}{4} \rho^2.$ Equivalently, we need to check if 
  	$$ \int_0^\infty  k_{t,s}(\rho) e^{-t(2k+n)} dt = L\left(\frac14 \rho^2, \frac{2k+n+1+s}{2}, \frac{2k+n+1-s}{2}\right).$$ 
  
    In order to compute the above integral, we make the change of variables  $\coth t =2z+1$ and note that $-(\sinh^2 t)^{-1} \, dt=2dz $ and $\sinh t=(2z \, (2z+2))^{-\frac{1}{2}}.$ We get
  	\begin{eqnarray*}
  		&& \int_{0}^{\infty}  (\sinh t)^{-s-1} \, e^{-\frac{1}{4}(\coth t) \rho^2} \, e^{-t(2k+n)} \, dt\\
  		&&=2\int_{0}^{\infty}  (2z(2z+2))^{\frac{1}{2}(s-1)} \, e^{-\frac{1}{4}(2z+1) \rho^2} \, \left(\frac{2z+2}{2z}\right)^{-\frac{1}{2}(2k+n)} \, dz\\
  		&&= 2\int_{0}^{\infty}  \, e^{-\frac{1}{4}(2z+1) \rho^2} \, (2z)^{\frac{1}{2}[(s-1)+(2k+n)]}  (2z+2)^{-\frac{1}{2}[(1-s)+(2k+n)]} \, dz\\
  		&&= 2^s\int_{0}^{\infty}  \, e^{-\frac{1}{4}(2z+1) \rho^2} \, (z)^{\frac{1}{2}[(s+1)+(2k+n)]-1}  (z+1)^{-\frac{1}{2}[(1-s)+(2k+n)]} \, dz\\
  		&&=2^s \, L\left(\frac{1}{4} \, \rho^2, \frac{2k+n+1+s}{2},\frac{2k+n+1-s}{2}\right).
  	\end{eqnarray*}
  	This proves our claim that Theorems \ref{thm21} and \ref{epsolth} define the same solution of the extension problem. 
  
  The above proof also shows that the function $ u(x,\rho) $ defined  by the integral
  $$  u(x,\rho)=\frac{4^{-s}}{\Gamma(s)}\rho^{2s}\int_{0}^{\infty}  k_{t,s}(\rho) e^{-tU}f(x ) dt $$ 
  using $ U $ in place of $ L $ solves the extension problem for $ U$ 
  and  the following expansion for the solution $ u $ is valid.
  
  \begin{prop} \label{prop2.4} For $ 0 < s < \frac{n+1}{2} $ and $ f \in L^2(\gamma) $ the solution of the extension problem associated to $ U $ is given by
  	\begin{equation}
  	u(.,\rho)=	 \frac{2^{-s}}{\Gamma(s)} \, \rho^{2s} \, \sum_{k=0}^\infty  L\left(\frac{1}{4} \, \rho^2,\frac{k+n/2+1+s}{2},\frac{k+n/2+1-s}{2}\right) \, Q_k f.
  	\end{equation}
  \end{prop}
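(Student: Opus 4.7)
The plan is to piece together two facts already developed in the excerpt. First, the integral formula
\[
u(x,\rho)=\frac{4^{-s}}{\Gamma(s)}\rho^{2s}\int_{0}^{\infty} k_{t,s}(\rho)\, e^{-tU}f(x)\, dt
\]
solves the $U$-extension problem: this is explicit in the paragraph following the proof that Theorems \ref{thm21} and \ref{epsolth} agree, and the proof is identical to Theorem \ref{epsolth} with $L$ replaced by $U=\tfrac{1}{2}L$. Second, the scalar Mellin-type integral
\[
\int_{0}^{\infty}(\sinh t)^{-s-1}\, e^{-\frac{1}{4}(\coth t)\rho^{2}}\, e^{-t\alpha}\, dt
=2^{s}\, L\!\left(\tfrac{1}{4}\rho^{2},\tfrac{\alpha+(1+s)/2 \cdot \text{shift}}{\cdots}\right)
\]
was computed in the excerpt for $\alpha=2k+n$ via the substitution $\coth t=2z+1$.

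First, I would use the spectral decomposition $L=\sum_{k\ge 0}(2k+n)Q_{k}$, hence $U=\sum_{k\ge 0}(k+n/2)Q_{k}$, to write $e^{-tU}f=\sum_{k\ge 0}e^{-t(k+n/2)}Q_{k}f$ in $L^{2}(\gamma)$. Substituting this into the integral representation and interchanging the sum with the integral gives
\[
u(x,\rho)=\frac{4^{-s}}{\Gamma(s)}\rho^{2s}\sum_{k=0}^{\infty}\left(\int_{0}^{\infty} k_{t,s}(\rho)\, e^{-t(k+n/2)}\, dt\right)Q_{k}f(x).
\]
The interchange is justified by Fubini because $k_{t,s}(\rho)e^{-t(k+n/2)}\ge 0$ and the partial sums converge in $L^{2}(\gamma)$ uniformly in $t$ on any compact subinterval, with tail bounds coming from the exponential factor $e^{-t(k+n/2)}$ and the fact that $\sum_{k}\|Q_{k}f\|_{2}^{2}<\infty$.

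Next, I would compute the scalar integral in the brackets by repeating verbatim the substitution $\coth t=2z+1$ carried out in the verification that Theorems \ref{thm21} and \ref{epsolth} coincide. The only change is that the exponent $(2k+n)$ there is replaced by $(k+n/2)$, so the identical chain of manipulations yields
\[
\int_{0}^{\infty}(\sinh t)^{-s-1}\, e^{-\frac{1}{4}(\coth t)\rho^{2}}\, e^{-t(k+n/2)}\, dt
=2^{s}\, L\!\left(\tfrac{1}{4}\rho^{2},\tfrac{k+n/2+1+s}{2},\tfrac{k+n/2+1-s}{2}\right),
\]
valid for $0<s<(n+1)/2$, since the only place $k$ appears in the reduction is through the $(z+1)/z$ factor raised to the exponent.

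Finally, combining the prefactors $4^{-s}\cdot 2^{s}=2^{-s}$ gives precisely the claimed expansion. The main obstacle, modest as it is, is the bookkeeping in the Fubini step together with confirming that no feature of the previous substitution depended on $\alpha=2k+n$ being an integer; since the computation used only $\alpha>0$ and the formal identity $e^{-t\alpha}=((2z+2)/(2z))^{-\alpha/2}$, it transports without change to $\alpha=k+n/2$.
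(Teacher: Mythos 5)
Your proposal is correct and follows essentially the same route as the paper: the authors obtain Proposition \ref{prop2.4} precisely by remarking that the substitution $\coth t = 2z+1$ used to verify the agreement of Theorems \ref{thm21} and \ref{epsolth} goes through verbatim with the eigenvalue $2k+n$ replaced by $k+n/2$, giving $2^{s}L\bigl(\tfrac14\rho^{2},\tfrac{k+n/2+1+s}{2},\tfrac{k+n/2+1-s}{2}\bigr)$ and hence the prefactor $4^{-s}\cdot 2^{s}=2^{-s}$. Your added care about the Fubini interchange and about the computation depending only on $\alpha>0$ is detail the paper leaves implicit, but it is the same argument.
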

  
    We let $ T_{s,\rho}$ stand for the solution operator which takes $f $ into the solution $ u(x,\rho) $ of the extension problem. Thus
   $$ T_{s,\rho}f(x) = \frac{4^{-s}}{\Gamma(s)}\rho^{2s}\int_{0}^{\infty}  k_{t,s}(\rho) e^{-tL}f(x ) dt$$
   which is also given by the expansion  in the above proposition. In what follows we make use of the transformation property
   \begin{equation} \label{3.7}
   \frac{(2\lambda)^a}{\Gamma(a)} L(\lambda, a, b)=\frac{(2\lambda)^b}{\Gamma(b)} L(\lambda, b, a)
   \end{equation}
   satisfied by the $ L $ function for all admissible values of $ (a,b,c)$, see Cowling-Haagerup \cite{CH}.   
   \subsection{Fractional powers of the operators $ L$ and $ U$}   In what follows let $ A $ stand for either $ L $ or $ U.$ Note that the associated eigenvalues $ \lambda_k $ are given by $ (2k+n) $ and $ (k+n/2) $ respectively. The above representation of the solution of the extension problem allows us to  define $ A_s $ as the Neumann boundary data associated to the extension problem. More precisely we have the following result. 
   
   \begin{thm}  Assume that $0<s<1$. Let $f\in L^2\cap L^p(\gamma)$, $1\le p<\infty$ be  such that $A_sf\in L^p(\gamma)$.  Then the  solution of the extension problem $ u(x,\rho) = T_{s,\rho} f(x) $ satisfies
   	\[\displaystyle\lim_{\rho\rightarrow0} \rho^{1-2s}\partial_\rho u(x,\rho)=-2^{1-2s} \, \frac{\Gamma(1-s)}{\Gamma(s)} \, A_sf 
   	\]  where the convergence is understood in the $L^p(\gamma)$ sense. 
   \end{thm}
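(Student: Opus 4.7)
The plan is to reduce the problem to a pointwise spectral analysis on each eigenspace of $A$ and then upgrade to $L^p(\gamma)$ convergence. Let $\lambda_k$ denote the eigenvalue of $A$ on the range of $Q_k$ and set $a_k=(\lambda_k+1+s)/2$, $b_k=(\lambda_k+1-s)/2$, so that the spectral multiplier of $A_s$ on the $k$-th eigenspace is $\eta_k=2^s\Gamma(a_k)/\Gamma(b_k)$. By Proposition \ref{prop2.4} and its counterpart for $L$ derived just before it, $T_{s,\rho}$ acts on $Q_kf$ as multiplication by $u_k(\rho):=\frac{2^{-s}}{\Gamma(s)}\rho^{2s}L(\rho^2/4,a_k,b_k)$. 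Applying the Cowling--Haagerup functional equation \ref{3.7} converts this to the convenient form
$$u_k(\rho)=\frac{1}{\Gamma(s)}\,\frac{\Gamma(a_k)}{\Gamma(b_k)}\,L(\rho^2/4,b_k,a_k),$$
from which the initial condition $u_k(0)=1$ is transparent via $L(0,b_k,a_k)=\Gamma(b_k)\Gamma(s)/\Gamma(a_k)$.

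Next I would differentiate. With $\mu=\rho^2/4$ one has $\partial_\rho=(\rho/2)\partial_\mu$, and a direct computation (write $2x+1=2(1+x)-1$ inside the integral defining $L$) gives the identity $\partial_\mu L(\mu,b,a)=L(\mu,b,a)-2L(\mu,b,a-1)$. Hence
$$\rho^{1-2s}u_k'(\rho)=\frac{2^{1-2s}\mu^{1-s}}{\Gamma(s)}\frac{\Gamma(a_k)}{\Gamma(b_k)}\bigl[L(\mu,b_k,a_k)-2L(\mu,b_k,a_k-1)\bigr].$$
As $\mu\to 0$ the first bracketed term is annihilated by the prefactor $\mu^{1-s}$ since $L(\mu,b_k,a_k)$ stays finite. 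For the second, the substitution $y=2\mu x$ recasts $L(\mu,b_k,a_k-1)$ as $e^{-\mu}(2\mu)^{s-1}\int_0^\infty e^{-y}y^{-s}(1+2\mu/y)^{1-a_k}\,dy$; since $a_k>1$, the integrand is dominated by $e^{-y}y^{-s}\in L^1(0,\infty)$ uniformly in $k$, so dominated convergence yields the asymptotic $L(\mu,b_k,a_k-1)\sim(2\mu)^{s-1}\Gamma(1-s)$. Collecting the pieces produces the pointwise limit $\rho^{1-2s}u_k'(\rho)\to -c_s\eta_k$ with $c_s=2^{1-2s}\Gamma(1-s)/\Gamma(s)$, which is exactly the $k$-th spectral multiplier of $-c_sA_s$.

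To promote this to $L^p(\gamma)$ convergence, set $n_k(\rho):=\rho^{1-2s}u_k'(\rho)/\eta_k$; the bounds used in the asymptotic above in fact give $\sup_{k,\,0<\rho\le 1}|n_k(\rho)|<\infty$, while $n_k(\rho)\to -c_s$ for each $k$. For $p=2$, Plancherel (using orthogonality of the $Q_kf$'s in $L^2(\gamma)$) reduces the claim to showing $\sum_k(n_k(\rho)+c_s)^2\eta_k^2\|Q_kf\|_2^2\to 0$, which follows from dominated convergence with majorant $\sum_k\eta_k^2\|Q_kf\|_2^2=\|A_sf\|_2^2<\infty$. For $p\neq 2$ one writes $\rho^{1-2s}\partial_\rho T_{s,\rho}=M_\rho\circ A_s$, where $M_\rho$ is the spectral multiplier with symbol $\{n_k(\rho)\}$; on finite linear combinations of $Q_kf$'s the convergence is trivial in every $L^p(\gamma)$, so a density argument (Hermite polynomials are dense in $L^p(\gamma)$ for $1\le p<\infty$) reduces matters to a uniform-in-$\rho$ bound $\|M_\rho\|_{L^p\to L^p}\le C$.

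I expect this uniform $L^p$ boundedness to be the principal obstacle: it does not follow from the sup bound on $\{n_k(\rho)\}$ alone. I would attempt to extract it either from an Ornstein--Uhlenbeck multiplier theorem applied to the symbol $\{n_k(\rho)\}$ or, more concretely, from the integral representation
$$\rho^{1-2s}\partial_\rho u(x,\rho)=\frac{4^{-s}}{\Gamma(s)}\int_0^\infty\bigl[2s-(\rho^2/2)\coth t\bigr]k_{t,s}(\rho)e^{-tA}f(x)\,dt,$$
using Minkowski's integral inequality and the $L^p(\gamma)$-contractivity of $e^{-tA}$ established in the proof of Theorem \ref{epsolth}. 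The subtle point is that $2s\,k_{t,s}(\rho)$ and $(\rho^2/2)(\coth t)k_{t,s}(\rho)$ separately generate non-integrable behaviour near $t=0$; an integration by parts in $t$ that moves $\partial_te^{-tA}=-Ae^{-tA}$ across should expose the cancellation and permit Minkowski to close the estimate.
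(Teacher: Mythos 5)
Your spectral computation is sound: the functional-equation rewriting $u_k(\rho)=\frac{1}{\Gamma(s)}\frac{\Gamma(a_k)}{\Gamma(b_k)}L(\rho^2/4,b_k,a_k)$, the derivative identity $\partial_\mu L(\mu,b,a)=L(\mu,b,a)-2L(\mu,b,a-1)$, and the asymptotics $L(\mu,b_k,a_k-1)\sim(2\mu)^{s-1}\Gamma(1-s)$ are all correct, and together with the uniform bound on $n_k(\rho)$ they do close the case $p=2$ by Plancherel and dominated convergence. But the theorem asserts convergence in $L^p(\gamma)$ for all $1\le p<\infty$, and you have correctly identified, without resolving, the obstruction for $p\neq 2$: a uniform $L^p(\gamma)\to L^p(\gamma)$ bound for the multiplier family $M_\rho$. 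Neither of your proposed escapes is carried out, and the second one starts from the representation $\rho^{1-2s}\partial_\rho u=\frac{4^{-s}}{\Gamma(s)}\int_0^\infty\bigl[2s-(\rho^2/2)\coth t\bigr]k_{t,s}(\rho)e^{-tA}f\,dt$, in which the two terms are each of size $\rho^{-2s}$ as $\rho\to 0$ (since $\int_0^\infty k_{t,s}(\rho)\,dt=2^{s+1}L(\rho^2/4,\tfrac{1+s}{2},\tfrac{1-s}{2})\sim c\,\rho^{-2s}$), so the cancellation still has to be extracted before Minkowski can be applied. As written, the proof is incomplete for $p\neq 2$.

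The missing step is one you already hold at the spectral level. Your identity for $u_k(\rho)$ is precisely the fiberwise form of the operator identity $\rho^{2s}T_{-s,\rho}(A_sf)=\frac{4^s\Gamma(s)}{\Gamma(-s)}T_{s,\rho}f$, which at the level of integral representations reads
\begin{equation*}
u(x,\rho)=\frac{1}{\Gamma(s)}\int_0^\infty(\sinh t)^{s-1}\,e^{-\frac14(\coth t)\rho^2}\,e^{-tA}A_sf(x)\,dt,
\end{equation*}
and this is exactly how the paper proceeds. In this form the only $\rho$-dependence sits in the Gaussian factor, so $\partial_\rho$ brings down the single factor $-\frac{\rho}{2}\coth t$ under the integral, with no competing divergent terms; the prefactor $\rho^{2(1-s)}$ appears automatically. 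After the rescaling $t\mapsto t\rho^2$ the kernel converges to the integrable profile $t^{s-2}e^{-1/(4t)}$, and Minkowski's integral inequality together with the contractivity of $e^{-tA}$ on $L^p(\gamma)$ and the convergence $e^{-t\rho^2A}A_sf\to A_sf$ in $L^p(\gamma)$ yields the limit for every $1\le p<\infty$ --- exactly the dominated-convergence scheme of Theorem \ref{epsolth}, with $\int_0^\infty t^{s-2}e^{-1/(4t)}\,dt=4^{1-s}\Gamma(1-s)$ producing the constant $2^{1-2s}\Gamma(1-s)/\Gamma(s)$. No Ornstein--Uhlenbeck multiplier theorem is needed; transferring your first-paragraph identity from the fibers to the integral representation dissolves the obstacle.
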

   
   \begin{proof} The expansion of $ T_{s,\rho}f $ given in Proposition \ref{prop2.4} and the transformation property \eqref{3.7} of the $ L $ function allows us to verify  the following identity:
   	\begin{equation}
   	\label{Ls1}
   	\rho^{2s}T_{-s,\rho} (A_sf)(x)= \frac{4^{s} \Gamma(s)}{\Gamma(-s)} \, T_{ s,\rho} f(x) 
   	\end{equation}
   	which when expanded reads as 	  
   	$$
   	\frac{4^{s} \Gamma(s)}{\Gamma(-s)} \, u(x,\rho)  = \, \frac{4^s}{\Gamma(-s)}  \, \int_0^{\infty} (\sinh t)^{s-1} \, e^{-\frac{1}{4}(\coth t) \, \rho^2} \, e^{-tA}A_sf(x) \, dt.
   	$$
   	Differentiating with respect to $\rho$ and multiplying both sides by $-\rho^{1-2s}$, we get
   	
   	\begin{eqnarray*}
   		-\rho^{1-2s} \,	\partial_{\rho}u(x,\rho)=  \frac{1}{2\Gamma(s)} \rho^{2(1-s)} \, \int_0^{\infty} (\sinh t)^{s-1} \, (\coth t) \, e^{-\frac{1}{4}(\coth t) \, \rho^2} \, e^{-tA} A_sf(x) \, dt.
   	\end{eqnarray*}
   	Now, we make the change of variables $t\to t \rho^2,$ to get
   	\begin{eqnarray*}
   		&&	-\rho^{1-2s} \,	\partial_{\rho}u(x,\rho)\\
   		&&=  \, \frac{1}{2\Gamma(-s)} \rho^{4-2s} \, \int_0^{\infty} (\sinh (t \rho^2))^{s-1} \, (\coth (t \rho^2)) \, e^{-\frac{1}{4}(\coth (t \rho^2)) \, \rho^2} \, e^{-t \rho^2L} L_sf(x)  \, dt\\
   		&&= \, \frac{1}{2\Gamma(-s)}  \, \int_0^{\infty} t^{s-2} \, \big(\frac{\sinh (t \rho^2)}{t\rho^2}\big)^{s-1} \, \coth (t \rho^2)(t\rho^2) \, e^{-\frac{1}{4} \coth (t \rho^2) \, \rho^2} \, e^{-t \rho^2 A} A_sf(x)\, dt.
   	\end{eqnarray*}
   	Under  the extra assumption that  $A_sf \in L^p(\gamma),~1\leq p<\infty$  we know that  $\displaystyle\lim_{\rho\rightarrow0} e^{-\rho^2 t A}A_sf=A_sf,$ in $ L^p(\gamma).$  So as $\rho \to 0$, 
   	we can argue as in the proof of the Theorem \ref{epsolth} to obtain 
   	$$\displaystyle\lim_{\rho\rightarrow0} \left(-\rho^{1-2s} \, \partial_{\rho} u(x,\rho)\right)=  \, \frac{1}{2\Gamma(s)} \, A_sf\left(\int_0^{\infty} t^{s-2} \, e^{-\frac{1}{4t}} dt\right).  $$
   	Computing the last integral and simplifying we obtain
   	$$ \displaystyle\lim_{\rho\rightarrow0} \left(\rho^{1-2s} \, \partial_{\rho} u(x,\rho)\right) =-2^{(1-2s)} \, \frac{\Gamma(1-s)}{\Gamma(s)} \, A_sf.  $$
   \end{proof} 	
  	\section{Characterisations of solutions of the extension problem}
  	In this section we prove several characterisations of solutions of the extension problem for $L$.  Recall that the extension problem for $L$ reads as 
  	$$  \Big( -L + \partial_\rho^2 + \frac{1-2s}{\rho} \partial_\rho - \frac{1}{4}\rho^2 \Big) u(x,\rho) = 0,\,\, u(x,0) = f(x) .$$ Now given $\alpha\in\mathbb{N}^n$ and $\rho>0$ we define the Fourier-Hermite coefficients associated to the expansion in terms of the normalised Hermite polynomials $H_{\alpha}$ as 
  	\[\tilde{u}(\alpha,\rho) :=\int_{\R^n}u(x,\rho)H_{\alpha}(x)d\gamma(x).\] 
  	Now letting $v_{\alpha}(\rho):=\tilde{u}(\alpha,2\sqrt{\rho})$, we see that 
  	\[\Big( -(2|\alpha|+n) + \rho\partial_\rho^2 + (1-s) \partial_\rho -  \rho \Big) v_{\alpha}(\rho)= 0,\,\,    v_{\alpha}(0)= (f,H_{\alpha})_{L^2(\gamma)}.\]
  	
  	Again if we write $v_{\alpha}(\rho)=e^{-\rho}g_{\alpha}(2\rho)$, then it can be easily checked that the above equation changes to 
  	\[rg_{\alpha}''(r)+(1-s-r)g_{\alpha}'(r)-\frac{2|\alpha|+n+1-s}{2}g_{\alpha}(r)=0\]where  $r=2\rho.$ Now we let $g_{\alpha}(r)=r^sh_{\alpha}(r)$ which leads to 
  	\begin{equation}
  	\label{kform}
  	rh_{\alpha}''(r)+(1+s-r)h_{\alpha}'(r)-\frac{2|\alpha|+n+1+s}{2}h_{\alpha}(r)=0 . 
  	\end{equation}
  	Note that this is in the form of Kummer's equation $xh^{"}(x)+(b-x)h^{'}-ah(x)=0$.  The  solutions of the Kummer's equation are given by the functions  $M(a,b,x)$ and $V(a,b,x)$ which are known as the confluent hypergeometric functions. The function $M$ is given by $M(a,b,x)=\sum_{m=0}^{\infty}\frac{(a)_m}{(b)_mm!}x^m$  is analytic and   
  		\[V(a,b,x)=\frac{\pi}{\sin \pi b}\left(\frac{M(a,b,x)}{\Gamma(1+a-b)\Gamma(b)}-x^{1-b}\frac{M(1+a-b,2-b,x)}{\Gamma(a)\Gamma(2-b)}\right),\ \ x>0.\] Also $V$ has the  integral representation given by 
  		\[V(a,b,x)=\frac{1}{\Gamma(a)}\int_{0}^{\infty}e^{-tx}t^{a-1}(1+t)^{b-a-1}dt,~~x>0.\] For more details see for instance \cite[Chapter 13]{AS} and also \cite[Lemma 5.2]{FGMT}.   
		
		Finally writing  $\mu=\frac{s}{2}$ and $\kappa =  |\alpha|+n/2 $  another substitution $w_{\alpha}(r)=e^{-\frac{1}{2}r} r^{\frac{1}{2}+\mu} h_{\alpha}(r), $ transform the equation \ref{kform} to 
		\begin{equation}
		\label{weq}
		w_{\alpha}''(r) +\left( -\frac{1}{4}-\frac{\kappa }{r}+\frac{1/4-\mu^2}{r^2}\right)w_{\alpha}(r)=0
		\end{equation} which is in the form of a Whittaker equation. This warrants the following lemma which describes the properties of solutions of Whittaker equation.
		\begin{lem}[\cite{OM}]
		Let $\kappa\in\R$ and $-2\mu\neq \mathbb{N}$. The two linearly independent solutions of the ordinary differential equation $$w''(x) +\left( -\frac{1}{4}+\frac{\kappa}{x}+\frac{1/4-\mu^2}{x^2}\right)w(x)=0$$
are given by the functions $M_{\kappa,\mu}(x)$	 and $W_{\kappa,\mu}(x)$	 where $$M_{\kappa,\mu}(x)=e^{-x/2}x^{1/2+\mu}\sum_{p=0}^{\infty}\frac{\frac12+\mu-\kappa}{(1+2\mu)_pp!}x^p$$ and when $2\mu$ is not an integer 
$$W_{\kappa,\mu}(x)=\frac{\Gamma(-2\mu)}{\Gamma(1/2-\mu-\kappa)}M_{\kappa,\mu}(x)+\frac{\Gamma(+2\mu)}{\Gamma(1/2+\mu-\kappa)}M_{\kappa,-\mu}(x).$$ Moreover we have the following asymptotic properties these Whittaker functions.

For large $x$ \begin{align} &M_{\kappa,\mu}(x)\sim \frac{\Gamma(1+2\mu)}{\Gamma(1/2+\mu-\kappa)}e^{x/2}x^{-\kappa},~ \mu-\kappa\neq -1/2,-3/2,...\text{and}\\
&W_{\kappa,\mu}(x)\sim e^{-x/2}x^{\kappa}.\end{align} Also as $x\rightarrow 0$ we have 
\begin{align}\label{wasmall}
	&M_{\kappa,\mu}(x)= x^{\mu+1/2}(1+O(x)),~2\nu\neq -1,-2,-3,...~\text{and}\\&W_{\kappa,\mu}(x)=\frac{\Gamma(2\mu)}{\Gamma(1/2+\mu-\kappa)}x^{1/2-\mu}+\frac{\Gamma(-2\mu)}{\Gamma(1/2-\mu-\kappa)}x^{1/2+\mu}+O(x^{3/2-\mu}) , ~0< \mu<1/2, 
	\end{align}
\end{lem}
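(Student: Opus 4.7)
The plan is to reduce the Whittaker equation to Kummer's confluent hypergeometric equation, from which both solutions and their asymptotics can be read off. First I would make the substitution $w(x) = e^{-x/2} x^{1/2+\mu} v(x)$ and compute $w''$ via the product rule. After collecting terms, the factor $e^{-x/2} x^{1/2+\mu}$ cancels and one is left with
\[
x v''(x) + (1 + 2\mu - x) v'(x) - \Bigl( \tfrac{1}{2} + \mu - \kappa \Bigr) v(x) = 0,
\]
which is precisely Kummer's equation with parameters $a = \tfrac{1}{2} + \mu - \kappa$ and $b = 1 + 2\mu$. One solution is $v(x) = M(a,b,x) = \sum_{p=0}^{\infty} \frac{(a)_p}{(b)_p p!} x^p$, and unwinding the substitution gives $M_{\kappa,\mu}(x)$ with the series displayed in the lemma. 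Since the Whittaker equation depends on $\mu$ only through $\mu^2$, the function $M_{\kappa,-\mu}(x)$ is automatically another solution; the indicial analysis at $x=0$ shows the two leading behaviors are $x^{1/2+\mu}$ and $x^{1/2-\mu}$, hence the solutions are linearly independent whenever $2\mu \notin \mathbb{Z}$.

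Next, to build $W_{\kappa,\mu}$ I would characterise it as the unique (up to scalar) solution that is subdominant at infinity. Using Kummer's transformation and the classical large-$x$ asymptotic
\[
M(a,b,x) \sim \frac{\Gamma(b)}{\Gamma(a)}\, e^{x} x^{a-b} + \frac{\Gamma(b)}{\Gamma(b-a)}\, (-x)^{-a} \quad (x \to \infty),
\]
I would compute the leading exponentially growing coefficients of $M_{\kappa,\mu}$ and $M_{\kappa,-\mu}$ and pick the linear combination that annihilates the $e^{x/2}$ part. Normalising so that the surviving term behaves like $e^{-x/2} x^{\kappa}$ pins down exactly the coefficients $\Gamma(-2\mu)/\Gamma(1/2 - \mu - \kappa)$ and $\Gamma(2\mu)/\Gamma(1/2 + \mu - \kappa)$ in the definition of $W_{\kappa,\mu}$. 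The large-$x$ asymptotics of $M_{\kappa,\mu}$ and $W_{\kappa,\mu}$ then follow directly from the asymptotic of $M(a,b,x)$ above.

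The small-$x$ expansion of $M_{\kappa,\mu}(x) = x^{1/2+\mu}(1 + O(x))$ is immediate from the series; truncating after the first term gives the error. For $W_{\kappa,\mu}$, substituting the series forms of $M_{\kappa,\mu}$ and $M_{\kappa,-\mu}$ into the defining linear combination yields the two dominant terms $\frac{\Gamma(2\mu)}{\Gamma(1/2+\mu-\kappa)} x^{1/2-\mu}$ and $\frac{\Gamma(-2\mu)}{\Gamma(1/2-\mu-\kappa)} x^{1/2+\mu}$ plus the next-order remainder $O(x^{3/2-\mu})$ coming from the $p=1$ term in the series for $M_{\kappa,-\mu}$ (valid for $0 < \mu < 1/2$).

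The main obstacle is the rigorous derivation of the large-$x$ asymptotic of $M(a,b,x)$, since the naive series gives no information there. I would bypass this either by invoking Watson's lemma applied to the integral representation $V(a,b,x) = \Gamma(a)^{-1}\int_0^{\infty} e^{-tx} t^{a-1} (1+t)^{b-a-1}\,dt$ (which yields the decaying behavior $e^{-x/2} x^{\kappa}$ for the corresponding Whittaker function) and then using the connection formula expressing $M$ in terms of $V$ and $V$ evaluated at $-x$ (via a suitable contour) to recover the growing exponential; alternatively, one may invoke the steepest-descent analysis of a Barnes-type contour integral for $M(a,b,x)$. Since this is a classical and well-documented result, the argument reduces to an appeal to standard references such as \cite{OM} or \cite{AS}.
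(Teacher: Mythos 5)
The paper offers no proof of this lemma at all: it is quoted verbatim from the NIST Handbook \cite{OM}, so there is nothing internal to compare your argument against. Your sketch is the standard classical derivation and it is correct in its main lines. The substitution $w=e^{-x/2}x^{1/2+\mu}v$ does reduce the Whittaker equation exactly to Kummer's equation with $a=\tfrac12+\mu-\kappa$, $b=1+2\mu$ (the coefficient of $v/x^2$ cancels identically, leaving only $(\kappa-\tfrac12-\mu)/x$), the cancellation of the $e^{x/2}x^{-\kappa}$ terms in the defining combination for $W_{\kappa,\mu}$ follows from the reflection formula $\Gamma(-2\mu)\Gamma(1+2\mu)=-\Gamma(2\mu)\Gamma(1-2\mu)$, and the small-$x$ statements are indeed read off from the series. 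The one genuinely nontrivial ingredient is the large-$x$ connection formula for $M(a,b,x)$, which you correctly identify and defer to Watson's lemma applied to the integral for $V(a,b,x)$ together with a connection formula, or to a Barnes-integral argument; since the paper itself settles for a citation, that level of appeal is entirely in keeping with the source. Two minor cautions: the lemma's claim of \emph{two linearly independent solutions} $M_{\kappa,\mu}$ and $W_{\kappa,\mu}$ is what actually needs checking (their Wronskian is $-\Gamma(1+2\mu)/\Gamma(\tfrac12+\mu-\kappa)$, which degenerates when $\tfrac12+\mu-\kappa\in -\N_0$), whereas your indicial argument establishes independence of $M_{\kappa,\mu}$ and $M_{\kappa,-\mu}$; and in the application at hand $\kappa=-(k+n/2)$ and $\mu=s/2$ with $0<s<1$, so all these nondegeneracy conditions are automatic.
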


 In view of the above lemma  generic solutions of \ref{weq} are given by 
 $$w_{\alpha}(r)=C_{1}(|\alpha|) M_{-(|\alpha|+n/2),s/2}(r)+C_2(|\alpha|)W_{-(|\alpha|+n/2),s/2}(r).$$  But we know  $v_{\alpha}(\rho)=e^{-\rho}g_{\alpha}(2\rho)=e^{-\rho}(2\rho)^sh_{\alpha}(\rho)= e^{-\rho}(2\rho)^se^{\rho/2}\rho^{-1/2-\mu}w_{\alpha}(\rho)$ and by definition $v_{\alpha}(\rho) =\tilde{u}(\alpha,2\sqrt{\rho}).$  Hence we have
  	\begin{align}
  	\label{FHsol}
  	\tilde{u}(\alpha,\rho)=\left(\frac{1}{2}\rho^2\right)^{\frac{s-1}{2}} \left(C_1(|\alpha|)W_{-(|\alpha|+n/2),s/2}( \rho^2/2)+C_2(|\alpha|)M_{-(|\alpha|+n/2),s/2}( \rho^2/2)\right).
  	\end{align}
 The initial condition on the solution along with behaviour of the Whittaker  functions stated in the previous lemma allows us to conclude that 
$$ C_1(|\alpha|) = \frac{1}{\Gamma(s)} \Gamma\left(\frac12(2k+n+s+1)\right) (f,H_{\alpha})_{L^2(\gamma)}.$$ 
Thus the solution of the extension problem can be written as a sum of two functions, namely
$$ (\frac{1}{2}\rho^2)^{\frac{s-1}{2}} \frac{1}{\Gamma(s)}\sum_{k=0}^{\infty}\Gamma\left(\frac12(2k+n+s+1)\right) W_{-(k+n/2),s/2}( \rho^2/2)Q_kf$$
and  another function given by the series 
$$  (\frac{1}{2}\rho^2)^{\frac{s-1}{2}}  \sum_{\alpha \in \N^n}  C_2(|\alpha|)M_{-(|\alpha|+n/2),s/2}( \rho^2/2)  H_\alpha(x) .$$  
The above series converges under some decay conditions on the coefficients $ C_2(|\alpha|)$ as we will see soon. We make use of these considerations in the proof of Theorem \ref{ch}  below.
  	
  	To proceed further with our description of solutions of the extension problem, we need  the following asymptotic properties of Whittaker functions appearing in the above expressions for large values of the parameter $ k.$   
  	\begin{lem} 
  		\label{Walarge}
  		For  any $\rho \in (0,\infty),$ we have the following asymptotic properties, as $ k $ tends to infinity, we have 
  		\begin{equation}\label{m}
  		\left(\frac12\rho^2\right)^{\frac{s-1}{2}}M_{-(k+n/2),s/2}(\rho^2/2)\sim (\rho)^{s-1/2}(\sqrt{2k+n})^{-s-1/2} e^{2(2k+n) \zeta\left(\frac{\rho^2}{4(2k+n)}\right)^{\frac{1}{2}} }
  		\end{equation} 
  		\begin{equation}\label{w}
  		\left(\frac12\rho^2\right)^{\frac{s-1}{2}}W_{-(k+n/2),s/2}(\rho^2/2)\sim \frac{(\rho\sqrt{2k+n})^{s-1/2}}{\Gamma\left(\frac12(2k+n+1+s)\right)}  e^{-2(2k+n) \zeta\left(\frac{\rho^2}{4(2k+n)}\right)^{\frac{1}{2}} },
  		\end{equation}
  			where $2\sqrt{\zeta(x)}=\sqrt{x+x^2}+ \ln(\sqrt{x}+\sqrt{x+1}),~x>0.$
  	\end{lem}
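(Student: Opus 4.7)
The plan is to establish the two asymptotics separately by applying Laplace's method to suitable integral representations of $W$ and $M$ with large parameter $a:=k+n/2$, while $\mu=s/2$, $z=\rho^2/2$ and $u:=z/(4a)=\rho^2/(4(2k+n))$ remain fixed.

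For $W_{-(k+n/2),s/2}(\rho^2/2)$, I would start from the integral representation
$$W_{-a,s/2}(z)=\frac{z^{(s+1)/2}e^{-z/2}}{\Gamma(\tfrac{1}{2}+\tfrac{s}{2}+a)}\int_0^\infty e^{-zt}\,t^{s/2+a-1/2}(1+t)^{s/2-a-1/2}\,dt,$$
valid since $\tfrac{1}{2}+\tfrac{s}{2}+a>0$. Writing the integrand as $e^{\phi_a(t)}$ with $\phi_a(t)=-zt+a\log\bigl(t/(1+t)\bigr)+\tfrac{s-1}{2}\log\bigl(t(1+t)\bigr)$, the saddle for large $a$ sits at $t_*(1+t_*)=a/z$, so that $1+2t_*=\sqrt{1+4a/z}$. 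Using the antiderivative identity
$$\int_0^u\sqrt{\tfrac{v+1}{v}}\,dv=\sqrt{u+u^2}+\log(\sqrt{u}+\sqrt{u+1})=2\sqrt{\zeta(u)},$$
a direct computation yields $\phi_a(t_*)=-4a\sqrt{\zeta(u)}+2au+\tfrac{s-1}{2}\log(a/z)$; the $e^{2au}=e^{z/2}$ contribution exactly cancels the $e^{-z/2}$ prefactor. Combining with the Gaussian factor $\sqrt{2\pi/|\phi_a''(t_*)|}\sim\sqrt{\pi}\,a^{1/4}z^{-3/4}$ and multiplying by $(\rho^2/2)^{(s-1)/2}$ collapses the algebraic prefactor to $(\rho\sqrt{2k+n})^{s-1/2}$, and division by $\Gamma((2k+n+1+s)/2)$ yields the asserted expression up to an $s$-dependent constant absorbed by the symbol $\sim$.

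For $M_{-(k+n/2),s/2}(\rho^2/2)$, the cleanest route is via the Kummer identity $M_{\kappa,\mu}(z)=z^{\mu+1/2}e^{-z/2}M(\tfrac{1}{2}+\mu-\kappa,1+2\mu,z)$, which reduces the problem to the large-first-parameter behaviour of the confluent hypergeometric function. The classical result (cf.\ DLMF 13.8.9) gives
$$M(\tfrac{1}{2}+\tfrac{s}{2}+a,\,1+s,\,z)\sim\Gamma(1+s)\,e^{z/2}(az)^{-s/2}I_s(2\sqrt{az})\qquad (a\to\infty),$$
and since $2\sqrt{az}=\rho\sqrt{2k+n}\to\infty$ one substitutes $I_s(x)\sim e^x/\sqrt{2\pi x}$. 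With $a=(2k+n)/2$ and $z=\rho^2/2$, multiplying by $(\rho^2/2)^{(s-1)/2}$ makes the algebraic powers collapse to $\rho^{s-1/2}(\sqrt{2k+n})^{-s-1/2}$. The exponent $\rho\sqrt{2k+n}$ coincides with $2(2k+n)\sqrt{\zeta(u)}$ to leading order as $k\to\infty$: since $u\to 0$, the expansion $2\sqrt{\zeta(u)}=2\sqrt{u}+O(u^{3/2})$ gives $2(2k+n)\sqrt{\zeta(u)}=\rho\sqrt{2k+n}+o(1)$.

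The principal obstacle is the bookkeeping of universal constants (powers of $2$, $\pi$, and the Gamma factor) and justifying that Laplace's method applies \emph{uniformly} as $k\to\infty$: the saddle $t_*\sim\sqrt{a/z}$ drifts with $k$, so one must verify that a neighbourhood of $t_*$ dominates the integral and that the discarded tails are exponentially smaller than the leading term. Both items are classical but tedious; the $\sim$ in the statement absorbs the resulting multiplicative constants.
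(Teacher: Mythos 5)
Your computations are correct, and your route is genuinely different from the paper's. The paper simply quotes the \emph{uniform} large-$\kappa$ asymptotic expansions of Whittaker functions (DLMF 13.21.6--13.21.7), which express $M_{-\kappa,\mu}(4\kappa x)$ and $W_{-\kappa,\mu}(4\kappa x)$ in terms of $I_{2\mu}\bigl(4\kappa\zeta(x)^{1/2}\bigr)$ and $K_{2\mu}\bigl(4\kappa\zeta(x)^{1/2}\bigr)$, then feeds in the large-argument Bessel asymptotics and finishes with Stirling's formula to convert the factor $e^{\kappa}\kappa^{-(\kappa-1/2)}$ into $1/\Gamma\bigl(\tfrac12(2k+n+1+s)\bigr)$. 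You instead re-derive the two estimates from scratch: for $W$, Laplace's method on the Tricomi integral, where your saddle location $t_*(1+t_*)=a/z$, the value $\phi_a(t_*)=-4a\sqrt{\zeta(u)}+z/2+\tfrac{s-1}{2}\log(a/z)$, and the Gaussian factor $\sqrt{\pi}\,a^{1/4}z^{-3/4}$ all check out and collapse to exactly the stated form (with the bonus that the Gamma factor appears directly from the integral representation, so no Stirling step is needed); for $M$, the Perron/DLMF~13.8 large-first-parameter Kummer asymptotics followed by $I_s(x)\sim e^x/\sqrt{2\pi x}$, with the exponent $\rho\sqrt{2k+n}$ agreeing with $2(2k+n)\zeta(u)^{1/2}$ only up to an error $O(\rho^3(2k+n)^{-1/2})\to0$ --- which is enough, and which the paper itself implicitly exploits later via its estimate \ref{zeta}. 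What the paper's approach buys is brevity and rigor-by-citation (the 13.21.x expansions come with explicit $O(\kappa^{-1})$ error bounds); what yours buys is self-containedness and a cleaner emergence of the Gamma factor. The only item you leave open --- uniform validity of Laplace's method as the saddle drifts like $\sqrt{a/z}$ --- is genuinely fine here: the Gaussian width is $O(a^{1/4}z^{-3/4})$, which is $o(t_*)$, and $\phi_a'''(t_*)$ times the cube of the width tends to zero, so the standard error analysis closes. Since the lemma's $\sim$ (as used in the paper's own proof) already absorbs multiplicative constants, your derivation establishes the statement.
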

  	\begin{proof}
  		For large value of $\kappa$ and for any $x\in (0,\infty)$ the following asymptotic properties can be found in    \cite[13.21.6,13.21.7]{OM}    
  		\begin{equation}
  		\label{Mor}
  		M_{-\kappa,\mu}(4 \kappa x)= \frac{2\Gamma(2\mu+1)}{\kappa^{\mu-\frac{1}{2}}} \left(\frac{x\zeta(x )}{1+x} \right)^{\frac{1}{4}} \, I_{2\mu}\left(4\kappa \zeta\left(x\right)^{\frac{1}{2}}\right)\left( 1+O(\kappa^{-1}) \right) 
  		\end{equation}
  		\begin{equation}
  		\label{Wor}
  		W_{-\kappa,\mu}(4 \kappa x)=\frac{\sqrt(8/\pi) e^{\kappa}}{\kappa^{\kappa-\frac{1}{2}}} \left(\frac{x\zeta(x )}{1+x} \right)^{\frac{1}{4}} \, K_{2\mu}\left(4\kappa \zeta\left(x\right)^{\frac{1}{2}}\right)\left( 1+O(\kappa^{-1}) \right)
  		\end{equation}
  	  where $I_{ 2\mu}$ is the modified Bessel function of first kind and $K_{2\mu}$ denotes the Macdonald function of order $2\mu$.  
  		Taking $x=\frac12\rho^2$ , $\kappa=k+n/2$ and $\mu=s/2$, for large value of $k$, from \ref{Mor}  we have   
  		\begin{align*}
  		&M_{-(k+n/2),s/2}(x)\\&= \frac{2\Gamma(2\mu+1)}{(k+n/2)^{\mu-\frac{1}{2}}} \left(\frac{x\zeta(x/2(2k+n))}{2(2k+n)+x} \right)^{\frac{1}{4}} \, I_{2\mu}\left(2(2k+n) \zeta\left(\frac{x}{ 2(2k+n)}\right)^{\frac{1}{2}}\right)\left( 1+O(k^{-1}) \right)
  		\end{align*}
  		Recall that   the modified Bessel function of first kind which has the following asymptotic property:
  		\begin{equation}
  		\label{mb}
  		I_{2\mu}(x)\sim \frac{1}{\sqrt{2\pi x}} e^x ~\text{when $x$ is real and } ~x\rightarrow \infty.
  		\end{equation}
  		But it is easy to see that as $k\rightarrow \infty$, $2(2k+n) \zeta\left(\frac{x}{ 2(2k+n)}\right)^{\frac{1}{2}}$ goes to infinity which by the above asymptotic property yields
  		\begin{equation}
  		\label{Ias}
  		I_{2\mu}\left(2(2k+n) \zeta\left(\frac{x}{ 2(2k+n)}\right)^{\frac{1}{2}}\right)\sim \left(2(2k+n) \zeta\left(\frac{x}{ 2(2k+n)}\right)^{\frac{1}{2}}\right)^{-1/2}e^{(2(2k+n) \zeta\left(\frac{x}{ 2(2k+n)}\right)^{\frac{1}{2}}}
  		\end{equation} 
  		valid for large values of $k$. 
  		It can be easily checked that for any $x>0$ and large $k$ 
  	\begin{equation}
  		\label{estx}
  		\left(\frac14\right)^{ \frac14}\left(\frac{x}{2k+n}\right)^{\frac14}\leq \left(\frac{x}{2(2k+n)+x}\right)^{\frac14}\leq \left(\frac34\right)^{\frac14} \left(\frac{x}{2k+n}\right)^{\frac14}
  		\end{equation}  
  		This, along with \ref{Ias}  proves the result for the function $M_{-(k+n/2),s/2}$.   
  		
  		Similarly we can obtain the  asymptotic property for the other function. Indeed, for large $k$ from \ref{Wor}  we have 
  		\begin{align*}
  		&W_{-(k+n/2),s/2}(x)=\frac{\sqrt(8/\pi) e^{(k+n/2)}}{(k+n/2)^{(k+n/2)-\frac{1}{2}}} \left(\frac{x\zeta(x/2(2k+n))}{2(2k+n)+x} \right)^{\frac{1}{4}} \, \\
  		&K_{2\mu}\left(2(2k+n) \zeta\left(\frac{x}{ 2(2k+n)}\right)^{\frac{1}{2}}\right)\left( 1+O(k^{-1}) \right).
  		\end{align*}
  		Now the Macdonald's function $K_{2\mu}(z)$  has the following asymptotic property:
  		\begin{equation}
  		\label{mac}
  		K_{2\mu}(x)\sim \sqrt{\frac{\pi}{2x}}e^{-x},~\text{when $x$ is real and } ~x\rightarrow \infty.
  		\end{equation} Again for same reason as above as $k\rightarrow\infty$, using \ref{mac} we have
  		 \begin{equation}
  		 \label{Kas}
  		 K_{2\mu}\left(2(2k+n) \zeta\left(\frac{x}{ 2(2k+n)}\right)^{\frac{1}{2}}\right)\sim \left(2(2k+n) \zeta\left(\frac{x}{ 2(2k+n)}\right)^{\frac{1}{2}}\right)^{-1/2}e^{-2(2k+n) \zeta\left(\frac{x}{ 2(2k+n)}\right)^{\frac{1}{2}}}.
  		 \end{equation}
  	Using the Stirling's formula   $\Gamma(x)=\sqrt{2\pi}x^{x-1/2}e^{-x}e^{\theta(x)/12x},~0<\theta(x)<1$ true for $x>0$ (See Ahlfors \cite{A}), we have
  		$$\frac{\Gamma\left(\frac12(2k+n+1+s)\right)e^{(k+n/2)}}{(k+n/2)^{(k+n/2)-\frac{1}{2}}}= \frac{\Gamma\left(\frac12(2k+n+1+s)\right)}{e^{-\theta(k+n/2)/6(2k+n)}\Gamma\left(\frac12(2k+n) \right)}\sim \left(\frac12(2k+n)\right)^{(1+s)/2},$$ 
		$\text{as}~k\rightarrow \infty$. 
  		This observation along with \ref{estx} and the asymptotic property \ref{Kas}  yields \[\Gamma\left(\frac12(2k+n+1+s)\right) W_{-(k+n/2),s/2}(x)\sim  (2k+n)^{s/2-1/4}x^{1/4}e^{-2(2k+n) \zeta\left(\frac{x}{ 2(2k+n)}\right)^{\frac{1}{2}}}\] completing the   proof of the lemma. 
  	\end{proof}
  \begin{rem}
  	It can be easily checked that for large $\kappa$ the following inequality is valid for any $x>0$:
  	\begin{equation}
  	\label{zeta}
  	\frac12 \sqrt{x\kappa}\leq\kappa \sqrt{\zeta(x/\kappa)}\leq \frac32 \sqrt{x\kappa}
  	\end{equation} 
  	 which can be used to further simplify the exponential part in the above estimates.  
  \end{rem}
  	
  	 The analysis preceding Lemma \ref{Walarge} motivates us to define the following two operators. Given a  distribution $f$ such that $M_{\gamma}f$ is a tempered distribution   we define    
	\begin{equation}\label{S1}
	S^1_{\rho}f= \frac{(\frac{1}{2}\rho^2)^{\frac{s-1}{2}}}{\Gamma(s)}\sum_{k=0}^{\infty}\Gamma\left(\frac12(2k+n+s+1)\right)W_{-(k+n/2),s/2}( \rho^2/2)Q_kf. 
	\end{equation} Recall  that $h$ is a tempered distribution on $\R^n$ if and only if the Hermite coefficients  satisfy the estimate $|(h,\Phi_{ \alpha})|\le C(2|\alpha|+n)^m$ for some integer $m$.  So $M_{\gamma}f$ being a tempered distribution, its Hermite coefficients have at most polynomial growth and consequently  $Q_kf$ has polynomial growth in $k$. So  because of the exponential decay in \ref{w}, the above series defining $S^1_{\rho}f$ converges uniformly. Consequently, in view of \ref{FHsol}, $S^1_{\rho}f$ defines a solution of the extension problem. 
	
	Also considering the other solution of the Whittaker equation we define the operator $S_{\rho}^2$  for nice functions $g$ by  
	\begin{equation}\label{S2}
	S^2_{\rho}g=\left(\frac{1}{2}\rho^2\right)^{\frac{s-1}{2}}\sum_{k=0}^{\infty}M_{-(k+n/2),s/2}( \rho^2/2)Q_kg.
	\end{equation}
 It is not hard to see that as the Whittaker function $M_{-(k+n/2),s/2}(\rho^2/2)$ has exponential growth as $k\rightarrow\infty$, $Q_kg$ must have enough decay for the series in \ref{S2} to converge. This encourages us to determine condition on the function $g$ so that the projections $Q_kg$ have enough decay. Now as can be seen in the above lemma, the function $M_{-(k+n/2),s/2}(\rho^2/2)$ is growing like $e^{c\rho\sqrt{2k+n}}$ for large value of $k$ which leads us to consider the image of $L^2(\gamma)$ under the semigroup $e^{-tL^{\frac12}}$ which we denote by $H^2_{\gamma,t}(\R^n)$. Clearly if $g  \in  \cap_{t>0} H^2_{\gamma,t}(\R^n)$, the series in \ref{S2} converges and defines a smooth function.  But in view of the connection between $L$ and the Hermite operator $H$, we note that  a function $g$ is in $H^2_{\gamma,t}(\R^n)$ if and only if $ge^{-|.|^2/2}$ is in the image of $L^2(\R^n)$ under the poisson semigroup $e^{-tH^{\frac12}}$. Let us write $H^2_t(\R^n):=e^{-tH^{\frac12}}(L^2(\R^n)).$   
  	We are ready to prove the following characterization for the solution of the extension problem.  
  	\begin{thm}\label{ch}
  		 Let $ f $ be a distribution  such that $ M_\gamma f$ is tempered. Then any function  $u(x,\rho)$   for which $ M_\gamma u(x,\rho) $ is tempered in $ x, $ is a solution of the extension problem \ref{ep1}  with initial condition $ f $ if and only if $u(x,\rho)=S^1_{\rho}f(x)+S^2_{\rho}g(x)$ for some $g \in  \cap_{t>0}H^2_{\gamma,t}(\R^n).$  
  	\end{thm}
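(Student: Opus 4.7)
The plan is an ODE-to-PDE argument: expand in the Hermite basis, reduce the extension equation to the Whittaker equation \ref{weq}, and use the classification of its solutions together with the asymptotic analysis carried out just before \ref{S1}--\ref{S2}. I treat the two directions separately.

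For the ``if'' direction, suppose $g \in \cap_{t>0} H^2_{\gamma,t}(\R^n)$. Then $\|Q_k g\|_{L^2(\gamma)} \le C_t e^{-t\sqrt{2k+n}}$ for every $t>0$, and this superpolynomial decay dominates the exponential growth $e^{c\rho\sqrt{2k+n}}$ of $M_{-(k+n/2),s/2}(\rho^2/2)$ coming from \ref{m}, so termwise differentiation in \ref{S2} is legal and $S^2_\rho g$ satisfies \ref{ep1}. Likewise $S^1_\rho f$ was engineered so that each summand solves the reduced ODE, and the series converges by the decay \ref{w} paired with the polynomial growth of $Q_k f$ inherited from temperedness of $M_\gamma f$. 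The small-$\rho$ asymptotics \ref{wasmall} give $(\tfrac{1}{2}\rho^2)^{(s-1)/2} M_{-(k+n/2),s/2}(\rho^2/2) = O(\rho^{2s})$, so $S^2_\rho g \to 0$, while the same asymptotics make $S^1_\rho f \to f$ distributionally. Hence $u = S^1_\rho f + S^2_\rho g$ is a solution with the prescribed initial datum.

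For the ``only if'' direction, let $u$ be a solution with $M_\gamma u(\cdot,\rho)$ tempered in $x$ for each $\rho$. Setting $\tilde u(\alpha,\rho) = \langle u(\cdot,\rho), H_\alpha\rangle_{L^2(\gamma)}$ and running the substitution chain at the start of Section~3 reduces the equation for $\tilde u(\alpha,\rho)$ to the Whittaker equation \ref{weq} with $\kappa = |\alpha|+n/2$ and $\mu = s/2$. Since $0<s<1$, the two Whittaker functions are linearly independent, and
\[
\tilde u(\alpha,\rho) = (\tfrac{1}{2}\rho^2)^{(s-1)/2}\bigl(C_1(\alpha)\,W_{-(|\alpha|+n/2),s/2}(\tfrac{1}{2}\rho^2) + C_2(\alpha)\,M_{-(|\alpha|+n/2),s/2}(\tfrac{1}{2}\rho^2)\bigr).
\]
Matching the initial condition via \ref{wasmall} forces $C_1(\alpha) = \Gamma\!\bigl(\tfrac{1}{2}(2|\alpha|+n+s+1)\bigr)(f,H_\alpha)_{L^2(\gamma)}/\Gamma(s)$, so the $C_1$-contribution reassembles exactly into $S^1_\rho f(x)$. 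I then define $g$ by the formal Hermite expansion $g = \sum_\alpha C_2(\alpha)\,H_\alpha$, so that the residual $u - S^1_\rho f$ coincides termwise with $S^2_\rho g$. What remains is the regularity claim $g \in H^2_{\gamma,t}(\R^n)$ for every $t>0$, equivalently $\sum_\alpha e^{2t\sqrt{2|\alpha|+n}}\,|C_2(\alpha)|^2 < \infty$.

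This summability is the crux and the main obstacle. The temperedness hypothesis, applied at a fixed $\rho$, gives $|\tilde u(\alpha,\rho)| \le C_\rho (2|\alpha|+n)^{m_\rho}$. The $C_1$-piece has super-exponentially decaying Hermite coefficients (by the $\Gamma$-factor together with \ref{w}, combined with the polynomial growth of $(f,H_\alpha)$), so the same polynomial bound passes to the $C_2$-piece. Invoking the sharp lower estimate on $M_{-(|\alpha|+n/2),s/2}(\rho^2/2)$ implied by \ref{m} and \ref{zeta} — which supplies a factor $e^{c\rho\sqrt{2|\alpha|+n}}$ with an explicit positive constant $c$ — yields
\[
|C_2(\alpha)| \le C'_\rho\,(2|\alpha|+n)^{N_\rho}\,e^{-c\rho\sqrt{2|\alpha|+n}}.
\]
Since this estimate is available for every $\rho>0$, choosing $\rho > 2t/c$ renders the target sum $\sum_\alpha e^{2t\sqrt{2|\alpha|+n}} |C_2(\alpha)|^2$ finite, so $g \in H^2_{\gamma,t}(\R^n)$ for every $t>0$. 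The only delicate point is precisely this extraction of an exponential decay rate from a polynomial temperedness bound via the Whittaker asymptotics; the remainder is routine bookkeeping.
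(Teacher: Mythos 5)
Your proof follows essentially the same route as the paper's: Hermite expansion, reduction to the Whittaker equation, identification of $C_1$ from the initial condition via \ref{wasmall}, and extraction of exponential decay of $C_2(\alpha)$ from the temperedness bound using the growth of $M_{-(|\alpha|+n/2),s/2}$, valid for every $\rho>0$. Your explicit appeal to the \emph{lower} bound on $M$ implied by \ref{m} and \ref{zeta} is exactly the point the argument hinges on (the paper's write-up quotes the upper bound there, but the same asymptotic supplies both), so the proposal is correct and matches the paper.
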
  
  	
  	\begin{proof}
  		First suppose  $u(x,\rho)=S^1_{\rho}f(x)+S^2_{\rho}g(x)$ for some $g$ such that $g \in\cap_{t>0}H^2_{\gamma, t}(\R^n).$ Consequently we have for every $t>0$, $\|Q_kg\|^2_{L^2(\gamma)}\leq C e^{-2t(2k+n)^{1/2}}$ for large $k$. So   the expression \ref{S2} defining $S^2_{\rho}g$ is well-defined and solves the extension problem.  
  		
  		Now since $M_{\gamma}f$ is tempered distribution , as mentioned above, the Fourier-Hermite coefficients  associated to Hermite polynomials of $f$ satisfies   
  		\[|\tilde{f}(\alpha)|=|(f,H_{\alpha })_{L^2(\gamma)}|\leq C (2|\alpha|+n)^{m} \text{ for some integer m.  }\] But in view of the fact that $\sum_{|\alpha|=k}1 = \frac{(k+n-1)!}{k!(n-1)!} \leq C (2k+n)^{n-1}$ we have $\|Q_kf\|^2_{L^2(\gamma)}\leq C (2k+n)^{2m+n-1}.$   Now  the   asymptotic property \ref{m} in Lemma \ref{Walarge} along with the estimate \ref{zeta} gives 
  		$$\left(\frac12\rho^2\right)^{\frac{s-1}{2}}\Gamma\left(\frac12(2k+n+1+s)\right)W_{-(k+n/2),s/2}(\frac{1}{2}\rho^2)\leq (\rho\sqrt{2k+n})^{s-1/2}e^{-\frac12\rho\sqrt{2k+n}} $$
  		 which allow us to conclude that    
  		$$\sum_{k=0}^{\infty}\left(\Gamma\left(\frac12(2k+n+1+s)\right)W_{-(k+n/2),s/2}(\frac{1}{2}\rho^2)\right)^2(2k+n)^{2m+n-1}<\infty.$$ Consequently, 
  		 $S^1_{\rho}f$ make sense and hence solves the extension problem.   Now we observe that an easy calculation yields 	
  \begin{align}
  \label{LV}
&\frac{(\frac{1}{2}\rho^2)^{\frac{s-1}{2}}}{\Gamma(s)}\Gamma\left(\frac12(2k+n+1+s)\right)W_{-(k+n/2),s/2}(\frac{1}{2}\rho^2)\\
&=\frac{2^{-s}}{\Gamma(s)}\rho^{2s}L\left(\frac{1}{4} \, \rho^2, \frac{2|\alpha|+n+1+s}{2},\frac{2|\alpha|+n+1-s}{2}\right),\notag 
\end{align} 
 which together with the expression \ref{S1} yields that $S^1_{\rho}f$ is in the form \ref{epsol} and as discussed in the previous subsection, this converges to $f$ as $\rho\rightarrow0$.  Also note that from the asymptotic property in Lemma \ref{wasmall}, we have $(\frac{1}{2}\rho^2)^{\frac{s-1}{2}}M_{-(k+n/2),s/2}(\frac{1}{2}\rho^2)$ approaches to zero as $\rho\rightarrow0$. So $S^2_{\rho}g\rightarrow0$ as $\rho\rightarrow0$. Therefore, $u=S^1_{\rho}f+S^2_{\rho}g$ solves the extension problem with initial condition $f$.

  		Conversely, suppose $u(x,\rho)$ is a solution of the extension problem \ref{ep} with initial condition $f$ whose Fourier-Hermite coefficients associated to the Hermite polynomials have tempered growth.  Then as discussed in the beginning of this subsection we have
  		$$\tilde{u}(\alpha,\rho)= (\frac{1}{2}\rho^2)^{\frac{s-1}{2}} \left(C_1(|\alpha|)W_{-(k+n/2),s/2}(\frac{1}{2}\rho^2)+C_2(|\alpha|)M_{-(k+n/2),s/2}(\frac{1}{2}\rho^2)\right).$$
		 Now using $\tilde{u}(\alpha,0)=(f,H_{\alpha})$ and behaviour of $(\frac{1}{2}\rho^2)^{\frac{s-1}{2}}  W_{-(k+n/2),s/2}(\frac{1}{2}\rho^2)$ near $\rho=0$ (see Lemma \ref{wasmall}) we have 
		$$C_1(|\alpha|)=\frac{\Gamma\left(\frac12(2|\alpha|+n+1+s)\right)}{\Gamma(s)}(f,H_{\alpha})_{L^2(\gamma)}.$$
		 Also since $M_{\gamma}u(x,\rho)$ is tempered, $\tilde{u}(\alpha,\rho)$ has atmost polynomial growth in $|\alpha|$. But the estimate \ref{zeta} along with the asymptotic property \ref{m} yield 
		 $$\left(\frac12\rho^2\right)^{\frac{s-1}{2}}M_{-(k+n/2),s/2}(\rho^2/2)\leq C (\rho)^{s-1/2}(\sqrt{2k+n})^{-s-1/2} e^{\frac32\rho\sqrt{2k+n}}$$ for large $k$. 
		  Hence we must have $C_2(|\alpha|)$ decaying as $e^{-\frac32\rho(2|\alpha|+n)^{1/2}}$ for every $\rho>0.$    
		So let us take  $g=\sum_{  \alpha \in\mathbb{N}^n }C_{2}( |\alpha|)H_{\alpha }$.  Then the function $g$ satisfies  $\|Q_kg\|^2_{L^2{(\gamma})}\leq Ce^{-\frac32\rho(2k+n)^{1/2}}$ for every $\rho>0$.  This ensures that $g\in H^2_{\gamma,3\rho/2}(\R^n)$ for every $\rho>0 $ which completes the proof.
  	\end{proof} 
  \begin{rem}
  	 For any $ \rho > 0 $ the space $ H^2_{\rho}(\R^n) $ has an interesting characterisation. 
  	   It is well-known that any $ g $ from this space has a holomorphic extension   to the tube domain  $\Omega_{\rho} = \{z=x+iy\in\C^n: |y|< \rho \} $  in $ \C^n $  which belongs to $ L^2(\Omega_\rho, w_\rho) $ for an explicit positive weight function $w_\rho$ given by 
  	   $$w_{\rho}(z)=(\rho^2-|y|^2)^{n/2} \frac{J_{n/2-1}(2i(\rho^2-|x|^2)^{1/2}|x|)}{(2i(\rho^2-|x|^2)|x|)^{n/2-1}},~z=x+iy\in \C^n $$ 	where $J_{n/2-1}$ denotes the Bessel function of order $(n/2-1).$ 
  	     We denote this weighted Bergman space by $ H^2_{\rho}(\C^n) .$
  	     In \cite{T1}, Thangavelu proved that for any holomorphic function $F$ on $\Omega_{\rho}$ 
  	     \begin{equation}
  	     \label{Psc}
  	      \int_{\Omega_{ \rho }}|F(z)|^2w_{\rho}(z)dz= c_n\sum_{k=0}^{\infty}\|P_kf\|^2 \frac{k!(n-1)!}{(k+n-1)!}L^{n-1}_k(-2\rho^2)e^{\rho^2} 
  	     \end{equation}
  	     where $f$ is the restriction of $F$ to $\R^n. $ In view of this identity we
  	      see that   $ g \in  H^2_{\rho}(\R^n) $ if and only if  the function $M_\gamma g$ extends holomorphically to $\Omega_{\rho}$ and belongs to $  H^2_\rho(\C^n) .$ We refer the reader to \cite{T1} for more details in this regard. From this observation we infer that the condition $g \in  \cap_{t>0}H^2_{\gamma,t}(\R^n) $ in the above theorem can be replaced by the requirement  $M_{\gamma}g$ extends holomorphically and belongs to $\cap_{t>0}H^2_{ t}(\C^n).$
  \end{rem}
  	We also have the following characterization of the solution $u(x,\rho)$ when $M_{\gamma}u(x,\rho)$ has tempered growth in both  the variables. 
  	\begin{thm}
  		Suppose $u(x,\rho)$ is a solution of the extension problem \ref{ep} with $M_{\gamma}u$ is tempered   (in both variables). Then $u=S^1_{\rho}f$ for some $f\in L^p(\gamma)$ if and only if $\sup_{\rho>0}\|u(.,\rho)\|_{L^p(\gamma)}\leq C.$
  	\end{thm}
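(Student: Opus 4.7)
Assume $u(x,\rho) = S^1_\rho f(x)$ with $f\in L^p(\gamma)$. Identity \ref{LV} allows one to rewrite $S^1_\rho f$ as the integral
\[ T_{s,\rho} f(x) = \frac{4^{-s}}{\Gamma(s)} \rho^{2s} \int_0^\infty k_{t,s}(\rho)\, e^{-tL} f(x)\, dt \]
already considered in Theorem \ref{epsolth}. Because the Ornstein--Uhlenbeck semigroup $e^{-tL}$ is contractive on each $L^p(\gamma)$, Minkowski's inequality reduces the problem to checking that
\[ \rho^{2s}\int_0^\infty k_{t,s}(\rho)\,dt \,=\, 2^s \rho^{2s}\, L\Big(\tfrac{\rho^2}{4}, \tfrac{s+1}{2}, \tfrac{1-s}{2}\Big) \]
stays bounded uniformly in $\rho>0$ (the substitution $\coth t = 2z+1$, exactly as in Section 2, produces this identity). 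An elementary asymptotic analysis of the $L$-integral shows $\rho^{2s} L(\rho^2/4,\cdot,\cdot)\to 2^s\Gamma(s)$ as $\rho\to 0$ (using $L(a,b,c)\sim (2a)^{-s}\Gamma(s)$ near $a=0$) and $\to 0$ exponentially fast as $\rho\to\infty$, yielding the desired uniform bound.

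\textbf{Reverse direction --- killing the Whittaker-$M$ part.} Assume $\sup_{\rho>0} \|u(\cdot,\rho)\|_{L^p(\gamma)} \leq C$. By Theorem \ref{ch} we write $u(x,\rho) = S^1_\rho f(x) + S^2_\rho g(x)$ with $M_\gamma f$ tempered and $g \in \bigcap_{t>0} H^2_{\gamma,t}(\R^n)$. Since the normalised Hermite polynomials $H_\alpha$ belong to every $L^{p'}(\gamma)$, H\"older's inequality gives a uniform bound on the Fourier--Hermite coefficients
\[ |\tilde u(\alpha,\rho)| \,\leq\, \|u(\cdot,\rho)\|_{L^p(\gamma)}\,\|H_\alpha\|_{L^{p'}(\gamma)} \,\leq\, C_\alpha, \qquad \rho>0. \]
From the explicit expansion \ref{FHsol} this coefficient splits as $A(\alpha,\rho)\tilde f(\alpha) + B(\alpha,\rho)\tilde g(\alpha)$, where $A$ carries the factor $W_{-(|\alpha|+n/2),s/2}(\rho^2/2)$ and $B$ carries $M_{-(|\alpha|+n/2),s/2}(\rho^2/2)$. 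The standard large-argument asymptotics $W_{\kappa,\mu}(x)\sim e^{-x/2}x^\kappa$ and $M_{\kappa,\mu}(x)\sim \frac{\Gamma(1+2\mu)}{\Gamma(\frac12+\mu-\kappa)}e^{x/2} x^{-\kappa}$ give, as $\rho\to\infty$, $A(\alpha,\rho)\sim e^{-\rho^2/4}$ (exponential decay) while $B(\alpha,\rho)\sim e^{\rho^2/4}$ (exponential blow-up). Hence boundedness of $\tilde u(\alpha,\rho)$ in $\rho$ forces $\tilde g(\alpha) = 0$ for every $\alpha\in\N^n$; since $g \in L^2(\gamma)$, we conclude $g = 0$.

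\textbf{Placing $f$ in $L^p(\gamma)$.} Now $u = S^1_\rho f$, and the termwise convergence $(\rho^2/2)^{(s-1)/2}\Gamma(\tfrac{2k+n+s+1}{2})W_{-(k+n/2),s/2}(\rho^2/2)/\Gamma(s)\to 1$ as $\rho\to 0$, provided by Lemma \ref{wasmall}, shows that $u(\cdot,\rho)\to f$ in the sense of tempered distributions. For $1<p<\infty$, reflexivity of $L^p(\gamma)$ extracts a subsequence $u(\cdot,\rho_j)$ weakly converging to some $f^*\in L^p(\gamma)$; since weak convergence implies distributional convergence, $f = f^*$ and $f \in L^p(\gamma)$ with $\|f\|_{L^p(\gamma)} \leq \liminf_j \|u(\cdot,\rho_j)\|_{L^p(\gamma)} \leq C$. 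The endpoint $p=\infty$ is handled identically via Banach--Alaoglu weak-$\ast$ compactness, while $p=1$ requires a further tightness argument inside the space of finite signed measures to guarantee absolute continuity of the limit with respect to $\gamma$.

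The principal obstacle is the reverse direction. Theorem \ref{ch} is generous, permitting the auxiliary piece $S^2_\rho g$ to coexist with $S^1_\rho f$ under only tempered-growth hypotheses; the quantitative $L^p$-control must be leveraged against this freedom, and the decisive mechanism is the striking asymmetry between the exponentially decaying $W$-solution and the exponentially growing $M$-solution of the underlying Whittaker equation as $\rho\to\infty$.
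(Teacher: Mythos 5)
Your proof is correct and follows essentially the same route as the paper's: the forward direction via the integral representation, the contractivity of $e^{-tL}$ on $L^p(\gamma)$ and the uniform boundedness of $\rho^{2s}L(\tfrac{\rho^2}{4},\tfrac{1+s}{2},\tfrac{1-s}{2})$; the converse by killing the Whittaker-$M$ component through its exponential growth as $\rho\to\infty$ and then identifying $f$ as a weak limit of $u(\cdot,\rho_j)$ as $\rho_j\to 0$. The one cosmetic difference is that you control the coefficients $\tilde{u}(\alpha,\rho)$ by H\"older against $\|H_\alpha\|_{L^{p'}(\gamma)}$ (which degenerates at $p=1$, since Hermite polynomials are unbounded), whereas the paper invokes the standing hypothesis that $M_\gamma u$ is tempered in $\rho$ to get polynomial growth of $\tilde{u}(\alpha,\rho)$ in $\rho$, which serves the same purpose for every $p$.
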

  	\begin{proof}
  		Suppose $f\in L^p(\gamma)$ and let $u=S^1_{\rho}f$. Then as mentioned earlier
  		$$u(x,\rho)=\frac{4^{-s}}{\Gamma(s)}\rho^{2s}\int_{0}^{\infty}  k_{t,s}(\rho) e^{-tL}f(x ) dt.$$ Now since $e^{-tL}$ is a contraction semigroup on $L^p(\gamma)$ we have 
  		$$\|u(.,\rho)\|_{L^p(\gamma)}\leq \|f \|_{L^p(\gamma)}\frac{4^{-s}}{\Gamma(s)}\rho^{2s}\int_{0}^{\infty}  k_{t,s}(\rho)  dt$$
  		Proceeding similarly as before one can easily see that 
  		$$\int_{0}^{\infty}  k_{t,s}(\rho)  dt=2^{s+1}L\left(\frac{1}{4}\rho^2,\frac{1+s}{2}, \frac{1-s}{2}\right).$$ So we have  
  		$$\|u(.,\rho)\|_{L^p(\gamma)}\leq C_s\|f \|_{L^p(\gamma)}\rho^{2s}L\left(\frac{1}{4}\rho^2,\frac{1+s}{2}, \frac{1-s}{2}\right). $$ Now we make use of an estimate for L function, which can be found in \cite[Page-18]{RT3}, to get 
  		$$\|u(.,\rho)\|_{L^p(\gamma)}\leq C_s\|f \|_{L^p(\gamma)}\rho^{2s}\Gamma(s)\left(\rho^2/2\right)^{-s}e^{-\rho^2/4}=2\|f\|_{L^p(\gamma)}e^{-\rho^2/4}$$ which gives the required boundedness.

  		Conversely,  let $\sup_{\rho>0}\|u(.,\rho)\|_{L^p(\gamma)}\leq C.$ This condition  allows us to extract a subsequence $\rho_j$ along which $u(.,\rho)$ converges weakly to a function $f\in L^p(\gamma)$. Now letting $\rho  $ go to zero along $\rho_j$ from equation \ref{FHsol} we have 
  		\begin{align*} &\tilde{u}(\alpha,\rho) \left(\frac{1}{2}\rho^2\right)^{\frac{s-1}{2}} =\\
		& \left(\frac{\Gamma\left(\frac12(2|\alpha|+n+1+s)\right)}{\Gamma(s)}(f,H_{\alpha})_{L^2(\gamma)}W_{-(k+n/2),s/2}(\rho^2/2)+C_2(|\alpha|)M_{-(k+n/2),s/2}(\rho^2/2)\right).
		\end{align*}
  		Now as $\rho \rightarrow\infty$ we have 
  		$$  M_{-(k+n/2),s/2}(\rho^2/2)\sim \frac{\Gamma(1+2\mu)}{\Gamma(1/2+\mu+(k+n/2))}e^{\rho^2/4}\rho^{(2k+n)}.$$
		 But it is given that $\tilde{u}(\alpha,\rho)$ has polynomial growth in $\rho$ variable. So we must have $C_2(\alpha)=0$ and hence we are done. 
  	\end{proof}
  	
   Now we turn our attention to the holomorphic extendability of solutions of the extension problem under consideration. To motivate what we plan to do, we first recall a result about holomorphic extendability of solutions of the following extension problem for the Laplacian on $\R^n$:
$$\left( \Delta+\partial_{\rho}^2+\frac{1- s}{\rho}\partial_{\rho} \right)u(x,\rho)=0,~\ \ u(x,0)=f(x),~~\ \ x\in\R^n,\rho>0.$$   
 After the remarkable work of 
 Caffarelli and Silvestre \cite{CS}, this problem has been extensively studied in the literature. See e.g., the work of Stinga-Torrea \cite{ST}.  It is known that  for $f\in L^2(\R^n)$  the function  $u(x,\rho)=\rho^sf\ast\varphi_{s,\rho}(x)$ where $\varphi_{s,\rho}$ is the generalised poisson kernel given by $$\varphi_{s,\rho}(x)=\pi^{-n/2}\frac{\Gamma(\frac{n+s}{2})}{|\Gamma(s)|}(\rho^2+|x|^2)^{-\frac{n+s}{2}},~x\in \R^n$$ is a solution of the extension problem. Recently in \cite{RT3} authors proved that a necessary and sufficient condition for the solution of the above problem to be of the form $u(x,\rho)=\rho^sf\ast\varphi_{s,\rho}(x)$ for some $f\in L^2(\R^n)$  is that for every $\rho>0$, $u(.,\rho)$ extends holomorphically to the tube domain $\Omega_{ \rho } $ in $\C^n$, belongs to a weighted Bergman space $B_{s}(\Omega_{ \rho })$ and satisfies the uniform estimate $\|u(.,\rho)\|_{B_s}\leq C $ for all $\rho>0 $ where the norm $\|.\|_{B_s}$ is given by $$\|F\|_{B_s}^2:=\rho^{-n}\int_{\Omega_{ \rho }}|F(x+iy)|^2\left(1-\frac{|y|^2 }{\rho^2}\right)_{+}^{s-1}dxdy.$$ Our aim in the rest of this section is to prove an analogous result for the extension problem we considered for the Ornstein-Uhlenbeck operator $L$.   In order to do so,  we require the following Gutzmer's formula for the Hermite expansions. In order to state the same,  we need to introduce some more notations.

   Let $Sp(n,\R)$ denote the symplectic group consisting of $2n\times 2n$
real matrices which preserves the symplectic form $[(x,u),(y,v)]=(u.y-v.x)$ on $\R^{2n}$ with determinant 1. Suppose $K:=Sp(n,\R)\cap O(2n,\R)$ where $O(2n,\R)$ stands for the orthogonal group.  For a complex matrix $\sigma=a+ib$, it is known that $\sigma $ is unitary if and only if the matrix $\sigma_A:=\begin{pmatrix}
a & -b\\ b &a
\end{pmatrix}$ belongs to the group $K$ which yields a one to one correspondence between $K$ and the unitary group $U(n)$.  A proof of which can be found in Folland \cite{Fol}.  We let $\sigma.(x,u)$ stand for the action of $\sigma_A$ on $(x,u)$ which clearly has a natural extension to $\C^n\times \C^n$.  Also given $(x,u)\in \R^n\times \R^n$, let $\pi(x,u)$ be the unitary operator acting on $L^2(\R^n)$ defined by 
\begin{align*}
\pi(x,u)\phi(\xi)= e^{i(x.\xi+\frac12x.u)}\phi(\xi+u),~ \xi\in \R^n.
\end{align*}
Clearly for $(z,w)\in \C^n\times C^n$, $\pi(z,w)\phi(\xi)$ makes perfect sense as long as  $\phi$ is holomorphic.  Also note that Laguerre functions of type $(n-1)$, defined earlier in \ref{lagu}, can be considered as a function on $\R^n\times \R^n$ which can be holomorphically extended to $\C^n\times \C^n$ as follows:
$$\varphi_k(z,w):=L_k^{n-1}(\frac{1}{2}(z^2+w^2) ) e^{-\frac{1}{4}(z^2+w^2)},~z,w\in \C^n.$$
 We have the following very useful identity proved in Thangavelu\cite{T12}. 
\begin{thm}[Gutzmer's formula]
	For a holomorphic function $f$ on $\mathbb{C}^n,$ we have the following formula: 
	\[\int_{\R^n} \int_K |\pi (\sigma \cdot (z,w)) \, f(\xi)|^2 d \sigma d\xi=e^{(u\cdot y-v\cdot x)} \, \sum_{k=0}^{\infty} \frac{k!(n-1)!}{(k+n-1)!} \, \varphi_k(2iy,2iv) \, \|P_kf\|_2^2 \] 
	where  $z=x+iy,~~w=u+iv \in \mathbb{C}^n.$
\end{thm}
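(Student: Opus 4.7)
My plan is to prove the identity by reducing it, via the metaplectic intertwining relation, to a statement about the action of $K$ on each Hermite eigenspace. First, I would use the fact that, for $\sigma \in K \cong U(n)$, the Schr\"odinger representation satisfies the intertwining $\pi(\sigma \cdot (z,w)) = \mu(\sigma)\pi(z,w)\mu(\sigma)^{-1}$, where $\mu$ is the metaplectic representation of $K$ on $L^2(\R^n)$; this identity holds for real $(x,u)$ by a well-known computation and extends holomorphically in $(z,w)$ to $\C^{2n}$. Since $H$ commutes with $\mu(\sigma)$, the eigenspaces $W_k = P_k L^2(\R^n)$ are $\mu(\sigma)$-invariant, and in particular $\mu(\sigma)$ commutes with $P_k$.

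Next, for a holomorphic function $g$ on $\C^n$, a direct application of $\pi(z,w)g(\xi) = e^{i(z\cdot\xi+\frac12 z\cdot w)}g(\xi+w)$ followed by the substitution $\xi \mapsto \xi-u$ gives
\[\int_{\R^n}|\pi(z,w)g(\xi)|^2\,d\xi = e^{u\cdot y - v\cdot x}\int_{\R^n} e^{-2y\cdot\xi}|g(\xi+iv)|^2\,d\xi.\]
Applying this with $g = \mu(\sigma)^{-1}f$, the quantity to be computed becomes
\[e^{u\cdot y - v\cdot x}\int_K\int_{\R^n} e^{-2y\cdot\xi}\bigl|\mu(\sigma)^{-1}f(\xi+iv)\bigr|^2\,d\xi\,d\sigma.\]
Now I would expand $f=\sum_k P_k f$ and note that $\mu(\sigma)^{-1}P_k f \in W_k$. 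Expanding the modulus squared produces a double sum over indices $j,k$, and the cross terms $j\ne k$ vanish after the $K$-integration: the sesquilinear form $(g_k, g_j)\mapsto \int_K\int_{\R^n} e^{-2y\cdot\xi}\mu(\sigma)^{-1}g_k(\xi+iv)\overline{\mu(\sigma)^{-1}g_j(\xi+iv)}\,d\xi\,d\sigma$ is $K$-invariant and the decomposition $L^2(\R^n)=\bigoplus_k W_k$ is into pairwise inequivalent $K$-types (this is the Gelfand-pair structure of $(H_n\rtimes U(n), U(n))$).

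Finally, within the diagonal term indexed by $k$, I would invoke the fact that the $K$-averaged integrand is a $K$-biinvariant object on the Heisenberg side and hence coincides, up to a scalar, with the value of the Laguerre-type spherical function at $(2iy,2iv)$; this is exactly $\varphi_k(2iy,2iv)$. The overall scalar is then determined by $\|P_k f\|_2^2$ together with a combinatorial constant that I would pin down by testing the formula on a single Hermite function $\Phi_{(k,0,\ldots,0)}\in W_k$: comparing with the value $\varphi_k(0,0)=\binom{k+n-1}{k}$ yields the factor $\tfrac{k!(n-1)!}{(k+n-1)!}$.

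The main obstacle I anticipate is the rigorous justification of the holomorphic extension step together with the Gelfand-pair orthogonality for complex arguments: showing that $\mu(\sigma)^{-1}f$ extends holomorphically in $\xi$ whenever $f$ does, and that the Hermite expansion (and interchange of sum/integral) converges for complex $\xi+iv$, requires growth control on the Hermite coefficients $P_k f$. The holomorphy of $f$ on all of $\C^n$, together with the exponential decay of Laguerre functions $\varphi_k(2iy,2iv)$ in $k$ for fixed $y,v$, is exactly what makes both sides of the identity finite and the formal manipulations legitimate.
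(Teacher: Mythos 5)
The paper does not actually prove this theorem: it is quoted from Thangavelu's article \cite{T12} ("We have the following very useful identity proved in Thangavelu \cite{T12}"), so there is no internal proof to compare against. Your architecture --- the metaplectic intertwining $\pi(\sigma\cdot(z,w))=\mu(\sigma)\pi(z,w)\mu(\sigma)^{*}$, the explicit computation $\int_{\R^n}|\pi(z,w)g(\xi)|^2d\xi = e^{u\cdot y-v\cdot x}\int_{\R^n} e^{-2y\cdot\xi}|g(\xi+iv)|^2d\xi$ (which does produce the correct prefactor), the vanishing of cross terms because the eigenspaces $W_k$ are pairwise inequivalent irreducible $K$-types, and the reduction of the diagonal term to a Laguerre spherical function --- is the standard route and is faithful in spirit to the cited proof.

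The genuine gap is the diagonal term. Schur's lemma does give that your $K$-invariant quadratic form restricted to $W_k$ equals $c_k(y,v)\,\|P_kf\|_2^2$ for some function $c_k$ independent of $f$; but you then \emph{assert} $c_k(y,v)=\tfrac{k!(n-1)!}{(k+n-1)!}\varphi_k(2iy,2iv)$ and propose to fix "the constant" by testing on one Hermite function. A single evaluation (e.g.\ at $(y,v)=(0,0)$, where both sides are $1$) only normalizes a multiplicative constant \emph{after} the functional form $c_k(y,v)\propto\varphi_k(2iy,2iv)$ is known, and that is precisely the nontrivial content of the theorem: since $|\pi(z,w)h|^2$ is not holomorphic, the $K$-average is not literally a value of the spherical function at a complexified group element, and the appearance of the argument $(2iy,2iv)$ has to be extracted by polarization --- continue $\langle\pi(z,w)h,\pi(z',w')h\rangle$ holomorphically in $(z,w)$ and antiholomorphically in $(z',w')$, restrict to the diagonal, and use $\int_K\pi(\sigma\cdot g)\,d\sigma\big|_{W_k}=\tfrac{k!(n-1)!}{(k+n-1)!}\varphi_k(g)\,I$ --- or, alternatively, by summing your averaged form over an orthonormal basis of $W_k$ and reducing the identification of $c_k$ to an explicit Laguerre integral via the $\mu(\sigma)$-invariant kernel $\sum_{|\alpha|=k}\Phi_\alpha(\zeta)\Phi_\alpha(\zeta')$. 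Either of these completes the argument, but as written your outline leaves the step that actually produces $\varphi_k(2iy,2iv)$ (together with the convergence issues you candidly flag) as an unproved appeal to the Gelfand-pair structure.
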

   We use this to prove the following result:
 	\begin{prop}
 		\label{identity}
 	 Let $\delta>0$. 	For a holomorphic function $F$ on $\Omega_t$ we have the following identity
 		\[\int_{\R^n} \int_{|y|<t} |F(x+iy)|^2 w_t^{\delta}(x,y) \, dx dy=C_n \sum_{k=0}^{\infty} \|P_kf\|_2^2    \frac{\Gamma(k+1) \Gamma(n+\delta)}{\Gamma(k+n+\delta)}   \, L_k^{n+\delta-1}(-2t^2) t^{2n},\]
 		where $f$ denotes the restriction of $F$ to $\R^n$ and the weight $w_t^{\delta} >  0$ is given by
 		\[w_t^{\delta}(x,y) =\frac{1}{\Gamma(\delta)}  \int_{\R^n} e^{-2ux}  \left( 1- \frac{|u|^2+|y|^2}{t^2}\right)_{+}^{\delta-1} \, e^{-(|u|^2+|y|^2)} \, du .\]
 		
 		\begin{proof}
 			Let $F$ be holomorphic  in the  tube domain $\Omega_t=\{z=x+iy:|y|<t\}$  of $\mathbb{C}^n$.  Now since the Lebesgue measure is rotation invariant,  $\left( 1- \frac{|u|^2+|y|^2}{t^2}\right)_{+}^{\delta-1} \, e^{-(|u|^2+|y|^2)} \, dy du$ is a rotation invariant measure. So, using Gutzmer's formula, we have
 			\begin{align}
 			\label{formula}
 			&  \int_{\R^{2n}} \left( \int_{\R^n}|\pi(iy,iv) F(\xi)|^2 d\xi\right)  \left( 1- \frac{|u|^2+|y|^2}{t^2}\right)_{+}^{\delta-1} \, e^{-(|u|^2+|y|^2)} \, dy du\nonumber\\
 			&=c_n   \sum_{k=0}^{\infty} \|P_kf\|_2^2  \frac{k!(n-1)!}{(k+n-1)!}  \, \int_{\R^{2n}} \varphi_k(2iy,2iu) \left( 1- \frac{|u|^2+|y|^2}{t^2}\right)_{+}^{\delta-1} \, e^{-(|u|^2+|y|^2)} \, dy du.
 			\end{align}
			Integrating in polar coordinates,  the integral on the right hand side becomes 
 			\begin{align*}
 			&\int_{\R^{2n}} \varphi_k(2iy,2iu) \left( 1- \frac{|u|^2+|y|^2}{t^2}\right)_{+}^{\delta-1} \, e^{-(|u|^2+|y|^2)} \, dy du
 			 \\
			 &\,\,\,\,=\omega_{2n} \int_0^{\infty} L_k^{n-1}(-2r^2) \, \left( 1- \frac{r^2}{t^2}\right)_{+}^{\delta-1}  \, r^{2n-1} dr.
 			\end{align*}
 			Now using a change of variable $r \rightarrow rt$ followed by another change of variable $r \rightarrow \sqrt{r}$ in the  integral in the RHS of the above equation, we have   
 			\begin{align*}
 			&  \int_0^{\infty} L_k^{n-1}(-2r^2) \, \left( 1- \frac{r^2}{t^2}\right)_{+}^{\delta-1}  \, r^{2n-1} dr= \frac{1}{2}t^{2n} \int_0^{1} L_k^{n-1}(r(-2 t^2)) \, \left( 1- r\right)^{\delta-1}  \, r^{n-1} dr .
 			\end{align*}
 		By making  use of the following identity  (see \cite{SG})
 			\[L_k^{\alpha}(t)=\frac{\Gamma(k+\alpha+1) }{\Gamma(\alpha-\beta)\Gamma(k+\beta+1)} \int_0^1(1-r)^{\alpha-\beta-1} \, r^{\beta} \, L_k^{\beta}(rt) \, dr\]  
			the above yields
 			 \begin{align}
 			   &\frac{1}{\Gamma(\delta)}\int_{\R^{2n}} \varphi_k(2iy,2iu) \left( 1- \frac{|u|^2+|y|^2}{t^2}\right)_{+}^{\delta-1} \, e^{-(|u|^2+|y|^2)} \, dy du\nonumber\\&=\frac{1}{2}t^{2n} \omega_{2n}  \frac{  \Gamma(k+n) }{\Gamma(k+n+\delta)} \,  L_k^{n+\delta-1}(-2 t^2).
 			 \end{align}
 			 Now simplify the LHS of the equation \ref{formula}:
 			\begin{align*}
 			& \frac{1}{\Gamma(\delta)} \int_{\R^{2n}} \left( \int_{\R^n}|\pi(iy,iv) F(\xi)|^2 d\xi\right)  \left( 1- \frac{|u|^2+|y|^2}{t^2}\right)_{+}^{\delta-1} \, e^{-(|u|^2+|y|^2)} \, dy du\\
 			&= \frac{1}{\Gamma(\delta)} \int_{\R^{2n}} \left( \int_{\R^n}|e^{i(iy\cdot \xi+\frac{1}{2}iy\cdot iu)}F(\xi+iu)|^2 d\xi\right)  \left( 1- \frac{|u|^2+|y|^2}{t^2}\right)_{+}^{\delta-1} \, e^{-(|u|^2+|y|^2)} \, dy du\\
 			&= \frac{1}{\Gamma(\delta)} \int_{\R^{2n}} \left( \int_{\R^n}|e^{-2y\cdot \xi} \, F(\xi+iu)|^2 d\xi\right)  \left( 1- \frac{|u|^2+|y|^2}{t^2}\right)_{+}^{\delta-1} \, e^{-(|u|^2+|y|^2)} \, dy du\\
 			&= \int_{\R^{2n}}  |F(\xi+iu)|^2  \left( \frac{1}{\Gamma(\delta)} \int_{\R^n} e^{-2y\cdot \xi}  \left( 1- \frac{|u|^2+|y|^2}{t^2}\right)_{+}^{\delta-1} \, e^{-(|u|^2+|y|^2)} \, dy\right)d \xi \, du\\
 			&=\int_{\R^{2n}}  |F(\xi+iu)|^2 w_t^{\delta}(u,\xi) \, d \xi \, du.
 			\end{align*}
 			Now, we see that  when $|u|\geq t,$  $\left( 1- \frac{|u|^2+|y|^2}{t^2}\right)_{+}^{\delta-1} =0 $ for all $y\in \R^n.$ Thus,
 			\[\int_{\R^{2n}}  |F(\xi+iu)|^2 w_t^{\delta}(u,\xi) \, d \xi \, du=\int_{\R^{n}} \int_{|u|<t}  |F(\xi+iu)|^2 w_t^{\delta}(u,\xi) \, d \xi \, du.\]
 			Finally, we have
 			\[\int_{\R^{n}} \int_{|u|<t}  |F(\xi+iu)|^2 w_t^{\delta}(u,\xi) \, d \xi \, du =c_n \sum_{k=0}^{\infty} \|P_kf\|_2^2  \frac{\Gamma(k+1) \Gamma(n+\delta)}{\Gamma(k+n+\delta)} \, L_k^{n+\delta-1}(-2t^2) t^{2n}.\]
 			This completes the proof of the theorem.
 		\end{proof}
 		
 	\end{prop}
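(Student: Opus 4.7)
The plan is to apply the Gutzmer formula stated just above the proposition, with the specific choice $z=iy$ and $w=iv$, and then integrate both sides of the resulting identity against the measure $(1-(|y|^2+|v|^2)/t^2)_+^{\delta-1}e^{-(|y|^2+|v|^2)}\,dy\,dv$ on $\R^n\times\R^n$. Because this weight depends only on $|y|^2+|v|^2$, it is invariant under the action of the compact group $K$, so the $K$-averaging built into Gutzmer's formula can be removed at no cost after Fubini. Dividing through by $\Gamma(\delta)$ will then match the normalisation of $w_t^{\delta}$.

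On the right-hand side, the holomorphic extension of the Laguerre function gives $\varphi_k(2iy,2iv)=L_k^{n-1}(-2(|y|^2+|v|^2))\,e^{|y|^2+|v|^2}$, and this exponential exactly cancels the Gaussian factor in the weight. What remains for each term in the sum is $\int_{\R^{2n}}L_k^{n-1}(-2r^2)(1-r^2/t^2)_+^{\delta-1}\,dy\,dv$ with $r^2=|y|^2+|v|^2$. Passing to polar coordinates in $\R^{2n}$ and then substituting $r\mapsto rt$ followed by $r\mapsto\sqrt{r}$ reduces this to a one-dimensional integral of the form $\int_0^1 L_k^{n-1}(-2t^2 r)(1-r)^{\delta-1}r^{n-1}\,dr$. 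Invoking the classical Laguerre identity $L_k^{\alpha}(z)=\frac{\Gamma(k+\alpha+1)}{\Gamma(\alpha-\beta)\Gamma(k+\beta+1)}\int_0^1(1-r)^{\alpha-\beta-1}r^{\beta}L_k^{\beta}(rz)\,dr$ with $\beta=n-1$ and $\alpha=n+\delta-1$ converts the integral into a single $L_k^{n+\delta-1}(-2t^2)$ and produces the stated Gamma prefactor $\Gamma(k+1)\Gamma(n+\delta)/\Gamma(k+n+\delta)$.

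For the left-hand side, I would unpack the definition of $\pi$. Since $iy\cdot iv=-y\cdot v$ is real, one has $\pi(iy,iv)F(\xi)=e^{-y\cdot\xi-\tfrac{i}{2}y\cdot v}F(\xi+iv)$, and hence $|\pi(iy,iv)F(\xi)|^2 = e^{-2y\cdot\xi}\,|F(\xi+iv)|^2$: the imaginary part of the exponent contributes only a unit-modulus phase. Substituting and invoking Fubini, the $y$-integration decouples from $|F(\xi+iv)|^2$, and the inner bracket is exactly the defining integral of $w_t^{\delta}(\xi,v)$ (with the dummy $y$ playing the role of the integration variable $u$ in the definition of $w_t^\delta$ and $v$ playing the role of its second slot). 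Since the cut-off $(1-(|y|^2+|v|^2)/t^2)_+^{\delta-1}$ forces $|v|<t$, the $v$-integral is automatically restricted to the ball of radius $t$, reproducing precisely the left-hand side of the claimed identity.

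The main delicate point is the opening $K$-invariance argument: one must verify that integrating $|\pi(iy,iv)F(\xi)|^2$ against a radial weight in $(y,v)\in\R^{2n}$ equals the corresponding integral of the $K$-average, so that Gutzmer's formula applies without spurious constants. This relies on the fact that $K\subset O(2n,\R)$ acts orthogonally on $(y,v)$ and that the weight depends only on $|y|^2+|v|^2$. Once this reduction is secured, the remaining work is routine: a polar calculation, a known Laguerre integral identity, and a clean identification of the weighted integral on the left with $w_t^{\delta}$.
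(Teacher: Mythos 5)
Your proposal is correct and follows essentially the same route as the paper's own proof: specialize Gutzmer's formula to purely imaginary arguments, integrate against the radial weight and use the $K$-invariance of that measure to drop the $\sigma$-average, evaluate the right-hand side by polar coordinates plus the classical Laguerre integral identity with $\beta=n-1$, $\alpha=n+\delta-1$, and unpack $\pi(iy,iv)$ on the left to recognize $w_t^{\delta}$ after Fubini. All the details you flag (the cancellation of $e^{|y|^2+|v|^2}$ against the Gaussian, the unit-modulus phase in $\pi(iy,iv)F$, and the support restriction to $|v|<t$) match the paper's computation.
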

 
   For $s>0$ we consider the following positive weight function $  \tilde{w}_{\rho}(k)$ on $\mathbb{N}$ given by the sequence
  $$
  \left(\frac12\rho^2\right)^{s-1}\left(\Gamma\left(\frac12(2k+n+1+s)\right)W_{-(k+n/2),s/2}(\frac{1}{2}\rho^2)\right)^2\frac{\Gamma(k+1) \Gamma(n+2s)}{\Gamma(k+n+2s)} L^{n+2s-1}_k(-\frac12\rho^2) . $$
 We define $W^s_{\rho}(\R^n)$ to be the space of all tempered distribution $f$ for which 
  $$\|f\|_{s,\rho}^2:=\sum_{k=0}^{\infty}\tilde{w}_{\rho}(k)\|P_kf\|_2^2<\infty.$$ 
  
  \begin{rem}
  	For $r<0$ the following asymptotic property of Laguerre function is well known (see\cite[Theorem 8.22.3]{SG}) and is valid for large $k$ and for $r\leq -c,~c>0$ 
  	\begin{equation}
  	\label{lag}
  	L^{\alpha}_k(r)=\frac{1}{2\sqrt{\pi}}e^{r/2}(-r)^{-\alpha/2-1/4}k^{\alpha/2-1/4}e^{2(-kr)^{\frac12}}(1+O(k^{-1/2})).
  	\end{equation}
  	Also the asymptotic property \ref{w} and \ref{zeta} together gives 
  	$$\left(\frac12\rho^2\right)^{s-1}\left(\Gamma\left(\frac12(2k+n+1+s)\right)W_{-(k+n/2),s/2}(\frac{1}{2}\rho^2)\right)^2\leq c_1  (\rho\sqrt{2k+n})^{2s-1}e^{-\rho\sqrt{2k+n}} $$ and from \ref{lag} we have  
	$$L^{n+2s-1}_k(-\rho^2/2)\leq c e^{\rho^2/4} \rho^{-n-2s+\frac12}(2k+n)^{\frac{n+2s-1}{2}-\frac14}e^{\rho\sqrt{2k+n}}. $$ Now using the fact   that $\frac{\Gamma(k+1) \Gamma(n+2s)}{\Gamma(k+n+2s)} \sim (2k+n)^{-(n+2s-1)}$  we  have $$\tilde{w}_{\rho}(k)\leq c_1e^{\rho^2/4}  (\rho^2(2k+n))^{-\frac{2n+1}{4}}.$$	
  	On the other hand, using \ref{w} and \ref{lag}, for large $k$, we have 
  	$$\tilde{w}_{\rho}(k)\geq c_2 e^{\rho^2/4} (\rho^2(2k+n))^{-\frac{2n+1}{4}} e^{-\psi_{\rho}(k)} $$ where $\psi_{\rho}(k)=4(2k+n) \zeta\left(\frac{\rho^2}{4(2k+n)}\right)^{\frac{1}{2}}-\rho\sqrt{2k}$. It can be checked that for $0<\rho\leq 1$, this function $ \psi_{\rho}(k)$ is decreasing in $k$ whence $\psi_{\rho}(k)\leq c$ for some constant $c $ depending on $\rho$.  So finally we have  
  	\begin{equation}
  	\label{weight}
  	c_2e^{\rho^2/4} (\rho^2(2k+n))^{-\frac{2n+1}{4}} \leq \tilde{w}_{\rho}(k)\leq c_1 e^{\rho^2/4} (\rho^2(2k+n))^{-\frac{2n+1}{4}}. 
  	\end{equation} 
  	  By letting  $m_n=- \frac{1}{8}(2n+1)$, we clearly  see that    $f\in W^s_{\rho}(\R^n)$ if and only if $f\in W^{m_n}_H(\R^n)$ whenever $0<\rho\leq1.$  Here $W^m_H(\R^n)$  denotes the Hermite sobolev spaces.
  	 
  \end{rem}

  In view of the connection between the operators $H$ and $L$, to prove Theorem \ref{hLLL} it suffices to prove the following characterisation for the solution of the extension problem for $H$. Note that the extension problem for the Hermite operator $H$ we are talking about reads as 
  	$$  \Big( -H + \partial_\rho^2 + \frac{1-2s}{\rho} \partial_\rho - \frac{1}{4}\rho^2 \Big) u(x,\rho) = 0,\,\, u(x,0) = f(x) .$$
  	  For $\rho>0$ let $T_{\rho}$ stand for the operator defined for  reasonable  $f$ by  
  	  $$T_{\rho}f(x):=(\frac{1}{2}\rho^2)^{\frac{s-1}{2}}\frac{1}{\Gamma(s)}\sum_{k=0}^{\infty}\Gamma\left(\frac12(2k+n+s+1)\right)W_{-(k+n/2),s/2}( \rho^2/2)P_k f(x).$$ Using similar reasoning as in the case of $L$, we point out that for a tempered distribution $f$,  the above expression makes sense and solves the extension problem for $H$. Moreover, in view of the relation $Q_k=M_{\gamma}^{-1}P_kM_{\gamma}$, we have $T_{\rho}f=M_{\gamma}^{-1}S^1_{\rho}M_{\gamma}^{-1}f.$ Thus, Theorem \ref{hLLL} easily follows from the following theorem.   
  \begin{thm}
  	A solution of the extension problem for $H$ is of the form $u(x,\rho)=T_{\rho}f(x)$ for some   $ f\in W^{m_n}_{H}(\R^n)$ if and only if for every $\rho>0$, $ u(.,\rho)$ extends holomorphically to $\Omega_{\rho/2}$ and satisfies the estimate  
  	\begin{equation}
  	\label{uest}
  	\int_{\Omega_{ \rho/2}}  | u(z,\rho)|^2  w^{2s}_{ \rho/2}(z) dz \leq C  \rho^{n-1/2 }    .
  	\end{equation} for all $0<\rho\leq1.$
  \end{thm}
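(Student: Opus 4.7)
The plan is to reduce both implications to Proposition \ref{identity}, applied at $t=\rho/2$ and $\delta=2s$, combined with the two-sided asymptotic \ref{weight} for the weight sequence $\tilde w_\rho(k)$. Writing
\[
c_k(\rho):=\frac{(\tfrac{1}{2}\rho^2)^{(s-1)/2}}{\Gamma(s)}\Gamma\!\bigl(\tfrac{1}{2}(2k+n+s+1)\bigr)W_{-(k+n/2),s/2}(\tfrac{1}{2}\rho^2),
\]
one checks directly from the definitions that
\[
c_k(\rho)^2\,\frac{\Gamma(k+1)\Gamma(n+2s)}{\Gamma(k+n+2s)}\,L_k^{n+2s-1}(-\rho^2/2)=\Gamma(s)^{-2}\,\tilde w_\rho(k);
\]
this is the key algebraic identity that couples the weighted Bergman norm on $\Omega_{\rho/2}$ to the $\ell^2$-sum defining $\|\cdot\|_{s,\rho}$.

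For the forward implication, take $f\in W^{m_n}_H(\R^n)$ and set $u(\cdot,\rho):=T_\rho f$. Lemma \ref{Walarge} gives exponential decay of $c_k(\rho)$ in $k$, which overwhelms the polynomial growth of $\|P_kf\|_2$ allowed by the Sobolev hypothesis, so the series converges in $L^2(\Omega_{\rho/2},w^{2s}_{\rho/2})$ and produces a holomorphic extension. Since $P_ku(\cdot,\rho)=c_k(\rho)P_kf$, Proposition \ref{identity} then gives
\[
\int_{\Omega_{\rho/2}}|u(z,\rho)|^2\,w^{2s}_{\rho/2}(z)\,dz = \frac{C_n(\rho/2)^{2n}}{\Gamma(s)^2}\sum_{k\geq 0}\tilde w_\rho(k)\,\|P_kf\|_2^2,
\]
and the upper half of \ref{weight} bounds the right-hand side by $C\,\rho^{2n}\cdot \rho^{-(2n+1)/2}\|f\|_{W^{m_n}_H}^2 = C\,\rho^{n-1/2}\|f\|_{W^{m_n}_H}^2$ for $0<\rho\leq 1$.

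For the converse, the Hermite analogue of Theorem \ref{ch} expands every admissible solution as
\[
\tilde u(\alpha,\rho)=(\tfrac{1}{2}\rho^2)^{(s-1)/2}\bigl(C_1(|\alpha|)W_{-(k+n/2),s/2}(\tfrac{1}{2}\rho^2)+C_2(|\alpha|)M_{-(k+n/2),s/2}(\tfrac{1}{2}\rho^2)\bigr),
\]
with $k=|\alpha|$. Reading the uniform bound \ref{uest} through Proposition \ref{identity}, and separating the two Whittaker branches via the asymptotics of Lemma \ref{Walarge} and the Laguerre estimate \ref{lag}, forces $C_2(|\alpha|)\equiv 0$. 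The remaining solution is $u=T_\rho f$ with $\tilde f(\alpha)=\Gamma(s)C_1(|\alpha|)/\Gamma(\tfrac{1}{2}(2k+n+s+1))$, and running the forward identity in reverse and invoking the lower half of \ref{weight} converts the hypothesis into
\[
\|f\|_{W^{m_n}_H}^2 \leq C\,\rho^{(2n+1)/2}\,\|f\|_{s,\rho}^2 \leq C\,\rho^{(2n+1)/2}\cdot \rho^{-n-1/2}=C
\]
uniformly for small $\rho$, hence $f\in W^{m_n}_H(\R^n)$.

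The main obstacle I anticipate is the vanishing of the $M$-branch in the converse: a direct estimate shows that even a pure $M$-branch solution with suitably decaying coefficients $C_2(|\alpha|)$ produces an integral of size $\rho^{n-1/2}$ with exactly the same $\rho$-scaling as the $W$-branch, so the uniform bound alone does not obviously force $C_2\equiv 0$. To overcome this one must combine the uniform bound with the distinct $\rho\to 0$ behaviours of the two Whittaker branches (the $W$-branch approaches a nonzero constant while the $M$-branch vanishes like $\rho^{2s}$) and use orthogonality within each finite-dimensional Hermite eigenspace to handle the $W$-$M$ cross-terms.
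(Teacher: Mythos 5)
Your forward direction is exactly the paper's: the algebraic identity
$c_k(\rho)^2\,\tfrac{\Gamma(k+1)\Gamma(n+2s)}{\Gamma(k+n+2s)}\,L_k^{n+2s-1}(-\rho^2/2)=\Gamma(s)^{-2}\tilde w_\rho(k)$
is precisely the computation behind the paper's line
$\int_{\Omega_{\rho/2}}|u(z,\rho)|^2w^{2s}_{\rho/2}(z)\,dz=c_n\rho^{2n}\sum_k\tilde w_\rho(k)\|P_kf\|_2^2$, and the upper half of \ref{weight} then gives the bound $C\rho^{n-1/2}\|f\|^2_{W^{m_n}_H}$. (The paper additionally justifies holomorphy via the pointwise bound on $\Phi_k(z,\bar z)$ rather than only $L^2$ convergence, but that is cosmetic.) The converse also follows the paper's skeleton with a minor reorganization: the paper first defines $g_\rho$ by \ref{grho}, uses the lower half of \ref{weight} together with Proposition \ref{identity} to get $\|g_\rho\|_{W^{m_n}_H}\le C$ uniformly for $0<\rho\le1$, extracts a weak limit $f$ along $\rho_m\to0$ by Banach--Alaoglu, and identifies $f$ as the boundary datum using \ref{wasmall}; you instead read $f$ off directly from $C_1(|\alpha|)$. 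Either ordering is fine and uses the same ingredients.

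The step you flag, forcing $C_2\equiv0$, is indeed the crux, and your instinct that it is the weak point is correct --- but your sketched repair goes in the wrong direction. The paper does \emph{not} use the $\rho\to0$ behaviour of the two Whittaker branches here; it argues for \emph{fixed} $\rho$ in the large-$k$ regime: reading \ref{uest} through Proposition \ref{identity} gives $\sum_k\bigl(C_2(k)M_{-(k+n/2),s/2}(\rho^2/2)\bigr)^2L_k^{n+2s-1}(-\rho^2/2)\le C(\rho)$ (the $W$-branch and the cross-term are harmless because $W$ decays exponentially in $k$ by \ref{w}), and since both $M$ and $L_k^{n+2s-1}(-\rho^2/2)$ grow like $e^{c\rho\sqrt{2k+n}}$ by \ref{m} and \ref{lag}, the coefficients $C_2(k)$ must vanish. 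Your alternative --- distinguishing the branches by their $\rho\to0$ behaviour --- cannot work on its own: by \ref{wasmall} a single nonzero $C_2(k_0)$ contributes only $O(\rho^{2n+4s})$ to the left side of \ref{uest}, which is $o(\rho^{n-1/2})$, so the small-$\rho$ bound is blind to a finitely supported $M$-branch. You should adopt the paper's fixed-$\rho$, large-$k$ argument instead; be aware, though, that even that argument literally yields only super-exponential decay of $C_2(k)$ in $\sqrt{k}$ rather than $C_2(k)=0$ for each individual $k$, so if you want the statement exactly as written you will need to supplement it (e.g.\ by imposing the Bergman bound for all $\rho>0$, under which the fixed-$k$, large-$\rho$ growth of $M$ kills each coefficient separately).
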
 
  
  \begin{proof}
  	 First suppose $u(x,\rho)=T_{\rho}f(x)$ for some $f$ such that $ f\in W^{m(s)}_{H}(\R^n)$. So clearly 
  	 $$	 u(x,\rho)= \frac{(\frac{1}{2}\rho^2)^{\frac{s-1}{2}}}{\Gamma(s)}\sum_{k=0}^{\infty}\Gamma\left(\frac12(2k+n+s+1)\right)W_{-(k+n/2),s/2}( \rho^2/2)P_k f(x).$$ But the Hermite functions $\Phi_{ \alpha}(x)=H_{\alpha}(x)e^{-|x|^2/2}$ has holomorphic extension to $\C^n.$  Let $\Phi_k(z,w):=\sum_{|\alpha|=k}\Phi_{ \alpha}(z)\Phi_{ \alpha}(w)$. Then using the following estimate (see \cite{T1}) $$|\Phi_k(z,\bar{z})|\leq C(y)(2k+n)^{\frac34(n-1)}e^{2(2k+n)^{\frac12}|y|}$$
  	 along with the asymptotic property \ref{w} we conclude that the series $$\sum_{k=0}^{\infty}\Gamma\left(\frac12(2k+n+s+1)\right)W_{-(k+n/2),s/2}( \rho^2/2)P_k f(z)$$ converges uniformly over compact subsets of $\Omega_{\rho/2}$ and hence defines a holomorphic function in the domain $\Omega_{\rho/2}.$  Now noting that $$\|P_k  u(.,\rho)\|_2^2=\rho^{2s-2}c_s^2\left( \Gamma\left(\frac12(2k+n+s+1)\right)W_{-(k+n/2),s/2}( \rho^2/2)\right)^2\|P_k f\|_2^2,$$ in view of the Proposition \ref{identity} we obtain
  	 $$ \int_{\R^n} \int_{|y|<\rho/2} |u(x+iy,\rho)|^2 w_{\rho/2}^{2s}(x,y) \, dx dy=c_n \rho^{2n} \sum_{k=0}^{\infty} \tilde{w}_{\rho}(k)\|P_kf\|_2^2 $$ 
  	 But in view of \ref{weight}, $$\| f\|_{s,\rho}^2\leq C  e^{\rho^2/4} \rho^{-(2n+1)/2 }\sum_{k=0}^{\infty}(2k+n)^{2m_n}\|P_kf\|_2^2$$ which gives 
  	 $$  \int_{\R^n} \int_{|y|<\rho/2} |u(x+iy,\rho)|^2 w_{\rho/2}^{2s}(x,y) \, dx dy \leq Ce^{\rho^2/4} \rho^{n-1/2 }\|f\|^2_{W^{m_n}_H}$$   proving the first part of the theorem.
  	 
  	 Conversely, let $u(z,\rho) $ be holomorphic on $\Omega_{\rho/2}$ for every $\rho>0 $ satisfying the estimate \ref{uest}.   Let $g_{\rho}$ be a tempered distribution such that
  	 \begin{equation}
  	 \label{grho}
  	 P_ku(.,\rho)=\left(\frac12\rho^2\right)^{\frac{s-1}{2}}\Gamma\left(\frac12(2k+n+s+1)\right)W_{-(k+n/2),s/2}( \rho^2/2)P_kg_{\rho}.
  	 \end{equation}   Now for $0<\rho\leq 1$, using \ref{weight} we have 
  	 $$\|g_{\rho}\|^2_{W^{m_n}_H} \leq C e^{-\rho^2/4}\rho^{ \frac{2n+1}{2}}  \sum_{k=0}^{\infty}\tilde{w}_{\rho}(k)\|P_kg_{\rho}\|_2^2$$ 
  	  Note that using the Proposition \ref{identity}  we obtain 
  	 $$ \int_{\Omega_{\rho/2}}  | u(z,\rho)|^2  w^{2s}_{ \rho/2}(z) dz=c_n\rho^{2n}\sum_{k=0}^{\infty}\tilde{w}_{\rho}(k)\|P_kg_{\rho}\|_2^2. $$     which by the hypothesis yields   $\|g_{\rho}\|^2_{W^{m_n}_H}\leq C$  for all   $0<\rho\leq 1.$ Now by Banach-Alaoglu theorem, we choose a sequence $\{\rho_m\}$ going to $0$ such that $g_{\rho_m}$ converges weakly in $W^{m_n}_H(\R^n)$ as $k\rightarrow \infty$.   Let $f$ be the weak limit in this case.   Now given $\varphi\in S(\R^n)$, we have 
   \begin{align*}
  	  \int_{\R^n}u(x,\rho_m)\varphi(x)dx =\sum_{ k=0  }^{\infty}\int_{\R^n} P_k u(x,\rho_m) \overline{P_k\varphi(x) }dx . 
  	 \end{align*}
  	 But using \ref{grho}, the above integral equals to 
  	 $$ \sum_{k=0 }^{\infty} \left(\frac12\rho_m^2\right)^{\frac{s-1}{2}}\Gamma\left(\frac12(2|\alpha|+n+s+1)\right)W_{-(|\alpha|+n/2),s/2}( \rho_m^2/2) \int_{\R^n} P_kg_{\rho_m}(x) (x) \overline{P_k\varphi(x) }dx$$
  This allows us  to conclude that $u(.,\rho_m)$ converges to $f$ in the sense of distribution. 
  	 Now under the assumption that $u$ solves the extension problem for $H$, the exact same argument as in the beginning of this subsection gives 
  	 $$\widehat{u}(\alpha,\rho)= (\frac{1}{2}\rho^2)^{\frac{s-1}{2}} \left(C_1(|\alpha|)W_{-(|\alpha|+n/2),s/2}(\frac{1}{2}\rho^2)+C_2(|\alpha|)M_{-(|\alpha|+n/2),s/2}(\frac{1}{2}\rho^2)\right) $$ where $\widehat{u}(\alpha,\rho)$ denote the Hermite coefficients. 
  	 But the estimate \ref{uest} gives 
  	 $$\sum_{k=0}^{\infty}\left(C_2(k)M_{-(k+n/2),s/2}(\rho^2/2)\right)^2L^{n+2s-1}_k(- \rho^2/2) \leq C(\rho).$$ But since both   $M_{-(k+n/2),s/2}(\rho^2/2)$ and $L^{n+2s-1}_k(-\rho^2/2)$  have exponential growth in $ k $ (see \ref{m} and \ref{lag})   , the above inequality forces  $C_2(k)$ to be zero .  Now as $u(,\rho_m)$ converges to $f$  and as $\rho_m$ tends to zero  $(\frac{1}{2}\rho_m^2)^{\frac{s-1}{2}}  W_{-(k+n/2),s/2}(\frac{1}{2}\rho_m^2)$ goes to a constant $\Gamma(s)/\Gamma\left(\frac12(2k+n+s+1)\right)$ (see \ref{wasmall}) the theorem follows.  
 \end{proof}

  \section{ Trace Hardy and Hardy's inequality}
  
  \subsection{Trace Hardy inequality} We prove the following trace Hardy inequality only for the operator $ U $ as the case of $ L $ is similar.
  We shall work with the following gradient on $\mathbb{R}^n\times[0,\infty)$  defined by 
  $$ \nabla_{U}u:=\left( 2^{-1/2}\partial_1u, 2^{-1/2} \partial_2u,..., 2^{-1/2}\partial_nu, \partial_\rho u\right).$$
  We	also let $ P_s(\partial_x,\partial_\rho) = \big( -U+\partial_{\rho}^2+\frac{1-2s}{\rho} \partial_{\rho}-\frac{1}{4}\rho^2 \big) $ stand for the extension operator.
  
  \begin{lem} \label{L2.1}
  	Let $u$ and $v$ be two  real valued functions on $\mathbb{R}^n\times[0,\infty) $ such that $u,v\in C^2_{0}\left([0,\infty) , C^2(\mathbb{R}^n)\right)$.  Then for $0<s<1$ we have 
  	\begin{align}\label{2.2}
  	&\int_{0}^{\infty}\int_{\mathbb{R}^n}\left|\nabla_Uu(x,\rho)-\frac{u(x,\rho)}{v(x,\rho)}\nabla_{U}v(x,\rho)\right|^2\rho^{1-2s}d\gamma(x)d\rho\\
  	&=\int_{0}^{\infty}\int_{\mathbb{R}^n}\left(\left|\nabla_Uu(x,\rho)\right|^2+\left(\frac{n}{2}+\frac{1}{4}\rho^2\right)u(x,\rho)^2\right)\rho^{1-2s}d\gamma(x)d\rho \notag\\
  	&+\int_{0}^{\infty}\int_{\mathbb{R}^n}\frac{u(x,\rho)^2}{v(x,\rho)}(P_s(\partial_x,\partial_\rho) v(x,\rho))\rho^{1-2s}d\gamma(x)d\rho +\int_{\mathbb{R}^n}\frac{u(x,0)^2}{v(x,0)}\displaystyle\lim_{\rho\rightarrow0}\left(\rho^{1-2s}\partial_\rho v\right)(x,\rho)d\gamma(x). \notag
  	\end{align}
  \end{lem}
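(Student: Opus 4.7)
The plan is to carry out a ``ground state representation'' computation in the spirit of Frank, Lieb and Seiringer. I start from the pointwise algebraic identity
$$\left|\nabla_U u-\frac{u}{v}\nabla_U v\right|^2 = |\nabla_U u|^2 - \nabla_U\!\left(\frac{u^2}{v}\right)\cdot \nabla_U v,$$
which follows immediately from $\nabla_U(u^2/v) = (2u/v)\nabla_U u - (u^2/v^2)\nabla_U v$. Thus the lemma reduces to showing that integration against $\rho^{1-2s}d\gamma(x)d\rho$ of the cross term $\nabla_U(u^2/v)\cdot \nabla_U v$ produces, with opposite sign, exactly the remaining three expressions on the right-hand side (boundary term, $P_s v$ term, and the $(n/2+\rho^2/4)u^2$ correction).

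To evaluate the spatial part of that integral I would invoke the adjoint relation on $L^2(\gamma)$ that is recorded in the introduction: since $\partial_j^\ast = 2x_j-\partial_j$, one has $\sum_j \partial_j^\ast\partial_j v = -\Delta v + 2x\cdot \nabla v = Lv-nv = 2Uv - nv$. Writing $w=u^2/v$ and integrating by parts against $d\gamma(x)$ (the Gaussian weight absorbs any contributions at spatial infinity for functions in the stated $C^2$ class) gives
$$\frac{1}{2}\int_{\R^n}\nabla w \cdot \nabla v \, d\gamma(x) = \int_{\R^n} w\,(Uv)\,d\gamma(x) - \frac{n}{2}\int_{\R^n} wv\,d\gamma(x).$$
For the $\rho$-part, the compact support in $\rho$ eliminates the boundary at $+\infty$, so integration by parts once yields
$$\int_{0}^{\infty} \partial_\rho w\,\partial_\rho v\,\rho^{1-2s} d\rho = -w(x,0)\lim_{\rho\to 0}\rho^{1-2s}\partial_\rho v - \int_{0}^{\infty} w\Bigl(\partial_\rho^2 v + \tfrac{1-2s}{\rho}\partial_\rho v\Bigr)\rho^{1-2s}d\rho.$$

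Adding these two contributions and using $wv = u^2$, the derivatives of $v$ regroup as
$$\frac{u^2}{v}\Bigl(Uv-\partial_\rho^2 v - \tfrac{1-2s}{\rho}\partial_\rho v\Bigr) = -\frac{u^2}{v}\,P_s v - \frac{1}{4}\rho^2 u^2,$$
directly from the definition of $P_s$. The $\tfrac14\rho^2 u^2$ piece combines with the $-\tfrac{n}{2}u^2$ piece from the Gaussian integration by parts to produce exactly the $(n/2+\rho^2/4)u^2$ term in the statement, while the boundary contribution at $\rho=0$ supplies the final integral over $\R^n$. The only real obstacle is sign and bookkeeping: one must track the two integrations by parts carefully and must not discard the boundary limit $\lim_{\rho\to 0}\rho^{1-2s}\partial_\rho v$, which for typical choices of $v$ (the extension solutions of interest in later applications) behaves like $c\rho^{2s-1}$ and so contributes a genuinely nonzero surface term. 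The hypotheses that $u,v\in C^2_0([0,\infty), C^2(\R^n))$ guarantee absolute convergence of every integral in sight and the vanishing of all other boundary contributions, which closes the argument.
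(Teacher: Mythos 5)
Your proposal is correct and follows essentially the same route as the paper: the paper expands $\bigl(\partial_j u-\tfrac{u}{v}\partial_j v\bigr)^2$ componentwise and rewrites the cross terms via the Gaussian adjoint $\partial_j^\ast=2x_j-\partial_j$ and a weighted integration by parts in $\rho$, which is exactly your computation with the cross term packaged as $\nabla_U(u^2/v)\cdot\nabla_U v$. The sign and constant bookkeeping in your sketch checks out ($\tfrac12\sum_j\partial_j^\ast\partial_j=U-\tfrac{n}{2}$, and the overall minus sign in front of the cross term turns $-\tfrac{n}{2}u^2-\tfrac14\rho^2u^2-\tfrac{u^2}{v}P_sv$ and the boundary term into the stated right-hand side), so there is nothing to add.
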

  \begin{proof}
  	For any $1\le j\le n$, we consider the following integral 
  	\begin{equation}
  	\label{th1}
  	\int_{\mathbb{R}^n}\left(\partial_{j}u-\frac{u}{v}\partial_{j}v\right)^2d\gamma(x) =\int_{\mathbb{R}^n}\left((\partial_{j}u)^2-2\frac{u}{v}\partial_{j}u\partial_{j}v+\frac{u^2}{v^2}(\partial_{j}v)^2\right)d\gamma(x).
  	\end{equation}  
  	Now by definition of adjoint wee get
  	\[\int_{\mathbb{R}^n}\frac{u}{v}\partial_{j}u\partial_{j}v d\gamma(x)=\int_{\mathbb{R}^n}u\partial_{j}^*\left(\frac{u}{v}\partial_{j}v\right)d\gamma(x).\]
  	Using the fact that $\partial_{j}^*=2x_j-\partial_{j}$ on $L^2({\gamma})$ we have 
  	\[u\partial_{j}^*\left(\frac{u}{v}\partial_{j}v\right)=2x_j\frac{u^2}{v}\partial_{j}v-\frac{u}{v}\partial_{j}u\partial_{j}v-u^2\partial_{j}\left(\frac{1}{v}\partial_jv\right)\] which together with the above equation yields
  	\begin{align*}
  	2\int_{\mathbb{R}^n}\frac{u}{v}\partial_{j}u\partial_{j}v d\gamma(x)&=\int_{\mathbb{R}^n}\left(2x_j\frac{u^2}{v}\partial_{j}v -u^2\partial_{j}\left(\frac{1}{v}\partial_jv\right)\right)d\gamma(x)\\
  	&=\int_{\mathbb{R}^n}\left(2x_j\frac{u^2}{v}\partial_{j}v  -\frac{u^2}{v}\partial_j^2v+\frac{u^2}{v^2}(\partial_jv)^2\right)d\gamma(x).
  	\end{align*}
  	Hence we have  
  	\begin{align*}
  	\int_{\mathbb{R}^n}\left(\frac{u^2}{v^2}(\partial_jv)^2-2\frac{u}{v}\partial_{j}u\partial_{j}v\right)d\gamma(x)=-\int_{\mathbb{R}^n}\frac{u^2}{v}\partial_{j}^*\partial_{j}v d\gamma(x).
  	\end{align*}
  	Similarly for any $x\in\mathbb{R}^n$ one can obtain
  	\begin{align*}
  	\int_{0}^{\infty}\left(\frac{u^2}{v^2}(\partial_\rho v)^2-2\frac{u}{v}\partial_\rho u\partial_\rho v\right) \rho^{1-2s}d\rho
  	=\int_{0}^{\infty} \frac{u^2}{v^2}\partial_\rho(\rho^{1-2s}\partial_\rho v) d\rho + \frac{u(x,0)^2}{v(x,0)} \displaystyle\lim_{\rho\rightarrow0}\left(\rho^{1-2s}\partial_\rho v\right)(x,\rho).
  	\end{align*}
  	Multiplying both side of \eqref{th1} by $\frac{1}{2}$ and  summing over $j$ we get the required result.
  \end{proof} 
  \begin{thm}[General trace Hardy inequality]
  	Let $0<s<1.$ Suppose $\phi\in L^2(\gamma)$ is a real valued function in the domain of $U_{s}$ such that $\phi^{-1} U_s\phi$ is locally integrable. Then for any    real valued   function $u(x,\rho)$ from the space $C^2_{0}\left([0,\infty), C^2_{b}(\mathbb{R}^n) \right)$   we have 
  	\[\int_{0}^{\infty}\int_{\mathbb{R}^n}\left(\left|\nabla_Uu(x,\rho)\right|^2+\left(\frac{n}{2}+\frac{1}{4}\rho^2\right)u(x,\rho)^2\right)\rho^{1-2s}d\gamma(x)d\rho \ge C_{n,s} \int_{\mathbb{R}^n}u(x,0)^2\frac{L_s\phi(x)}{\phi(x)}d\gamma(x).\]
  \end{thm}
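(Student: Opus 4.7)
The plan is to apply Lemma \ref{L2.1} with a cleverly chosen test function $v$, namely the solution of the extension problem for $U$ with initial datum $\phi$, and then discard a non-negative square on the left. More precisely, I would set
$$ v(x,\rho) := T_{s,\rho}\phi(x) = \frac{4^{-s}}{\Gamma(s)}\rho^{2s}\int_0^\infty k_{t,s}(\rho)\,e^{-tU}\phi(x)\,dt,$$
so that by Theorem \ref{epsolth} (adapted to $U$) we have $v(x,0)=\phi(x)$ and $P_s(\partial_x,\partial_\rho)v(x,\rho)=0$ on $\mathbb{R}^n\times(0,\infty)$. Moreover, the Neumann-data theorem immediately preceding Section 3 gives
$$ \lim_{\rho\to 0}\rho^{1-2s}\partial_\rho v(x,\rho) = -2^{1-2s}\frac{\Gamma(1-s)}{\Gamma(s)}\,U_s\phi(x),$$
at least in $L^p(\gamma)$ and, with some extra regularity on $\phi$, pointwise a.e.

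With this choice of $v$, Lemma \ref{L2.1} collapses dramatically: the second term on its right-hand side vanishes because $P_s v \equiv 0$, so the identity reduces to
\begin{align*}
 &\int_0^\infty\!\!\int_{\mathbb{R}^n}\Bigl|\nabla_U u - \tfrac{u}{v}\nabla_U v\Bigr|^2 \rho^{1-2s}\,d\gamma\,d\rho \\
 &\quad = a_s(U,u)^2 \;-\; 2^{1-2s}\frac{\Gamma(1-s)}{\Gamma(s)} \int_{\mathbb{R}^n} \frac{u(x,0)^2}{\phi(x)}\,U_s\phi(x)\,d\gamma(x).
\end{align*}
Since the integral on the left is manifestly non-negative, discarding it yields the desired inequality with the explicit constant $C_{n,s}=2^{1-2s}\Gamma(1-s)/\Gamma(s)$, matching the statement given earlier for the general trace Hardy inequality.

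The main technical obstacle is justifying that Lemma \ref{L2.1} actually applies to this particular $v$, since a priori $v = T_{s,\rho}\phi$ need not be compactly supported in $\rho$ nor lie in $C^2_0([0,\infty),C^2(\mathbb{R}^n))$ as the lemma demands; one must also ensure that the boundary integration by parts in $\rho$ at infinity contributes nothing. I would handle this by a standard cutoff-and-approximation argument: truncate $v$ by a smooth cutoff $\chi_R(\rho)$ supported in $[0,R]$, verify the identity for $\chi_R v$ (where the compact support kills the $\rho=\infty$ boundary term), and then pass $R\to\infty$ using the exponential decay of $v$ and its $\rho$-derivative built into the representation $v=T_{s,\rho}\phi$ (as visible from the sub-Gaussian weight $e^{-(\coth t)\rho^2/4}$ in $k_{t,s}$, or equivalently from the Whittaker asymptotics in Lemma \ref{Walarge}). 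The assumption that $\phi^{-1}U_s\phi$ is locally integrable ensures the boundary integral on the right side of Lemma \ref{L2.1} is well-defined against the bounded function $u(x,0)^2$. A parallel computation, with $\nabla_U$ replaced by $\nabla_L$ and $U$ by $L$ throughout, handles the case $A=L$ verbatim.
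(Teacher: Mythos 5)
Your proposal is correct and follows essentially the same route as the paper: take $v=T_{s,\rho}\phi$ in Lemma \ref{L2.1}, observe that the $P_s(\partial_x,\partial_\rho)v$ term vanishes and that the boundary term produces $2^{1-2s}\Gamma(1-s)/\Gamma(s)\,U_s\phi$, then drop the non-negative square on the left. Your added discussion of the cutoff-and-approximation step to justify applying the lemma to a $v$ that is not compactly supported in $\rho$ is a point the paper passes over silently, but it does not change the argument.
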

  \begin{proof}
  	To prove this result, we make use of the Lemma \ref{L2.1}. Since the left hand side of \eqref{2.2} is always non-negative, we have for $0<s<1$,
  	\begin{align} \label{2.4}
  	&\int_{0}^{\infty}\int_{\mathbb{R}^n}\left(\left|\nabla_Uu(x,\rho)\right|^2+\left(\frac{n}{2}+\frac{1}{4}\rho^2\right)u(x,\rho)^2\right)\rho^{1-2s}d\gamma(x)d\rho \\
  	&\geq -\int_{0}^{\infty}\int_{\mathbb{R}^n}\frac{u(x,\rho)^2}{v(x,\rho)}(P_s(\partial_x,\partial_\rho)v(x,\rho))\rho^{1-2s}d\gamma(x)d\rho -\int_{\mathbb{R}^n}\frac{u(x,0)^2}{v(x,0)}\displaystyle\lim_{\rho\rightarrow0}\left(\rho^{1-2s}\partial_\rho v\right)(x,\rho)d\gamma(x), \notag
  	\end{align}
  	Now we take $v(x,\rho )=\frac{4^{-s}}{\Gamma(s)}\rho^{2s}\int_{0}^{\infty}  k_{t,s}(\rho) e^{-tL}\phi(x ) dt $. Then $v$ solves the extension problem \ref{ep}, i.e., $ P_s(\partial_x,\partial_\rho)v=0$ and $v(x,0)=\phi(x).$
  	Then from \eqref{2.4}, we have 
  	\begin{align} \label{2.5}
  	&\int_{0}^{\infty}\int_{\mathbb{R}^n}\left(\left|\nabla_Uu(x,\rho)\right|^2+\left(\frac{n}{2}+\frac{1}{4}\rho^2\right)u(x,\rho)^2\right)\rho^{1-2s}d\gamma(x)d\rho \\
  	&\geq -\int_{\mathbb{R}^n}\frac{u(x,0)^2}{v(x,0)}\displaystyle\lim_{\rho\rightarrow0}\left(\rho^{1-2s}\partial_\rho v\right)(x,\rho)d\gamma(x), \notag
  	\end{align}
  	In view of the above, we need to solve the extension problem for $U$ with a given initial condition $\phi.$ Since
  	$-\displaystyle\lim_{\rho\rightarrow0} \rho^{1-2s}\partial_\rho u(x,\rho)=2^{1-2s} \, \frac{\Gamma(1-s)}{\Gamma(s)} \, U_s\phi $  we get the desired inequality.
  \end{proof}

  \begin{cor}
  	\label{preHard}
  	Let $0<s<1$ and $f \in L^2(\gamma)$ with $U_sf \in L^2(\gamma).$ Then we have 
  	\begin{align} \label{2.8}
  	\langle U_sf, f\rangle_{L^2(\gamma)} \geq    \int_{\mathbb{R}^n} \, f^2(x) \, \frac{U_s\phi}{\phi} \, d\gamma(x) \notag
  	\end{align}
  	for any real valued $\phi$ in the domain of $U_s.$
  \end{cor}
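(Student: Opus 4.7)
The plan is to apply the general trace Hardy inequality with the specific test function $u(x,\rho)=T_{s,\rho}f(x)$, i.e., the solution of the extension problem for $U$ with initial data $f$ provided by the analogue of Theorem \ref{epsolth}. The key point is that for this choice the trace norm $a_s(U,u)^2$ collapses to a constant multiple of $\langle U_sf,f\rangle_{L^2(\gamma)}$, so that the constant $2^{1-2s}\Gamma(1-s)/\Gamma(s)$ on both sides of the trace Hardy inequality cancels and the bare corollary emerges.

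I would first simplify the spatial part of $a_s(U,u)^2$ using $\sum_{j=1}^n\partial_j^*\partial_j=-\Delta+2x\cdot\nabla=2U-n$ on $L^2(\gamma)$, obtaining, for each fixed $\rho$,
\begin{equation*}
\int_{\R^n}\Bigl(|\nabla_Uu|^2+\tfrac{n}{2}u^2\Bigr)d\gamma=\int_{\R^n}u\,Uu\,d\gamma.
\end{equation*}
Combining this with the remaining $(\partial_\rho u)^2$ and $\tfrac14\rho^2u^2$ terms gives
\begin{equation*}
a_s(U,u)^2=\int_0^\infty\!\!\int_{\R^n}\rho^{1-2s}\Bigl(u\,Uu+(\partial_\rho u)^2+\tfrac14\rho^2u^2\Bigr)d\gamma\,d\rho.
\end{equation*}
Now I would insert the extension equation $Uu=\partial_\rho^2u+\tfrac{1-2s}{\rho}\partial_\rho u-\tfrac14\rho^2u$ into the $uUu$ term. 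Since $u\partial_\rho^2u+\tfrac{1-2s}{\rho}u\partial_\rho u=\rho^{2s-1}u\,\partial_\rho(\rho^{1-2s}\partial_\rho u)$, integrating by parts in $\rho$ makes the $(\partial_\rho u)^2$ contributions and the $\tfrac14\rho^2u^2$ contributions cancel pairwise, leaving only the boundary term
\begin{equation*}
a_s(U,u)^2=-\int_{\R^n}f(x)\lim_{\rho\to 0^+}\bigl(\rho^{1-2s}\partial_\rho u(x,\rho)\bigr)\,d\gamma(x).
\end{equation*}
Applying the Neumann boundary identity established in Section 2.2 (with $A=U$), namely $\lim_{\rho\to 0^+}\rho^{1-2s}\partial_\rho u=-2^{1-2s}\Gamma(1-s)/\Gamma(s)\,U_sf$, then yields $a_s(U,u)^2=2^{1-2s}\Gamma(1-s)/\Gamma(s)\,\langle U_sf,f\rangle_{L^2(\gamma)}$. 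Plugging this into the general trace Hardy inequality and dividing through by the common constant produces the stated bound.

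The main technical obstacle is that the general trace Hardy inequality is stated for test functions in $C_0^2([0,\infty),C_b^2(\R^n))$, whereas $T_{s,\rho}f$ is neither compactly supported in $\rho$ nor bounded in $x$ in general. I would handle this in the standard way: first establish the identity $a_s(U,T_{s,\cdot}f)^2=2^{1-2s}\Gamma(1-s)/\Gamma(s)\,\langle U_sf,f\rangle_{L^2(\gamma)}$ for $f$ a finite Hermite-spectral sum, where the exponential decay in $\rho$ provided by the Whittaker factors (Lemma \ref{Walarge}) together with the polynomial growth of Hermite polynomials lets one truncate $u$ smoothly in $\rho$ at infinity and rigorously justify every integration by parts. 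The general case $f\in L^2(\gamma)$ with $U_sf\in L^2(\gamma)$ is then recovered by a monotone convergence argument on the spectral expansion, and the inequality in the corollary is inherited in the limit.
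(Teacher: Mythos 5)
Your proposal is correct and follows essentially the same route as the paper: take $u$ to be the solution $T_{s,\rho}f$ of the extension problem itself, observe (via the integration by parts in $x$ using $\partial_j^{*}=2x_j-\partial_j$ and in $\rho$ using the extension equation, i.e.\ the computation underlying Lemma \ref{L2.1}) that $a_s(U,u)^2$ collapses to the boundary term $2^{1-2s}\frac{\Gamma(1-s)}{\Gamma(s)}\langle U_sf,f\rangle_{L^2(\gamma)}$, and cancel the common constant in the general trace Hardy inequality. Your additional care about the admissible test-function class (truncation for finite spectral sums, then a limiting argument) addresses a point the paper passes over silently, but the underlying argument is identical.
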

  \begin{proof} When $ u $ itself solves the extension problem with initial condition $ f ,$ the proof of Lemma \ref{L2.1} shows that the left hand side of the trace Hardy inequality reduces to $\langle U_sf, f\rangle_{L^2(\gamma)}.$
  \end{proof}

  \subsection{Hardy's inequality from trace Hardy}
  In this subsection we construct a suitable function $\phi$ so that $\frac{L_s\phi}{\phi}$ simplifies. In order to do so, let us quickly recall some basic facts about Laguerre functions.  Let $\alpha > -1$ and $k \in  \mathbb{N}$. The Laguerre polynomial of degree $k$ and type $\alpha$, $L^{\alpha}_k (x)$ is a solution of   the ordinary differential equation
  $$xy^{''} (x) + (\alpha + 1 - x)y'(x) + ky(x) = 0$$
    whose explicit expression is given by 
    \begin{equation}
     \label{1.16}
    L_k^{\alpha}(x)=\sum_{j=0}^k\frac{\Gamma(k+\alpha+1)}{\Gamma(k-j+1)\Gamma(j+\alpha+1)} \, \frac{(-x)^j}{j!}.
    \end{equation}
     Recall that the Laguerre functions of type $(n- 1)$ are given by   
  $$ \varphi_{k }^{n-1}(r)=L_k^{n-1}(\frac{1}{2} r^2) \, e^{-\frac{1}{4} r^2}, r\ge 0.$$
  For more details about such functions we refer the reader to \cite[Chapter 1]{T2}. Now given $s,\rho>0$, we consider the function  $\phi_{s,\rho}$ which is defined in terms of Laguerre polynomials as follows: 
  \begin{equation} \label{3.5}
  \phi_{s,\rho}(x) =\sum_{m=0}^{\infty} C_{2m,\rho}(s) \, L_m^{\frac{n}{2}-1}(|x|^2) \nonumber 
  =e^{\frac{1}{2}|x|^2} \sum_{m=0}^{\infty}  C_{2m,\rho}(s) \, \varphi_m^{\frac{n}{2}-1}(\sqrt{2}|x|), 
  \end{equation}
  where the coefficients are given in terms of $L$ function, by
  $$C_{k,\rho}(s)=\frac{2\pi}{\Gamma ((n/2+1+s)/2)^2 }  \, L\left(\rho, \frac{2k+n}{4}+\frac{1+s}{2},\frac{2k+n}{4}+\frac{1-s}{2}\right).$$
  In the following lemma we show that how these functions are related via the fractional power of the operator under studied.   
 
  \begin{lem}
  	\label{phichange}
  	For $-1<s<1$, we have 
  	\begin{eqnarray}
  	U_s \phi_{-s,\rho}=\frac{\Gamma ((n/2+1+s)/2)^2}{  \Gamma ((n/2+1-s)/2)^2} \, (4\rho)^s \, \phi_{s,\rho}.
  	\end{eqnarray}
  \end{lem}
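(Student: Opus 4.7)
The plan is to reduce the identity to a term-by-term relation between coefficients of $\phi_{\pm s,\rho}$, by exploiting the fact that the radial Laguerre polynomials in $|x|^2$ are $U$-eigenfunctions. Set $\psi_m(x) := L_m^{n/2-1}(|x|^2)$. First I would verify by a direct calculation, writing $Lf = -4r g''(r) + (4r-2n)g'(r) + n g(r)$ for $f(x) = g(|x|^2)$ with $r = |x|^2$ and comparing with the Laguerre ODE $ry''(r) + (n/2 - r)y'(r) + m y(r) = 0$, that
\[
L\psi_m = (4m+n)\psi_m, \qquad U\psi_m = \Big(2m + \tfrac{n}{2}\Big)\psi_m.
\]
Since $L$ is self-adjoint on $L^2(\gamma)$, this places $\psi_m$ in the eigenspace of $Q_{2m}$, and the spectral definition of $U_s$ then gives
\[
U_s \psi_m = 2^s \frac{\Gamma\!\big(\tfrac{2m+n/2+1+s}{2}\big)}{\Gamma\!\big(\tfrac{2m+n/2+1-s}{2}\big)} \psi_m.
\]

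Second, I would apply $U_s$ term by term to the expansion $\phi_{-s,\rho}(x) = \sum_{m=0}^\infty C_{2m,\rho}(-s)\,\psi_m(x)$. Writing $a := (2m+n/2+1-s)/2$ and $b := (2m+n/2+1+s)/2$, so that $b - a = s$, the coefficient of $\psi_m$ in $U_s\phi_{-s,\rho}$ becomes
\[
\frac{2\pi}{\Gamma\!\big((n/2+1-s)/2\big)^2}\,L(\rho, a, b)\cdot 2^s\frac{\Gamma(b)}{\Gamma(a)}.
\]
The Cowling--Haagerup transformation property \eqref{3.7}, which reads $\tfrac{(2\rho)^a}{\Gamma(a)}L(\rho,a,b) = \tfrac{(2\rho)^b}{\Gamma(b)}L(\rho,b,a)$, rewrites this as
\[
\frac{2\pi}{\Gamma\!\big((n/2+1-s)/2\big)^2}\,(4\rho)^s\,L(\rho, b, a),
\]
and comparing with $C_{2m,\rho}(s) = \tfrac{2\pi}{\Gamma((n/2+1+s)/2)^2} L(\rho, b, a)$ this is precisely the $m$-th coefficient of $\tfrac{\Gamma((n/2+1+s)/2)^2}{\Gamma((n/2+1-s)/2)^2}(4\rho)^s\,\phi_{s,\rho}$. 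Summing over $m$ yields the claimed identity.

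The only genuinely delicate point is justifying the term-by-term application of $U_s$, since $\psi_m$ is not square-integrable with respect to $\gamma$ and $\phi_{\pm s,\rho}$ is defined only as a formal Laguerre series. The way to handle this is to truncate the series at level $M$ -- each partial sum is a finite linear combination of eigenfunctions of $U$, so the spectral formula applies unambiguously -- and then pass to the limit $M\to\infty$, using the decay of $L(\rho,a,b)$ in the parameters (controlled by the estimate $L(\rho,a,b)\le \Gamma(a)(2\rho)^{-a}$ quoted in Section~3 from \cite{RT3}) to ensure uniform convergence of both sides on compact sets of $x$. Everything else is a mechanical manipulation of the explicit formulas for $C_{2m,\rho}(\pm s)$ and the identity $b-a = s$.
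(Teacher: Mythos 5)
Your proof is correct and follows essentially the same route as the paper: expand $\phi_{\pm s,\rho}$ in the Laguerre polynomials $\psi_m(x)=L_m^{n/2-1}(|x|^2)$, apply the spectral multiplier defining $U_s$ coefficientwise, and convert $C_{2m,\rho}(-s)$ into $C_{2m,\rho}(s)$ via the Cowling--Haagerup transformation property \eqref{3.7}; the arithmetic with $a,b$ and $b-a=s$ matches the paper's computation \eqref{3.9} exactly. The one genuine (and arguably cleaner) variation is the first step: you identify $\psi_m$ as lying in the range of $Q_{2m}$ by checking directly that $L\psi_m=(4m+n)\psi_m$ from the Laguerre ODE, whereas the paper reaches the same conclusion indirectly, by introducing an auxiliary radial $h\in L^2(\R^n)$ with nonvanishing Laguerre coefficients $R_m^{n/2-1}(h)$, using the known form of the radial Hermite projections $P_{2m}h$, and dividing by $R_m^{n/2-1}(h)$; your version avoids that device entirely. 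One small slip in your closing remarks: $\psi_m$ is a polynomial, hence it \emph{is} in $L^2(\gamma)$ (the measure is Gaussian), so the worry about non-square-integrability is unfounded --- the only point genuinely needing justification is the convergence of the infinite Laguerre series and the interchange of $U_s$ with the sum, which your truncation argument (and the bound $L(\rho,a,b)\le \Gamma(a)(2\rho)^{-a}$) handles, and which the paper itself passes over in silence.
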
	
  \begin{proof}
  	Let us take two radial functions $g$ and $h$ on $\R^n$ such that 
  	$$g(x)=\pi^{n/2} \, e^{\frac{1}{2}|x|^2} \, h(x),$$
	where $h\in L^2(\R^n).$
  	Moreover we choose $h$ in such a way that, the Laguerre coefficients
  	$$R_m^{\frac{n}{2}-1}(h)=2\frac{\Gamma(m+1)}{\Gamma(m+\frac{n}{2})} \int_0^{\infty} h(r) \, L_m^{\frac{n}{2}-1}(r^2) \, e^{-(1/2)r^2} r^{n-1} \,dr $$
  	are non-zero. By our choice of $h$ and $g$ it is not hard to see that $Q_kg(x)=e^{\frac{|x|^2}{2}} \, P_kh(x).$ Also since $h$ is radial, using a result proved in \cite[Theorem 3.4.1]{T2} we have  
  	\begin{align}
  	\label{rad_h}
  		P_kh(x)=	
  	\begin{cases}
  	0, & \text{if} \, \,  k=2m+1\\
  	R^{\frac{n}{2}-1}_m(h ) \, L^{\frac{n}{2}-1}_m(|x|^2) \, e^{-\frac{1}{2}|x|^2}& \text{if} \, \,  k=2m,
  	\end{cases}
  	\end{align}
  	Now using the definition of  Laguerre function along with the fact that $R^{\frac{n}{2}-1}_m(h )\neq 0$, we see that 
  	\begin{align*}
  	\phi_{s,\rho}(x)&=e^{\frac{1}{2}|x|^2} \sum_{m=0}^{\infty}\frac{2\pi}{\Gamma(\frac{n}{2}+1+s)^2}  \, L(\rho, \frac{4m+n}{4}+\frac{1+s}{2},\frac{4m+n}{4}+\frac{1-s}{2}) \, \varphi_m^{\frac{n}{2}-1}(\sqrt{2}|x|)\\
  	&=e^{\frac{|x|^2}{2}}\sum_{m=0}^{\infty} C_{2m,\rho}(s) \, \left(R^{\frac{n}{2}-1}_m(h )\right)^{-1} \, R^{\frac{n}{2}-1}_m(h ) \,  L^{\frac{n}{2}}_m(|x|^2) \, e^{-\frac{1}{2}|x|^2}.
  	\end{align*}
  	But the observation \eqref{rad_h} and the fact that $Q_kg(x)=e^{\frac{|x|^2}{2}} \, P_kh(x) $ transform the above equation to 
  	\begin{equation}
  	\label{phi_Q}
  	\phi_{s,\rho}(x)=\sum_{k=0}^{\infty} C_{k,\rho}(s) \, \left(R^{\frac{n}{2}-1}_{\lfloor \frac{k}{2}\rfloor}(h )\right)^{-1} Q_kg(x).
  	\end{equation}  Hence using the definition of $U_s$ we have 
  	\begin{equation}
  	\label{lsphi} 
  	U_{s}\phi_{-s,\rho} =\sum_{k=0}^{\infty} C_{k,\rho}(-s) \,2^s\frac{\Gamma (\frac{2k+n}{4}+\frac{1+s}{2})}{\Gamma(\frac{2k+n}{4}+\frac{1-s}{2})} \left(R^{\frac{n}{2}-1}_{\lfloor \frac{k}{2}\rfloor}(h )\right)^{-1}  Q_kg  .\end{equation}
  	But in view of the transformation property \eqref{3.7}, we have 
  	\begin{align} \label{3.9}
  	&C_{k,\rho}(-s)=\frac{2\pi}{\Gamma ((n/2+1-s)/2)^2} L\left(\rho, \frac{2k+n}{4}+\frac{1-s}{2},\frac{2k+n}{4}+\frac{1+s}{2}\right) \notag\\
  	&=\frac{2\pi}{\Gamma ((n/2+1-s)/2)^2} \, (2\rho)^s \, \frac{\Gamma (\frac{2k+n}{4}+\frac{1-s}{2})}{\Gamma(\frac{2k+n}{4}+\frac{1+s}{2})} L\left(\rho, \frac{2k+n}{4}+\frac{1+s}{2},\frac{2k+n}{4}+\frac{1-s}{2}\right) \notag\\
  	&= \frac{\Gamma ((n/2+1+s)/2)^2}{  \Gamma ((n/2+1-s)/2)^2} \, (2\rho)^s \, \frac{\Gamma (\frac{2k+n}{4}+\frac{1-s}{2})}{\Gamma(\frac{2k+n}{4}+\frac{1+s}{2})} \, C_{k,\rho}(s).
  	\end{align}	
  	Hence from \eqref{lsphi} we obtain
  	\[U_s\phi_{-s,\rho}= \frac{\Gamma ((n/2+1+s)/2)^2}{  \Gamma ((n/2+1-s)/2)^2} \, (4\rho)^s\,\phi_{s,\rho} \] proving the lemma.
   
  \end{proof}
  Now in the rest of the section we will calculate $\phi_{s,\rho}$ almost explicitly in terms of Macdonald's function $K_{\nu}$ defined by the integral for $z>0$ 
  \[K_{\nu}(z):=2^{-\nu-1}z^{\nu}\int_{0}^{\infty}e^{-t-\frac{z^2}{4t}}t^{-\nu-1}dt.\]
  \begin{prop}
  	\label{phical}
  	Let $0<s<1$ and $\rho>0$. Then we have  
  	\begin{equation}
  	\phi_{s,\rho}(x)=2\frac{\sqrt{\pi}2^{-(n/2+1+s)/2}}{\sqrt{2\pi}\Gamma((n/2+1+s)/2)}e^{\frac{|x|^2}{2}} (\rho+|x|^2)^{-(n/2+1+s)/2}K_{ (n/2+1+s)/2}(\rho+|x|^2).
  	\end{equation}
  	
  \end{prop}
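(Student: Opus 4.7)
The plan is to evaluate the series defining $\phi_{s,\rho}$ in closed form by combining three ingredients: the integral representation of the auxiliary function $L(a,b,c)$, the classical generating function for Laguerre polynomials, and a Schl\"afli-type integral representation of the Macdonald function $K_\nu$.

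First, I would substitute the integral representation
\[
L(\rho,m+b,m+b')=\int_0^{\infty}e^{-\rho(2t+1)}\,t^{m+b-1}(1+t)^{-(m+b')}\,dt,
\]
where $b=(n/2+1+s)/2$ and $b'=(n/2+1-s)/2$, into the definition of $C_{2m,\rho}(s)$ and plug the resulting expression into the series $\phi_{s,\rho}(x)=\sum_{m\ge 0}C_{2m,\rho}(s)\,L_m^{n/2-1}(|x|^2)$. The fast decay of $e^{-\rho(2t+1)}$ together with standard polynomial bounds on $L_m^{n/2-1}(|x|^2)$ justifies interchanging the sum and the integral.

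Second, I would apply the classical generating function
\[
\sum_{m=0}^{\infty}L_m^{\alpha}(y)\,w^m=(1-w)^{-\alpha-1}\,e^{-yw/(1-w)},\qquad |w|<1,
\]
to the $m$-sum appearing under the integral, with $\alpha=n/2-1$, $y=|x|^2$, and $w=t/(1+t)$. Because $1-w=1/(1+t)$ and $w/(1-w)=t$, the inner sum collapses to $(1+t)^{n/2}e^{-t|x|^2}$. The identity $-(m+b')+n/2=-(b')+n/2=b-1$ then produces (after absorbing the factor $e^{-\rho}$ into $e^{|x|^2/2}$) a single integral of the form
\[
\phi_{s,\rho}(x)=\frac{2\pi}{\Gamma(b)^{2}}\,e^{|x|^2/2}\int_0^{\infty}e^{-(2\rho+|x|^2)\,t}\,[\,t(1+t)\,]^{\,b-1}\,dt,
\]
modulo explicit constants.

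Third, I would evaluate this integral by the substitution $t=\tfrac12(u-1)$, which transforms $t(1+t)$ into $(u^2-1)/4$ and brings the domain of integration to $u\ge 1$. The resulting integral is exactly of the Schl\"afli type
\[
K_{\nu}(z)=\frac{\sqrt{\pi}}{\Gamma(\nu+\tfrac12)}\Bigl(\frac{z}{2}\Bigr)^{\nu}\int_1^{\infty}e^{-zu}(u^2-1)^{\nu-\frac12}\,du,
\]
with $\nu$ and $z$ read off from the exponents, yielding a Macdonald function times explicit powers of $(\rho+|x|^2)$ and $2$. Collecting the $\Gamma$-factors and the various powers of $2$ produced by the substitutions gives the closed form stated in the proposition.

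The main obstacle is the careful bookkeeping of the numerous constants and powers of $2$ introduced by the two substitutions: the interchanges of sum and integral are routine, and the generating function computation is short, but matching the precise order and argument of $K_{\nu}$ as well as the explicit prefactor requires accurate tracking of the $\Gamma$-duplication and the exponent shifts coming from $-(b')+n/2=b-1$.
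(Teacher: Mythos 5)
Your route is genuinely different from the paper's. The paper does not touch the series directly: it quotes an identity from \cite{CRT} (equation \ref{forphical1}) that converts the Laguerre expansion into the one--dimensional Fourier integral $\frac{1}{\sqrt{2\pi}}\int_{\R}e^{it}\bigl((\rho+|x|^2)^2+t^2\bigr)^{-b}\,dt$ with $b=(n/2+1+s)/2$, and then evaluates that integral by the cosine--transform formula \ref{formulak} for $K_\nu$. You instead evaluate the series from first principles via the integral representation of the $L$--function and the Laguerre generating function, arriving at a Schl\"afli integral; this is more elementary and self--contained, and your first two steps are sound (the interchange is justified, and $n/2-b'=b-1$ is correct, though your middle display should carry the prefactor $e^{-\rho}$ rather than $e^{|x|^2/2}$ at that stage -- the Gaussian factor only appears after the substitution $t=(u-1)/2$ combines $e^{-\rho}$ with $e^{(2\rho+|x|^2)/2}$).

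The gap is precisely in the step you dismissed as bookkeeping: carried out, it does not land on the displayed formula. Under $t=(u-1)/2$ the exponential $e^{-\rho(2t+1)-|x|^2t}$ becomes $e^{|x|^2/2}e^{-zu}$ with $z=\rho+\tfrac12|x|^2$, and $(u^2-1)^{b-1}=(u^2-1)^{\nu-\frac12}$ forces $\nu=b-\tfrac12$, so the Schl\"afli formula gives
\begin{equation*}
\phi_{s,\rho}(x)=\frac{2\sqrt{\pi}}{\Gamma(b)}\,e^{\frac{|x|^2}{2}}\,\bigl(2\rho+|x|^2\bigr)^{\frac12-b}\,K_{b-\frac12}\Bigl(\rho+\frac{|x|^2}{2}\Bigr),
\end{equation*}
i.e.\ order $(n/2+s)/2$ and argument $\rho+\tfrac12|x|^2$, not order $(n/2+1+s)/2$ and argument $\rho+|x|^2$; already at $x=0$ your series gives $(2\rho)^{\frac12-b}K_{b-\frac12}(\rho)$ up to constants, versus $\rho^{-b}K_b(\rho)$ in the statement. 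Since the generating--function computation is elementary and airtight, this is not a slip on your end: it signals that \ref{forphical1} is being applied here with a Laguerre normalization ($\varphi_k^\alpha(r)=L_k^\alpha(r^2/2)e^{-r^2/4}$) different from the one under which it holds, and that the paper's own evaluation also loses the $\tfrac12$ shifts coming from $(2z)^{1/2-\delta}$ and $K_{1/2-\delta}$ in \ref{formulak}. You therefore cannot simply assert that the constants match: you must either present your closed form as a correction of the Proposition (propagating the change into Theorem \ref{h-u_s}, where only the ratio $\phi_{s,\rho}/\phi_{-s,\rho}$ matters) or locate and fix the normalization mismatch before claiming agreement.
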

  \begin{proof}
  	First we note that the following formula proved in \cite[Lemma 3.8]{CRT}
  	\begin{equation}
  	\label{forphical1}
  	\frac{1}{\sqrt{2\pi}}\int_{-\infty}^{\infty}e^{i\lambda t}\left((\rho+r^2)^2+t^2\right)^{(\alpha+2+s)/2}dt=|\lambda|^{\alpha+1}\sum_{k=0}^{\infty}c^{\lambda}_{k,\rho}(s)\varphi_{k}^{\alpha}(\sqrt{(2|\lambda|)}
  	r)
  	\end{equation} where the coefficients $c^{\lambda}_{k,\rho}(s)$ are given by 
  	\[c^{\lambda}_{k,\rho}(s)=\frac{2\pi |\lambda|^{s}}{\Gamma(( \alpha+2+s)/2)^2}  \, L\left(\rho|\lambda|, \frac{4k+2\alpha+2}{4}+\frac{1+s}{2},\frac{4k+2\alpha+2}{4}+\frac{1-s}{2}\right).\]
  	This holds for any $\lambda\neq0$ and $\alpha>-1/2.$ In particular, taking $\alpha=n/2-1$ and $\lambda=1$ in \eqref{forphical1}, we have 
  	\begin{equation}
  	\phi_{s,\rho}(x)=e^{\frac{|x|^2}{2}}\frac{1}{\sqrt{2\pi}}\int_{-\infty}^{\infty}e^{i t}\left((\rho+|x|^2)^2+t^2\right)^{-( n/2+1+s)/2}dt.
  	\end{equation}
  	The right hand side of the above equation can be computed in terms of Macdonald's function $K_{\nu}$.
  	Now we make use of the following formula (see \cite[p. 390]{PBM})
  	\begin{equation}
  	\label{formulak}
  	\int_{0}^{\infty}\frac{\cos br}{(r^2+z^2)^{\delta}}dr=\left(\frac{2z}{b}\right)^{1/2-\delta}\frac{\sqrt{\pi}}{\Gamma(\delta)}K_{1/2-\delta}(bz)\end{equation} which is valid for $b>0$ and $\Re\delta,\Re z>0$. This gives 
  	\begin{align}
  	\int_{-\infty}^{\infty}&e^{i t}\left((\rho+|x|^2)^2+t^2\right)^{-( n/2+1+s)/2}dt \nonumber\\ &=2\frac{\sqrt{\pi}2^{-(n/2+1+s)/2}}{\Gamma((n/2+1+s)/2)}(\rho+|x|^2)^{-(n/2+1+s)/2}K_{-(n/2+1+s)/2}(\rho+|x|^2)
  	\end{align}
  	Now using the fact that $K_{\nu}=K_{-\nu}$ we obtain
  	\begin{equation}
  	\label{phiexp}
  	\phi_{s,\rho}(x)=2\frac{\sqrt{\pi}2^{-(n/2+1+s)/2}}{\sqrt{2\pi}\Gamma((n/2+1+s)/2)}e^{\frac{|x|^2}{2}} (\rho+|x|^2)^{-(n/2+1+s)/2}K_{ (n/2+1+s)/2}(\rho+|x|^2)
  	\end{equation}
  	 proving the proposition.
  \end{proof}
We are now ready to prove Theorem \ref{h-u_s}. For the convenience of the reader we state the theorem here as well.
  \begin{thm} 
  Let $0<s<1$. Assume that $f\in L^2(\gamma)$ such that $U_{s}f\in L^2(\gamma)$.  Then   for every $ \rho > 0 $ we have  
  \[\langle U_sf,f\rangle_{L^2(\gamma)} \ge (2\rho)^s \frac{\Gamma\left(\frac{n/2+1+s}{2}\right) }{\Gamma\left(\frac{n/2+1-s}{2}\right) } \int_{\R^n}\frac{f(x)^2}{(\rho+|x|^2)^s}
  w_s(\rho+|x|^2)  \,\,d\gamma(x) \]
  for an explicit  $ w_s(t) \geq 1.$ The inequality is sharp and equality is attained for $ f(x) = \phi_{-s,\rho}(x).$ 
  \end{thm}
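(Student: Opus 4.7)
The plan is to deduce the theorem from the general trace Hardy inequality, specialized via Corollary \ref{preHard}, by making the explicit choice $\phi=\phi_{-s,\rho}$ introduced in Section 4.2. With this choice, Lemma \ref{phichange} gives
\[
U_{s}\phi_{-s,\rho}(x) \;=\; \frac{\Gamma((n/2+1+s)/2)^{2}}{\Gamma((n/2+1-s)/2)^{2}}\,(4\rho)^{s}\,\phi_{s,\rho}(x),
\]
so that Corollary \ref{preHard} yields
\[
\langle U_{s}f,f\rangle_{L^{2}(\gamma)} \;\geq\; \frac{\Gamma((n/2+1+s)/2)^{2}}{\Gamma((n/2+1-s)/2)^{2}}\,(4\rho)^{s}\int_{\R^{n}} f(x)^{2}\,\frac{\phi_{s,\rho}(x)}{\phi_{-s,\rho}(x)}\,d\gamma(x).
\]
The first step is therefore to verify the side condition of Corollary \ref{preHard}, namely that $\phi_{-s,\rho}\in L^{2}(\gamma)$ and that $\phi_{-s,\rho}^{-1}U_{s}\phi_{-s,\rho}$ is locally integrable; both follow from the explicit formula \eqref{phiexp} together with the smoothness and positivity of $K_{\nu}$ on $(0,\infty)$.

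Next I would compute the ratio $\phi_{s,\rho}/\phi_{-s,\rho}$ using Proposition \ref{phical}. The Gaussian factors $e^{|x|^{2}/2}$ cancel, and what remains is
\[
\frac{\phi_{s,\rho}(x)}{\phi_{-s,\rho}(x)} \;=\; 2^{-s}\,\frac{\Gamma((n/2+1-s)/2)}{\Gamma((n/2+1+s)/2)}\,(\rho+|x|^{2})^{-s}\,\frac{K_{(n/2+1+s)/2}(\rho+|x|^{2})}{K_{(n/2+1-s)/2}(\rho+|x|^{2})}.
\]
Substituting this in and simplifying the gamma and power-of-$2$ constants produces
\[
\langle U_{s}f,f\rangle_{L^{2}(\gamma)}\;\geq\;(2\rho)^{s}\,\frac{\Gamma((n/2+1+s)/2)}{\Gamma((n/2+1-s)/2)}\int_{\R^{n}}\frac{f(x)^{2}}{(\rho+|x|^{2})^{s}}\,w_{s}(\rho+|x|^{2})\,d\gamma(x),
\]
with the explicit weight
\[
w_{s}(t)\;:=\;\frac{K_{(n/2+1+s)/2}(t)}{K_{(n/2+1-s)/2}(t)}.
\]

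The third step is to check $w_{s}(t)\geq 1$ for all $t>0$. Since $0<s<1$ and $n\geq 1$, both orders $(n/2+1\pm s)/2$ are positive, and one has the classical monotonicity $\nu\mapsto K_{\nu}(t)$ is non-decreasing on $[0,\infty)$ for each fixed $t>0$; this can be read off the integral representation $K_{\nu}(t)=\tfrac12(t/2)^{\nu}\int_{0}^{\infty}e^{-u-t^{2}/(4u)}u^{-\nu-1}\,du$ after a logarithmic derivative in $\nu$, or invoked directly. As $(n/2+1+s)/2>(n/2+1-s)/2$, this gives $w_{s}(t)\geq 1$ pointwise.

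Finally, for sharpness and the extremizer, I would track the equality case in Lemma \ref{L2.1}: the square $|\nabla_{U}u-(u/v)\nabla_{U}v|^{2}$ vanishes precisely when $u/v$ is constant. In the derivation of Corollary \ref{preHard}, $u$ is taken to be the extension $T_{s,\rho}f$ of $f$ and $v$ the extension of the chosen $\phi$. Choosing $f=\phi_{-s,\rho}$ forces $u=v$, so the integrand vanishes identically, all intermediate inequalities become equalities, and the stated equality is attained. The main obstacle I anticipate is bookkeeping: making sure the $(4\rho)^{s}$, the Gamma ratios produced by \eqref{phiexp} for $s$ versus $-s$, and the power $2^{-s}$ from the $K_{\nu}$-normalization combine to give exactly $(2\rho)^{s}\,\Gamma((n/2+1+s)/2)/\Gamma((n/2+1-s)/2)$ with nothing left over; everything else is a direct substitution plus the monotonicity of $K_{\nu}$ in its order.
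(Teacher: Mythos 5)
Your proposal is correct and follows essentially the same route as the paper: take $\phi=\phi_{-s,\rho}$ in Corollary \ref{preHard}, use Lemma \ref{phichange} and Proposition \ref{phical} to reduce $U_s\phi/\phi$ to the weight $w_s(t)=K_{(n/2+1+s)/2}(t)/K_{(n/2+1-s)/2}(t)$ times $(2\rho)^s\Gamma(\tfrac{n/2+1+s}{2})/\Gamma(\tfrac{n/2+1-s}{2})\,(\rho+|x|^2)^{-s}$, invoke monotonicity of $K_\nu$ in $\nu$, and check equality at $f=\phi_{-s,\rho}$. In fact your ratio $\phi_{s,\rho}/\phi_{-s,\rho}=2^{-s}\,\Gamma(\tfrac{n/2+1-s}{2})/\Gamma(\tfrac{n/2+1+s}{2})\,(\rho+|x|^2)^{-s}w_s(\rho+|x|^2)$ is the correct one (the paper's display \ref{phioverphi} has the Gamma quotient inverted, a typo that disappears in the final constant), and your extra check that $\phi_{-s,\rho}\in L^2(\gamma)$ with $\phi_{-s,\rho}^{-1}U_s\phi_{-s,\rho}$ locally integrable is a point the paper leaves implicit.
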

  \begin{proof}
  	 Taking $\phi=\phi_{-s,\rho}$ in \ref{phical}, in view of Lemma \ref{phichange} we have $$\frac{U_s\phi }{\phi}=\frac{\Gamma ((n/2+1+s)/2)^2}{  \Gamma ((n/2+1-s)/2)^2} \, (4\rho)^s\frac{\phi_{s,\rho}}{\phi_{-s,\rho}}.$$ Now we use Proposition \ref{phical} to simplify the right hand side of the above equation. Note that
  	 \begin{align}
  	 \label{phioverphi}
  	 \frac{\phi_{s,\rho}}{\phi_{-s,\rho}} =   \frac{\Gamma\left(\frac{n/2+1+s}{2}\right)}{\Gamma\left(\frac{n/2+1-s}{2}\right)}2^{-s}(\rho+|x|^2)^{-s} \frac{K_{ (n/2+1+s)/2}(\rho+|x|^2)}{K_{ (n/2+1-s)/2}(\rho+|x|^2)}.
  	 \end{align}
  	 Let $$w_{s}(t):=\frac{K_{ (n/2+1+s)/2}(t)}{K_{ (n/2+1-s)/2}(t)},\,\,\,  ~t>0.$$ Now using the fact that for $ t>0$, $K_{\nu}(t)$ is increasing function of $\nu$ we note that $w_s(t)\ge 1, $ for all $t>0$.
  	\[\frac{U_s\phi}{\phi}=2^s\rho^s \frac{\Gamma\left(\frac{n/2+1-s}{2}\right)}{\Gamma\left(\frac{n/2+1+s}{2}\right)} (\rho+|x|^2)^{-s}w_s(\rho+|x|^2).\] Hence the required inequality  follows from Corollary \ref{preHard}.  
  	
  	To see the equality holds for $f(x)=\phi_{-s,\rho}(x)$, using Lemma \ref{phichange} we note that 
  	\begin{align*}
  	\langle U_s\phi_{-s,\rho}, \phi_{-s,\rho}\rangle_{L^2(\gamma)}=\frac{\Gamma ((n/2+1+s)/2)^2}{  \Gamma ((n/2+1-s)/2)^2} \, (4\rho)^s \int_{\R^n}\phi_{-s,\rho}(x)^2\frac{\phi_{s,\rho}(x)}{\phi_{-s,\rho}(x)}d\gamma(x)
  	\end{align*}
  	But the equation \ref{phioverphi} allow us  to write the above as 
  	$$\langle U_s\phi_{-s,\rho}, \phi_{-s,\rho}\rangle_{L^2(\gamma)}=(2\rho)^s\int_{\R^n}\frac{\phi_{-s,\rho}(x)^2}{(\rho+|x|^2)^s}w_s(\rho+|x|^2)d\gamma(x)$$ which proves equality case.
  \end{proof}

  \section{Isometry property for the solution of the extension problem}
  In this section we prove an isometry property of the solution operator associated to the extension problem for Ornstein-Uhlenbeck operator under consideration. Such a property has been studied in the context of extension problem for Laplacian on $\R^n$ and for sublaplacian on $\mathbb{H}^n$ in M\"ollers et al \cite{MOZ}. See also the work of Roncal-Thangavelu \cite{RT2} where they proved similar result in the context of $H$-type groups.

  We consider the Gaussian sobolev space  $\mathcal{H}^s_{\gamma}(\mathbb{R}^n)$ defined via the relation $f\in\mathcal{H}^s_{\gamma}(\mathbb{R}^n) $ if and only if  $L_{s/2}f\in L^2(\gamma)$, where $L_{s/2}$ is the fractional power under consideration. Instead of $ \| L_{s/2}f \|_2 $  we use  the equivalent norm for  this space which is given by  
  \[\|f\|_{(s)}^2:=\langle L_sf,f\rangle_{L^2(\gamma)}=\sum_{\alpha\in\mathbb{N}^n} 2^s\frac{\Gamma\left( \frac{2|\alpha|+n}{2}+\frac{1+s}{2}\right)}{\Gamma\left( \frac{2|\alpha|+n}{2}+\frac{1-s}{2}\right)}|\langle f,H_{\alpha}\rangle_{L^2(\gamma)}|^2 .\]
  Recall that $ H_\alpha $ are the normalised Hermite polynomials on $ \R^n $ forming an orthonormal basis for $ L^2(\gamma).$
  As the solution of the extension problem \ref{ep} is a function of $\rho^2$, it can be thought of as a function of $ (x,y) \in \mathbb{R}^{n+2}$ that is radial in $ y.$ Thus it makes sense  to define $ P_sf(x,y) = u(x, \sqrt{2}|y|) $ where $ u (x,\rho) $ is the solution of the extension problem  \ref{ep} given by \ref{epsol}.  We can now consider $ P_sf(x,y) $ as an element of $ L^2(\R^{n+2}, \gamma) .$   For $(\alpha,j)\in\mathbb{N}^n\times\mathbb{N}^2$ we let   $$H_{\alpha,j}(x,y):=   H_{\alpha}(x)H_{j}(y),\ (x,y)\in\mathbb{R}^n\times\mathbb{R}^2$$ where $H_j $'s are two dimensional   Hermite polynomials . Then $ P_sf(x,y) $ can be expanded in terms of $ H_{\alpha,j}(x,y).$ We will show that $ P_s $ takes $ \mathcal{H}_\gamma^s(\R^n) $  into $ \mathcal{H}^{s+1}(\R^{n+2}).$ 
  We equip $ \mathcal{H}_\gamma^{s+1}(\R^{n+2}) $ with a different but equivalent norm.  For  $u \in  \mathcal{H}_\gamma^{s+1}(\R^{n+2}) $  we define  
  \[ \|u\|_{(1,s)}^2=\displaystyle\sum_{(\alpha,j)\in\mathbb{N}^n\times\mathbb{N}^2}2^{s+1}\frac{\Gamma\left( \frac{2|\alpha|+2|j|+n+1}{2}+\frac{1+(1+s)}{2}\right)}{\Gamma\left( \frac{2|\alpha|+2|j|+n+1}{2}+\frac{1-(1+s)}{2}\right)}|\langle u^j,H_{\alpha }\rangle_{L^2(\gamma)}|^2 \] where for any $j\in \mathbb{N}^2 $ we have let 
  \[u^j(x):=\int_{\mathbb{R}^2 }u(x,y)H_j(y)e^{-|y|^2/2}dy.\] Equipped with this norm we denote the space $ \mathcal{H}_\gamma^{s+1}(\R^{n+2}) $  by $\tilde{\mathcal{H}}^{s+1}_{\gamma}(\mathbb{R}^{n+2}) .$ 
  \begin{thm}
  	For $0<s<n$, $P_s:\mathcal{H}^s_{\gamma}(\mathbb{R}^n)\rightarrow\tilde{\mathcal{H}}^{s+1}_{\gamma}(\mathbb{R}^{n+2})$ is a constant multiple of  an isometry, i.e. $\|P_sf\|_{(1,s)}=C_{n,s}\|f\|_{(s)}$ for all $ f \in \mathcal{H}_\gamma^{s}(\R^{n}). $ 
  \end{thm}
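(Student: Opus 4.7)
The plan is to diagonalize $P_s$ in the Hermite basis of $L^2(\gamma)$, compute the Fourier-Hermite coefficients of $P_s f$ on $\mathbb{R}^{n+2}$ explicitly, and then verify the norm identity as a spectral summation.

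First I would write $f = \sum_{\alpha \in \mathbb{N}^n} \hat f(\alpha)\,H_\alpha$ with $\hat f(\alpha) = \langle f, H_\alpha\rangle_{L^2(\gamma)}$ and apply the representation from Proposition \ref{prop2.4} (transferred to $L$ via $Q_k = M_\gamma^{-1} P_k M_\gamma$ and the $L$-function identity derived in Section~2). Since the solution of the extension problem is a function of $\rho^2$, substituting $\rho = \sqrt{2}|y|$ gives
\[
P_s f(x,y) \;=\; u(x,\sqrt{2}|y|) \;=\; \sum_\alpha \hat f(\alpha)\, H_\alpha(x)\, \Psi_{|\alpha|}(|y|),
\]
where
\[
\Psi_k(r) \;=\; \frac{c_{n,s}}{\Gamma(s)}\, r^{2s}\, L\!\left(\frac{r^2}{2},\,\frac{2k+n+1+s}{2},\,\frac{2k+n+1-s}{2}\right).
\]
Because $\Psi_{|\alpha|}$ is radial in $y$, only the radial sector of the two-dimensional Hermite basis on $\mathbb{R}^2$ contributes to $u^j$. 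The Sobolev weight in $\|\cdot\|_{(1,s)}$ depends solely on the total eigenvalue $2|\alpha|+2|j|+n+1$, so I may replace the tensor-product basis $\{H_j\}_{j\in\mathbb{N}^2}$ by the polar Hermite basis in $\mathbb{R}^2$; the radial elements are proportional to $\phi_m(y) := L_m^0(|y|^2)\,e^{-|y|^2/2}$ and lie in the eigenspace corresponding to $|j|=2m$.

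Next I would expand $\Psi_k(|y|)$ in $\{\phi_m\}$ as
\[
\Psi_k(|y|) \;=\; \sum_m B(k,m)\, L_m^0(|y|^2)\, e^{-|y|^2/2},
\]
with coefficients $B(k,m)$ computable in closed form from the integral representation $L(a,b,c)=\int_0^\infty e^{-a(2x+1)}x^{b-1}(1+x)^{-c}dx$ combined with the classical Laguerre--Beta integrals (equivalently the ${}_1F_1$ transform of $L_m^0$). Substituting into the definition of $\|P_sf\|_{(1,s)}^2$ and invoking orthonormality of $\{H_\alpha\}$ in $L^2(\gamma)$ together with orthogonality of $\{\phi_m\}$ in $L^2(\mathbb{R}^2)$, the norm diagonalizes:
\[
\|P_s f\|_{(1,s)}^2 \;=\; \sum_\alpha |\hat f(\alpha)|^2\, \Sigma_{n,s,|\alpha|},\qquad
\Sigma_{n,s,k}\;=\;\sum_m 2^{s+1}\,\frac{\Gamma\!\bigl(\tfrac{2k+4m+n+3+s}{2}\bigr)}{\Gamma\!\bigl(\tfrac{2k+4m+n+1-s}{2}\bigr)}\,|B(k,m)|^2.
\]
The theorem then reduces to the identity
\[
\Sigma_{n,s,k} \;=\; C_{n,s}^2 \cdot 2^s\, \frac{\Gamma\!\bigl(\tfrac{2k+n+1+s}{2}\bigr)}{\Gamma\!\bigl(\tfrac{2k+n+1-s}{2}\bigr)},
\]
with the constant $C_{n,s}$ independent of $k$.

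The main obstacle will be the evaluation of $\Sigma_{n,s,k}$. After the explicit substitution of $B(k,m)$, the $m$-series takes the form of a Gauss hypergeometric ${}_2F_1(a,b;c;1)$ whose parameters are affine in $k$, $n$, $s$; Gauss's summation formula
\[
{}_2F_1(a,b;c;1)\;=\;\frac{\Gamma(c)\,\Gamma(c-a-b)}{\Gamma(c-a)\,\Gamma(c-b)}
\]
(applicable in the regime $\Re(c-a-b)>0$ enforced by the assumption $0<s<n$) should collapse the series into a single gamma ratio which, after cancellation, matches the right-hand side exactly and identifies $C_{n,s}$. As a sanity check, evaluating both $\|P_sf\|_{(1,s)}$ and $\|f\|_{(s)}$ on the one-dimensional reduction $f = H_\alpha$ for a single multi-index (equivalently, a single eigenspace $Q_k$) pins down $C_{n,s}$ directly and confirms its independence of $k$, completing the proof.
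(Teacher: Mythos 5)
Your overall architecture (diagonalize in the Hermite basis, expand the radial profile in $y$ in an eigenbasis of the two--dimensional Hermite operator, and reduce the norm identity to a hypergeometric summation at $1$) is the same as the paper's, but two specific steps would not go through as you describe. First, your $\Psi_k(r)=\frac{c_{n,s}}{\Gamma(s)}r^{2s}L\bigl(\tfrac{r^2}{2},\tfrac{2k+n+1+s}{2},\tfrac{2k+n+1-s}{2}\bigr)$ retains the prefactor $r^{2s}$. With that factor present, the Laguerre--Beta integral giving $B(k,m)$ is $\int_0^\infty u^{s}e^{-pu}L_m^0(u)\,du=\Gamma(s+1)p^{-s-1}\,{}_2F_1(-m,s+1;1;p^{-1})$, a degree-$m$ polynomial in $p^{-1}$ rather than a single gamma ratio; $|B(k,m)|^2$ is then itself a double sum and $\Sigma_{n,s,k}$ does not collapse by Gauss. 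The paper's essential first move is to apply the identity \eqref{Ls1} (equivalently the transformation property \eqref{3.7}) to rewrite $P_sf=c_s\,T_{-s,\sqrt2|y|}(L_sf)$: this cancels the $|y|^{2s}$ prefactor, leaves a pure Gaussian-weighted Laplace transform $e^{-|y|^2/2}\int_0^\infty e^{-t|y|^2}t^{\beta-1}(1+t)^{-\alpha}dt$ in $y$, and simultaneously inserts the factor $\Gamma\bigl(\tfrac{2k+n+1+s}{2}\bigr)/\Gamma\bigl(\tfrac{2k+n+1-s}{2}\bigr)$ that is needed for $\|f\|_{(s)}^2$ to emerge at the end. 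Your proposal never performs this reduction, and without it the ``closed form'' for $B(k,m)$ you rely on does not exist.

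Second, even after this reduction your single radial sum over $m$ is not a ${}_2F_1(a,b;c;1)$. The Sobolev weight contributes $\Gamma(\,\cdot+2m)/\Gamma(\,\cdot+2m)$ while the squared coefficients contribute $\Gamma(\,\cdot+m)^2/\Gamma(\,\cdot+m)^2$; after Legendre duplication the term ratio is a quotient of quartics in $m$, i.e.\ the series is a ${}_5F_4$-type sum at $1$, to which Gauss's theorem does not apply directly. The paper sidesteps this precisely by \emph{not} passing to the radial Laguerre basis: it expands the Laplace transform via Mehler's formula in the tensor basis $H_{j_1}(y_1)H_{j_2}(y_2)$, uses $H_{2l}(0)^2=\pi^{-1/2}\Gamma(l+\tfrac12)/\Gamma(l+1)$, and evaluates the resulting double sum over $(j_1,j_2)$ by two \emph{iterated} applications of Gauss's formula (first in $j_1$ with $j_2$ fixed, then in $j_2$), each of which is a genuine ${}_2F_1$ at $1$. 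Your radial reduction aggregates the anti-diagonals of this lattice sum and destroys the iterated structure. Your sanity check on a single eigenspace would confirm the constant only after the summation is actually carried out, so it does not repair either gap.
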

  
  \begin{proof}
  	We have  
  	$$P_sf(x,y)=\sum_{k=0}^{\infty} \frac{2^{-s}}{\Gamma(s)} \, (\sqrt{2}|y|)^{2s} \, L\left(\frac{1}{2} \, |y|^2,\frac{2k+n}{2}+\frac{1+s}{2},\frac{2k+n}{2}+\frac{1-s}{2}\right) \, Q_kf.$$ Now from \ref{Ls1}  we note that
  	\begin{align*}
  	&P_sf(x,y)=T_{-s,\sqrt{2}|y|} (L_sf)(x)\\
  	&=\sum_{k=0}^{\infty} \frac{4^{s}}{\Gamma(-s)} \,   \, L\left(\frac{1}{2} \, y^2,\frac{2k+n}{2}+\frac{1-s}{2},\frac{2k+n}{2}+\frac{1+s}{2}\right) \,   \, \frac{\Gamma(\frac{2k+n}{2}+\frac{1+s}{2})}{\Gamma(\frac{2k+n}{2}+\frac{1-s}{2})} \, Q_kf.
  	\end{align*}
  	Now writing $a:=\frac{2k+n}{2}+\frac{1+s}{2}$ and $b:=\frac{2k+n}{2}+\frac{1-s}{2}$, we expand  $ L(y^2/2,a,b)$ in terms of Hermite polynomials. In order to do that we use Mehler's formula (see \cite[Chapter 1]{WU}) for 2- dimensional normalised Hermite polynomials:
  	\begin{align*}
  	\sum_{j\in\mathbb{N}^2} H_{j}(x) H_{j}(y)r^{|j|}=(1-r^2)^{-1 }e^{-\frac{r^2(|x|^2+|y|^2)}{1-r^2}-\frac{2rx.y}{1-r^2}}.
  	\end{align*}  
  	In view of the definition of the  $L$ function we have 
  	\[L(|y|^2/2,a,b)=e^{-|y|^2/2}\int_{0}^{\infty}e^{-t|y|^2 }t^{a-1}(1+t)^{-b}dt.\]
  	Now taking $r^2=\frac{t}{1+t}$ in   the above Mehler's formula, we have 
  	\begin{align*}
  	e^{-t|y|^2}=(1+t)^{-1}\sum_{j\in\mathbb{N}^2}H_{j}(0) H_{j}(y)\left(\frac{t}{1+t}\right)^{|j|/2}
  	\end{align*}
  	which yields
  	\begin{align*} 
  	L(y^2/2,a,b)&=e^{-|y|^2/2}\sum_{j\in\mathbb{N}^2}H_{j}(0) H_{j}(y)\int_{0}^{\infty} t^{a+|j|/2-1}(1+t)^{-b-|j|/2-1}dt\\
  	&=e^{-|y|^2/2}\sum_{j\in\mathbb{N}^2}H_{j}(0) H_{j}(y)\frac{\Gamma(a+|j|/2)\Gamma(b-a+1)}{\Gamma(b+|j|/2+1)}. 
  	\end{align*}
  	Here the second equality follows from the following formula: 
  	\[\int_{0}^{\infty}(1+t)^{-b}t^{a-1}dt=\frac{\Gamma(a)\Gamma(b-a)}{\Gamma(b)}.\]
  	Finally writing $P_sf(x,y)=v(x,y)$ and  using the above observations we have  
  	\begin{align}
  	v(x,y)=c_s e^{-|y|^2/2}\!\!\!\!\!\!\!\!\displaystyle\sum_{(\alpha,j)\in \mathbb{N}^n\times \mathbb{N}^2}\!\!\!\!\!\!\!\! H_{j}(0) H_{j}(y)\frac{\Gamma(a+|j|/2)\Gamma(b-a+1 )}{\Gamma(b+|j|/2+1)}\frac{\Gamma(\frac{2|\alpha|+n}{2}+\frac{1+s}{2})}{\Gamma(\frac{2|\alpha|+n}{2}+\frac{1-s}{2})} \langle f, H_{\alpha }\rangle_{L^2(\gamma)}H_{\alpha}(x) 
  	\end{align}
  	where $c_s:=\frac{4^s}{\Gamma(-s)}.$  
  	Now note that for any $j\in\mathbb{N}^2$ we obtain
  	\begin{align*}
  	v^j(x)=c_{ s}\sum_{ \alpha  \in \mathbb{N}^n } H_{j}(0) \frac{\Gamma(a+|j|/2)\Gamma(b-a+1)}{\Gamma(b+|j|/2+1)}\frac{\Gamma(\frac{2|\alpha|+n}{2}+\frac{1+s}{2})}{\Gamma(\frac{2|\alpha|+n}{2}+\frac{1-s}{2})} \langle f, H_{\alpha }\rangle_{L^2(\gamma)}H_{\alpha}(x) 
  	\end{align*}
  	which yields
  	\begin{align*}
  	\langle v^j, H_{\alpha }\rangle_{L^2(\gamma)}=c_s H_{j}(0) \frac{\Gamma(a+|j|/2)\Gamma(b-a+1)}{\Gamma(b+|j|/2+1)}\frac{\Gamma(\frac{2|\alpha|+n}{2}+\frac{1+s}{2})}{\Gamma(\frac{2|\alpha|+n}{2}+\frac{1-s}{2})} \langle f, H_{\alpha }\rangle_{L^2(\gamma)}.
  	\end{align*}
  	As shown in \cite{WU}, for any $k \in \mathbb{N}$ and for one dimensional Hermite polynomials we have 
  	\begin{align*}
  	H_{2k+1}(0)=0 \ ~\text{and}\ ~ (H_{2k}(0))^2=\frac{2^{-2k}\Gamma(2k+1)}{\Gamma(k+1)^2}.
  	\end{align*}
  	But making use of the formula $\Gamma(2z)=(2\pi)^{-1/2}2^{2z-1/2}\Gamma{(z)}\Gamma(z+1/2)$ we obtain 
  	\begin{align*}
  	(H_{2k}(0))^2=\frac{1}{\sqrt{ \pi}}\frac{\Gamma(k+1/2)}{\Gamma(k+1)}.
  	\end{align*}
  	Hence for $j=(j_1,j_2)\in \mathbb{N}^n$ we have 
  	\[(H_{2j}(0))^2=\frac{1}{ \pi}\frac{\Gamma(j_1+1/2)\Gamma(j_2+1/2)}{\Gamma(j_1+1)\Gamma(j_2+1)}.\]
  	With these things in hand we proceed to calculate $ \|v\|_{(1,s)}^2 $ which is given by a constant multiple of   
  	$$ \sum_{k=0}^{\infty}\sum_{ j \in \mathbb{N}^2}     \frac{\Gamma\left( \frac{2k+2|j|+n+1}{2}+\frac{1+(1+s)}{2}\right)}{\Gamma\left( \frac{2k+2|j|+n+1}{2}+\frac{1-(1+s)}{2}\right)}\left|H_{j}(0) \frac{\Gamma(a+|j|/2)\Gamma(b-a+1)}{\Gamma(b+|j|/2+1)}\frac{\Gamma(\frac{2k +n}{2}+\frac{1+s}{2})}{\Gamma(\frac{2k+n}{2}+\frac{1-s}{2})} \right|^2 \|Q_kf\|^2
  	$$ where $\|Q_kf\|^2=\displaystyle
  	\sum_{|\alpha|=k}\left|\langle f,H_{\alpha}\rangle_{L^2(\gamma)}\right|^2$.
  	Now we have already noted the fact that $H_{2k+1}(0)=0$.  In what follows both $ j_1$ and $ j_2 $ should be even. Using the values of $a$ and $b$ we have 
  	\begin{align*}
  	&\sum_{j=(j_1,j_2)\in\mathbb{N}^2}\frac{\Gamma\left( \frac{2k+2|j|+n+1}{2}+\frac{1+(1+s)}{2}\right)}{\Gamma\left( \frac{2k+2|j|+n+1}{2}+\frac{1-(1+s)}{2}\right)}\left(H_{j}(0) \frac{\Gamma(a+|j|/2)\Gamma(b-a+1)}{\Gamma(b+|j|/2+1)}\right)^2\\ &=\frac{\Gamma(s+1)^2}{\pi}\sum_{j\in\mathbb{N}^2}\frac{\Gamma\left( \frac{2k+2|j|+n+1}{2}+\frac{1-(1+s)}{2}\right)}{\Gamma\left( \frac{2k+2|j|+n+1}{2}+\frac{1+(1+s)}{2}\right)}\frac{\Gamma(j_1+1/2)\Gamma(j_2+1/2)}{\Gamma(j_1+1)\Gamma(j_2+1)}.
  	\end{align*}
  	In order to simplify this further we make use of some properties of Hypergeometric functions. We start with recalling that 
  	\begin{align*}
  	F(\delta, \beta, \eta,z)=\sum_{k=0}^{\infty}\frac{(\delta )_k(\beta)_k}{(\eta)_kk!}z^k=\frac{\Gamma(\eta)}{\Gamma(\delta)\Gamma(\beta)}\sum_{k=0}^{\infty}\frac{\Gamma(\delta+k)\Gamma(\beta+k)}{\Gamma(\eta+k)\Gamma(k+1)}z^k.
  	\end{align*}  Here we will be using the following property proved in \cite{OM}:
  	\begin{align}
  	&\frac{\Gamma(\eta)\Gamma(\eta-\delta-\beta)}{\Gamma(\eta-\delta)\Gamma(\eta-\beta)}=F(\delta,\beta,\eta,1)=\frac{\Gamma(\eta)}{\Gamma(\delta)\Gamma(\beta)}\sum_{k=0}^{\infty}\frac{\Gamma(\delta+k)\Gamma(\beta+k)}{\Gamma(\eta+k)\Gamma(k+1)}.\nonumber
  	\end{align}
  	That is, 
  	\begin{align}
  	\sum_{k=0}^{\infty}\frac{\Gamma(\delta+k)\Gamma(\beta+k)}{\Gamma(\eta+k)\Gamma(k+1)}= \frac{\Gamma(\beta )\Gamma(\eta-\delta-\beta)\Gamma(\delta)}{\Gamma(\eta-\delta)\Gamma(\eta-\beta)}\ \text{provided}\ \Re(\eta-\delta-\beta)>0.
  	\end{align}
  	Taking $\delta=\frac{2k+2j_2+n+1-s}{2},\ \beta=\frac{1}{2} $ and $\eta=\frac{2k+2j_2+n+3+s}{2}$ in the above formula we have 
  	\begin{align*} 
  	\sum_{j_1=0}^{\infty}\frac{\Gamma(\delta+j_1)\Gamma(\beta+j_1)}{\Gamma(\eta+j_1)\Gamma(j_1+1)}= \frac{\Gamma(s+1/2)\Gamma(1/2)}{\Gamma(s+1 )} \frac{\Gamma(\frac{2k+2j_2+n+1-s}{2})}{\Gamma(\frac{2k+2j_2+n+2+s}{2})}.
  	\end{align*}
  	This gives 
  	\begin{align*}
  	&\sum_{j\in\mathbb{N}^2}\frac{\Gamma\left( \frac{2k+2|j|+n+1}{2}+\frac{1-(1+s)}{2}\right)}{\Gamma\left( \frac{2k+2|j|+n+1}{2}+\frac{1+(1+s)}{2}\right)}\frac{\Gamma(j_1+1/2)\Gamma(j_2+1/2)}{\Gamma(j_1+1)\Gamma(j_2+1)}\\&=  \frac{\Gamma(s+1/2)\Gamma(1/2)}{\Gamma(s+1)} \sum_{j_2=0}^{\infty}\frac{\Gamma(\frac{2k +n+1-s}{2}+j_2)\Gamma(j_2+1/2)}{\Gamma(\frac{2k +n+2+s}{2}+j_2)\Gamma(j_2+1) }\\
  	&= \frac{\Gamma(s+1/2)\Gamma(1/2)}{\Gamma(s+1)} \frac{\Gamma(\frac{2k +n+1-s}{2})}{\Gamma(\frac{2k +n+1+s}{2})}\frac{\Gamma(s)\Gamma(1/2)}{\Gamma(s+1/2)}\\
  	&=\frac{\Gamma(1/2)^2}{s}\frac{\Gamma(\frac{2k +n+1-s}{2})}{\Gamma(\frac{2k +n+1+s}{2})}.
  	\end{align*}
  	Therefore, we have 
  	\begin{align*}
  	\|v\|_{(1,s)}^2& =c_s^2\Gamma(s+1)^2\frac{2\Gamma(1/2)^2}{\pi s} \displaystyle\sum_{k=0}^{\infty}2^{s}\frac{\Gamma(\frac{2k +n+1-s}{2})}{\Gamma(\frac{2k +n+1+s}{2})}\|Q_kf\|^2\\ &=c_{n,s}\displaystyle\sum_{  \alpha \in\mathbb{N}^n }2^{s}\frac{\Gamma(\frac{2|\alpha| +n+1-s}{2})}{\Gamma(\frac{2|\alpha| +n+1+s}{2})}|\langle f, H_{\alpha }\rangle_{L^2(\gamma)}|^2 \\
  	&=c_{n,s} \|f\|^2_{(s)}.
  	\end{align*}
  	This completes the proof. 
  \end{proof}
  
  \section{Hardy-Littlewood-Sobolev inequality for $H_s$}	In this section we are interested in Hardy-Littlewood-Sobolev inequality for the fractional powers $ H_s.$ For the Laplacian on $ \R^n $ and sublaplacian on $ \mathbb{H}^n$ such inequalities with sharp constants are known. Let us  recall the inequality for the sublaplacian $ \mathcal{L} $ on $\mathbb{H}^n.$   Letting $q=\frac{2(n+1)}{n+1-s}, $  the Hardy-Littlewood-Sobolev inequality for $\mathcal{L}_s$  (see \cite{BFM}, \cite{FL}) reads as
\begin{eqnarray} \label{HLSH}
 \frac{\Gamma (\frac{1+n+s}{2})^2}{\Gamma (\frac{1+n-s}{2})^2} \, w_{2n+1}^{\frac{s}{n+1}} \, \left(\int_{\mathbb{H}^n} |g(z,w)|^q dz dw\right)^{\frac{2}{q}}  \leq  \langle \mathcal{L}_s g,g\rangle . 
\end{eqnarray}  
 
 
We first find an integral representation of $H_{-s}$ using the  integral representation of fractional power of the sublaplacian, $\mathcal{L}_{-s}$. The integral kernel of $\mathcal{L}_{-s}$ is given by $c_{n,s}|(z,t)|^{-Q+2s}$ as shown by Roncal-Thangavelu in \cite{RT1}. Here $|(z,t)|:=(|z|^4+t^2)^{1/4}$ denotes the Koranyi norm on the Heisenberg group and $Q=2n+2$ is its homogeneous dimension. We consider the Schrodinger representation $\pi_{\lambda}$ of $\mathbb{H}^n$ whose action on the representation space  $L^2(\mathbb{R}^n)$ is given by 
\[\pi_{\lambda}(z,t)\phi(\xi)=e^{i\lambda t}e^{i\lambda\left(x\xi+\frac{1}{2}x.y\right)}\phi(\xi+y).\]
The Fourier transform of a function $ f \in L^1(\mathbb{H}^n) $ is the operator valued function defined on the set of all nonzero reals, $ \R^\ast $ given by
$$  \hat{f}(\lambda) = \int_{\mathbb{H}^n}   f(z,t)  \pi_\lambda(z,t) dz dt .$$ The action of the Fourier transform on function of the form $\mathcal{L}$ is well-known and is given by $\widehat{\mathcal{L}f}(\lambda)=\widehat{f}(\lambda)H(\lambda)$ where $H(\lambda) $ is the scaled Hermite operator.  In view of this,  it can be easily checked that 
 \begin{equation}
 \label{HLrel}
d\pi_{\lambda}(m(\mathcal{L}))=m(H(\lambda))
\end{equation} where   $d\pi_{\lambda}$ stands for the derived representation corresponding to $\pi_{\lambda}$.  Recall that the fractional power $\mathcal{L}_{-s}$ is defined as follows (see Roncal-Thangavelu \cite{RT1})  

\[\mathcal{L}_{-s}f(z,t):=(2\pi)^{-n-1}\int_{-\infty}^{\infty
}\left(\sum_{k=0}^{\infty}(2|\lambda|)^{-s}\frac{\Gamma (\frac{2k+n}{2}+\frac{1-s}{2})}{\Gamma(\frac{2k+n}{2}+\frac{1+s}{2})}f^{\lambda}\ast_{\lambda}\varphi^{\lambda}_{k}(z)\right)e^{-i\lambda t}|\lambda|^nd\lambda.\] So we have $d\pi_{\lambda}( \mathcal{L}_{-s})= H(\lambda)_{-s} .$ In particular for $\lambda=1$, using spectral decomposition we have  
\[H_{-s}f=\sum_{k=0}^{\infty}2^{-s}\frac{\Gamma (\frac{2k+n}{2}+\frac{1-s}{2})}{\Gamma(\frac{2k+n}{2}+\frac{1+s}{2})}P_kf.\] 
Now it is not hard to see that 
\begin{equation}
H_{-s}(fe^{-|.|^2/2})(x)=e^{-|x|^2/2}\sum_{k=0}^{\infty}2^{-s}\frac{\Gamma (\frac{2k+n}{2}+\frac{1-s}{2})}{\Gamma(\frac{2k+n}{2}+\frac{1+s}{2})}Q_kf(x).
\end{equation}
Hence from the definition of $L_{-s}$ we have 
\begin{equation}
\label{relHL}
H_{-s}(fe^{-|.|^2/2})(x)= e^{-|x|^2/2}L_{-s}f(x).
\end{equation}
In this section, we prove an analogue of \eqref{HLSH}  for the operator $ H_{-s}.$ We first study $L^p-L^q$ mapping properties  of the operator $H_{-s}.$

In view of this relation \ref{HLrel}  we have 
$$H_{-s}f(\xi)=c_{n,s}\int_{\mathbb{H}^n}|(z,t)|^{-Q+2s}\pi_{1}(z,t)f(\xi)dzdt .$$

Using the definition of $\pi_{1}$ and writting $z=x+iy$, we obtain 
\begin{eqnarray*} 
	H_{-s}f(\xi)\!\!\!&&=c_{n,s}\int_{\mathbb{H}^n}((|x|^2+|y|^2)^2+t^2)^{-\frac{n+1-s}{2} }e^{i\lambda t}e^{i\lambda\left(x\xi+\frac{1}{2}x.y\right)}f(\xi+y)dx \, dy \, dt\\
	&&=c_{n,s}\int_{\mathbb{H}^n}((|x|^2+|\eta-\xi|^2)^2+t^2)^{-\frac{n+1-s}{2} }e^{i \lambda t}e^{i\frac{\lambda}{2}\left(x\xi+x.\eta\right)}f(\eta)dx \, d\eta \, dt\\
	&&= \int_{\R^n} K_H^s(\xi,\eta) \, f(\eta) \, d\eta,
\end{eqnarray*} 
where  the kernel $ K_H^s $ is defined by 
\begin{equation}
\label{HLSker}
K_H^s(\xi,\eta)=c_{n,s}\int_{\mathbb{R}^n\times \R}((|x|^2+|\eta-\xi|^2)^2+t^2)^{-\frac{n+1-s}{2} }e^{i \lambda t}e^{i\frac{\lambda}{2}\left(x\xi+x.\eta\right)}dx  \, dt.\end{equation}
After taking the modulus and then a change of variables leads to 
$$ |K_H^s(\xi,\eta)|\leq c_{n,s}\int_{\mathbb{R}^n}(|x|^2+|\eta-\xi|^2)^{-n+s } \, dx.$$
Now again a change of variable $x\rightarrow x|\xi-\eta|$ yields  \begin{equation}
|K_{H}^s(\xi,\eta)|\leq C_{n,s} |\xi-\eta|^{-n+2s}.
\end{equation}  Now it is a routine matter to check the following $L^p-L^q$ boundedness property, see e.g., \cite[Theorem 6.1.3 ]{LG}. In fact,  for $1<p<q<\infty$ with $\frac{1}{p}-\frac{1}{q}=\frac{2s}{n} $ we get   
\begin{equation}
\|H_{-s}f\|_{L^q} \leq C_{n,s}(p) \, \|f\|_{L^p}.
\end{equation}
 
Nevertheless, in the following theorem we obtain better estimate for the kernel improving the above mentioned $L^p-L^q$ estimates. 

\begin{thm} \label{thm5.1}
 For any $1\le p\leq q<\infty$ with $\frac1p-\frac1q\leq1$ there exists a constant $C_{n,s}(p)$ such that for all $f \in L^p(\R^n)$, the inequality $\|H_{-s}f\|_{L^q} \leq C_{n,s}(p) \, \|f\|_{L^p}$ holds.
\end{thm}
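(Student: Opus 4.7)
The plan is to refine the pointwise bound $|K_H^s(\xi,\eta)| \le C|\xi-\eta|^{-n+2s}$ on the kernel, which was already established in the paragraphs preceding the theorem via a crude absolute-value estimate. That bound alone only recovers the classical Hardy--Littlewood--Sobolev range $\frac{1}{p}-\frac{1}{q}=\frac{2s}{n}$, so the strategy is to extract the true decay of $K_H^s$ at infinity and use it together with Young's convolution inequality, Schur's test, and Marcinkiewicz interpolation.

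The first technical step is to perform the $t$-integration in \eqref{HLSker} explicitly, using the classical identity
$$\int_{\R} (A^2+t^2)^{-\frac{n+1-s}{2}} e^{it}\,dt = c_{n,s}\, A^{-\frac{n-s}{2}} K_{\frac{n-s}{2}}(A), \qquad A=|x|^2+|\xi-\eta|^2,$$
where $K_\nu$ is the Macdonald function. This rewrites $K_H^s(\xi,\eta)$ as an $x$-integral of a Macdonald-function factor against the oscillatory phase $e^{ix\cdot(\xi+\eta)/2}$. Discarding the phase (whose modulus is one) and invoking the asymptotics $K_\nu(A)\lesssim A^{-1/2}e^{-A}$ for $A\to\infty$ and $K_\nu(A)\lesssim A^{-\nu}$ for $A\to 0^+$, the remaining $x$-integral can be majorized to produce a radial bound $|K_H^s(\xi,\eta)|\le G(\xi-\eta)$, where $G(z)\lesssim |z|^{-(n-2s)}$ for $|z|\le 1$ and $G(z)\lesssim e^{-c|z|^2}$ for $|z|\ge 1$. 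This Gaussian decay at infinity is the genuine improvement over the naive Riesz-type bound.

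With the majorant in hand, $H_{-s}$ is dominated pointwise by convolution with $G$, and Young's inequality gives $\|H_{-s}f\|_q \le \|G\|_r \|f\|_p$ for $\frac{1}{r}=1+\frac{1}{q}-\frac{1}{p}$. The Gaussian tail makes $G$ integrable at infinity for every admissible $r$, while the local singularity lies in $L^r_{\text{loc}}$ exactly when $r(n-2s)<n$. This covers the open range $\frac{1}{p}-\frac{1}{q}<\frac{2s}{n}$. The case $p=q$ (where $r=1$) is secured by Schur's test, since $\sup_\xi\int G(\xi-\eta)\,d\eta<\infty$ by the rapid decay of $G$, yielding $L^p\to L^p$ boundedness for all $1\le p\le\infty$. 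The endpoint $\frac{1}{p}-\frac{1}{q}=\frac{2s}{n}$, including the $p=1$ case, follows from weak-$L^{n/(n-2s)}$ membership of $G$ together with Marcinkiewicz interpolation. Riesz--Thorin interpolation among these results then yields the full claimed range.

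The main obstacle is the improved pointwise estimate on $K_H^s$: one has to integrate the Macdonald-function factor against the phase $e^{ix\cdot(\xi+\eta)/2}$, whose dependence on $\xi+\eta$ rather than on $\xi-\eta$ makes it non-obvious that a majorant depending only on $|\xi-\eta|$ is available. The crucial point is that $(|x|^2+|y|^2)^{-(n-s)/2} K_{(n-s)/2}(|x|^2+|y|^2)$ is Gaussian-integrable in $x$ uniformly in $y$, so the oscillatory factor can be discarded without loss; what remains is a routine but careful split into the near-diagonal region (where the Riesz-type singularity dominates) and the far region (where the Gaussian decay takes over).
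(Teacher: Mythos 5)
Your computation of the kernel follows the paper's proof essentially step for step: perform the $t$-integration with the cosine-transform formula to produce the Macdonald factor $A^{-\frac{n-s}{2}}K_{\frac{n-s}{2}}(A)$ with $A=|x|^2+|\xi-\eta|^2$, discard the oscillation in $x$, and integrate out $x$ to obtain a radial majorant $G(\xi-\eta)$ with $G(z)\lesssim |z|^{-(n-2s)}$ near the origin and Gaussian decay at infinity. (The paper reaches the same $G$ by inserting the subordination formula $z^{-\nu}K_\nu(z)=2^{-\nu-1}\int_0^\infty e^{-t-z^2/(4t)}t^{-\nu-1}\,dt$ and resumming into $r^{-(n-2s)/2}K_{(n-2s)/4}(r^2)$, which is only cosmetically different from your direct use of the asymptotics.) Up through Young's inequality your argument is correct, and in one respect more careful than the paper's: you correctly record that $G\in L^r$ only when $r(n-2s)<n$.

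The gap is your last sentence. Every estimate you actually prove --- Young on the open range, Schur's test (which here is just Young with $r=1$) at $p=q$, weak-type Young plus Marcinkiewicz at the endpoint --- lies in the convex region $0\le\frac1p-\frac1q\le\frac{2s}{n}$, so Riesz--Thorin interpolation among them can never produce an exponent pair with $\frac1p-\frac1q>\frac{2s}{n}$; when $2s<n$ the claimed range $\frac1p-\frac1q\le 1$ strictly contains such pairs. Nor can the method be patched: testing on $f=\chi_{B(0,\epsilon)}$ shows that convolution with a kernel of size $|z|^{-(n-2s)}$ near the origin fails to map $L^p$ into $L^q$ once $\frac1p-\frac1q>\frac{2s}{n}$, so the pointwise domination $|H_{-s}f|\le C\,|f|\ast G$ cannot deliver the full stated range. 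You should know that the paper's own proof has exactly the same defect, concealed in the assertion that $\int G^r<\infty$ for every $r\ge 1$ because ``near the origin $z^{-\nu}K_\nu(z)$ is bounded'': it is $z^{\nu}K_\nu(z)$ that is bounded there, and the resulting singularity $G(z)\sim c|z|^{-(n-2s)}$ destroys integrability of $G^r$ for $r\ge n/(n-2s)$. What this argument genuinely establishes is boundedness for $0\le\frac1p-\frac1q<\frac{2s}{n}$ together with the usual endpoint statements, and that is all the subsequent corollaries use (there $p=2$ and $\frac12-\frac1q=\frac{s}{2n}<\frac{s}{n}$ after replacing $s$ by $s/2$); so your more honest bookkeeping exposes a problem with the theorem as stated rather than a misunderstanding of the proof.
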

\begin{proof} 
		 In view  of the formula stated in \ref{formulak}, from \ref{HLSker} we have 
		 \begin{equation}
		 K^s_{H}(\xi,\eta):=2c_{n,s}\frac{\sqrt{\pi}2^{-(n/2+1+s)/2}}{\Gamma(\frac{n+1-s}{2})}\int_{\mathbb{R}^n} (|x|^2+|\eta-\xi|^2)^{-\frac{n+1-s}{2}}K_{-\frac{n+1-s}{2}}(|x|^2+|\eta-\xi|^2)e^{  \frac{i}{2}x.(\eta+\xi) } dx.   
		 \end{equation} 
		  Now we use the integral representation of $ K_\nu $ to simplify the above integral giving the kernel.
		 	$$ K_\nu(z)  = 2^{-\nu-1} z^\nu \int_0^\infty  e^{-t-\frac{z^2}{4t}} t^{-\nu-1} dt.$$
		 	A simple change of variables shows that
		 	$$ z^\nu K_\nu(z) = 2^{\nu-1}   \int_0^\infty  e^{-t-\frac{z^2}{4t}} t^{\nu-1} dt =  z^\nu K_{-\nu}(z).$$
		 	Thus
		 	$$ (|x|^2+|\eta-\xi|^2)^{-\frac{n+1-s}{2}}K_{-\frac{n+1-s}{2}}(|x|^2+|\eta-\xi|^2) = (|x|^2+|\eta-\xi|^2)^{-\frac{n+1-s}{2}}K_{\frac{n+1-s}{2}}(|x|^2+|\eta-\xi|^2)$$ leading to the formula
		 	$$ (|x|^2+|\eta-\xi|^2)^{-\frac{n+1-s}{2}}K_{-\frac{n+1-s}{2}}(|x|^2+|\eta-\xi|^2) = 2^{-\nu-1}  \int_0^\infty  e^{-t-\frac{z^2}{4t}} t^{-\nu-1} dt$$ 
		 	where $ \nu = \frac{n+1-s}{2}$ and $ z = (|x|^2+|\xi-\eta|^2) .$ Writing $a:=\frac{1}{2}(\xi+\eta)$, we   estimate the integral
		 	$$ \int_{\R^n}  e^{-\frac{1}{4t} (|x|^2+r^2)^2} e^{i x \cdot a} dx  $$ where we have let $r=|\xi-\eta|.$  
		 First note that
		   $$ \int_{\R^n}  e^{-\frac{1}{4t} (|x|^2+r^2)^2} e^{i x \cdot  a} dx = e^{-\frac{1}{4t}r^4} \int_{\R^n} e^{i x \cdot a} e^{-\frac{2}{4t} r^2 |x|^2} e^{-\frac{1}{4t}|x|^4} dx.$$
		    Let $ \varphi$ stand for the Fourier transform of the function $ e^{-\frac{1}{4}|x|^4}.$ So the above integral is bounded by
		 	$$ e^{-\frac{1}{4t}r^4} (t/r^2)^{n/2} t^{n/4} \int_{\R^n}  \varphi(t^{1/4} (a-y)) e^{-\frac{t}{2r^2}|y|^2} dy$$ which is bounded by ( after making a change of variables and using $ |\varphi(\xi)| \leq C$)
		 	$$  e^{-\frac{1}{4t}r^4}  t^{n/4} $$ 
		 	and $ K^s_H(\xi,\eta) $ is bounded by
		 	$$   \int_0^\infty  e^{-t} e^{-\frac{r^4}{4t}} t^{-\frac{n+2-2s}{4}-1} dt = r^{- \frac{n+2-2s}{2}} K_{(n+2-2s)/4}(r^2).$$ Finally we have 
		 	\begin{equation}
		 	|K^s_{H}(\xi,\eta)|\leq C |\xi-\eta|^{-(n+2-2s)/2}K_{\frac{n+2-2s}{4}}(|\xi-\eta|^2)=:G(\xi-\eta).
		 	\end{equation}
		 	Now we see that 
		 	\begin{equation}
		 	|H_{-s}f(\xi)|\leq C |f|\ast G(\xi),\ \forall \xi\in\mathbb{R}^n.
		 	\end{equation} 
		 		Now note that for $r\ge 1$, integrating in polar co-ordinates, we have 
		 	\begin{align*}
		 	\int_{\mathbb{R}^n}G(x)^rdx= c_n\int_{0}^{\infty} \left(t^{-(n+2-2s)/2}K_{\frac{n+2-2s}{4}}(t^2)\right)^rt^{n-1}dt.
		 	\end{align*}
		 	Using the facts that $K_{\nu}(z)\sim z^{-1/2}e^{-z}$ for large $z$ and near the origin $z^{-\nu}K_{\nu}(z)$ is bounded, we conclude that the above integral is finite. Now in view of the Young's inequality we have 
		 	\begin{equation}
		 	\||f|\ast G\|_{q}\leq \|f\|_{p}\|G\|_{r},\ \text{where}\ \frac{1}{q}+1=\frac{1}{p}+\frac{1}{r}.
		 	\end{equation} But this is true for any $r\ge 1$. Hence we are done. 
			\end{proof}

  As a corollary to the Theorem \ref{thm5.1} we have the following analogue of the result \ref{HLSH}.
\begin{cor} For  $ q = \frac{2n}{n-s},\,  0 < s< n $ we have the inequality
	\begin{equation}
	\label{HSLL}
	C_{n,s} \, \left(\int_{\mathbb{R}^n} |f(x)|^q dx\right)^{\frac{2}{q}}  \leq  \langle  H_s f, f\rangle  
	\end{equation}
 where $C_{n,s}$ is some constant depending only on $n$ and $s$.
\end{cor}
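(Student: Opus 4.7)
The plan is a standard duality argument built on two ingredients: Theorem~\ref{thm5.1}, which supplies an $L^{q'}\to L^q$ bound for $H_{-s}$, and the observation that, for the spectrally defined fractional powers used here, $H_{-s}$ is literally the inverse of $H_s$. First fix the conjugate exponents $q=\frac{2n}{n-s}$ and $q'=\frac{2n}{n+s}$, noting $\frac{1}{q'}-\frac{1}{q}=\frac{s}{n}\le 1$ for $0<s<n$; applying Theorem~\ref{thm5.1} with $p=q'$ gives
\[\|H_{-s}g\|_{L^q(\R^n)} \le C\,\|g\|_{L^{q'}(\R^n)}.\]
Pairing this inequality against $g$ and invoking H\"older then yields $\langle H_{-s}g,g\rangle \le C\,\|g\|_{L^{q'}}^{2}$.

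Next, I would observe that comparison of the series defining $H_s$ and $H_{-s}$ shows their spectral multipliers are reciprocals of each other, so $H_{-s}=(H_s)^{-1}$ on the appropriate spectral subspace. Let $A:=(H_s)^{1/2}$ denote the positive self-adjoint spectral square root of $H_s$, so that $H_{-s}=A^{-2}$; then by self-adjointness of $A^{-1}$,
\[\langle H_{-s}g,g\rangle = \langle A^{-1}g,A^{-1}g\rangle = \|A^{-1}g\|_{L^2}^{2}.\]
Combining with the previous step shows $A^{-1}\colon L^{q'}\to L^2$ is bounded with norm $\le C^{1/2}$. Since $A^{-1}$ is self-adjoint, the duality $(L^{q'})^{*}=L^{q}$, $(L^2)^{*}=L^{2}$ immediately promotes this to boundedness $A^{-1}\colon L^{2}\to L^{q}$ with the same operator norm.

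To finish, for any $f$ in the (dense) domain of $A$ I would write $f=A^{-1}(Af)$ and apply the $L^2\to L^q$ bound to $Af$:
\[\|f\|_{L^q} = \|A^{-1}(Af)\|_{L^q} \le C^{1/2}\,\|Af\|_{L^2}.\]
Squaring and using $\|Af\|_{L^2}^{2}=\langle A^{2}f,f\rangle = \langle H_s f,f\rangle$ gives $\|f\|_{L^q}^{2}\le C\,\langle H_s f,f\rangle$, which is exactly \eqref{HSLL} with $C_{n,s}=1/C$; the general case follows by density. The argument contains no serious analytic obstacle once Theorem~\ref{thm5.1} is in hand. The one structural feature that makes the dualisation close so cleanly is the identity $H_{-s}=(H_s)^{-1}$, a property of the conformally invariant fractional powers which spares us from having to factor through, e.g., a Riesz-type potential of half the order, as one would need in the Euclidean setting where $(-\Delta)^{-s}$ is the ``square'' of the Riesz potential $I_s$.
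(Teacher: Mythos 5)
Your argument is correct, but it closes the loop differently from the paper. You invoke Theorem~\ref{thm5.1} at the full order $s$ with the dual pair $p=q'=\tfrac{2n}{n+s}$, observe that the spectral multipliers of $H_s$ and $H_{-s}$ are exact reciprocals so that $H_{-s}=(H_s)^{-1}=A^{-2}$ with $A=(H_s)^{1/2}$, and then use self-adjointness of $A^{-1}$ to dualise the resulting $L^{q'}\to L^2$ bound into an $L^2\to L^q$ bound. The paper instead applies Theorem~\ref{thm5.1} at half the order, with $s$ replaced by $s/2$ and $p=2$, to get $\|H_{-s/2}f\|_q\le c\|f\|_2$ directly, substitutes $f\mapsto H_{s/2}f$, and then pays a small price at the end: since $H_{s/2}^2\ne H_s$ exactly, it must argue via Stirling's formula that $H_{s/2}^2$ and $H_s$ differ by an operator bounded on $L^2$. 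Your route trades that Stirling comparison for a duality step, and because you work with the genuine spectral square root $(H_s)^{1/2}$ rather than $H_{s/2}$, the identity $A^2=H_s$ is exact and no comparison of gamma-function ratios is needed; this is a clean and arguably tidier closing. (Amusingly, your final remark that the inverse identity ``spares us from having to factor through a Riesz-type potential of half the order'' describes precisely what the paper does do.) The only loose ends are routine: the pairing $\langle H_{-s}g,g\rangle$ for general $g\in L^{q'}$ and the identity $\langle H_{-s}g,g\rangle=\|A^{-1}g\|_2^2$ should first be established on a dense class (finite Hermite expansions, say) and then extended, and $f$ should be restricted to the form domain of $H_s$; the paper is equally informal on these points, so neither issue is a substantive gap.
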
 
\begin{proof}  Replacing $s$ by $s/2$ and putting $p=2$ in the above theorem we have 
	\begin{align}
	\|H_{-s/2}f\|^2_{q}\leq c_{n,s}\|f\|_2^2
	\end{align} where  $q=\frac{2n}{n-s}.$ Now in the above inequality substituting $f$ by $H_{s/2}f$ we have 
	  $$ \left(\int_{\mathbb{R}^n} |f(x)|^q dx\right)^{\frac{2}{q}}  \leq c_{n,s} \langle  H_{s/2} f, H_{s/2}f\rangle.$$ But in view of Stirling's formula for the gamma function we know that $H^2_{s/2}$ and $H_s$ differ by a bounded operator on $L^2(\mathbb{R}^n).$ Hence the result follows.
\end{proof} 
\begin{cor}[Hardy's inequality for $H_{s}$]
		Let $0<s<1$. Assume that $f\in L^2(\mathbb{R}^n)$ such that $H_{s}f\in L^2(\mathbb{R}^n)$.  Then we have 
	\[\langle H_sf,f\rangle_{L^2(\R^n)}\ge c_{n,s} \int_{\R^n}\frac{f(x)^2}{(1+|x|^2)^s}dx \]
\end{cor}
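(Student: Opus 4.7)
The plan is to derive this Hardy inequality for $H_s$ as a short consequence of the Hardy--Littlewood--Sobolev inequality \ref{HSLL} (proved in the preceding corollary), combined with a single application of H\"older's inequality. Since the hypothesis $0<s<1$ automatically satisfies $0<s<n$, inequality \ref{HSLL} applies and yields
\[ C_{n,s}\Bigl(\int_{\R^n}|f(x)|^q\,dx\Bigr)^{2/q}\le \langle H_sf,f\rangle_{L^2(\R^n)},\qquad q=\frac{2n}{n-s}. \]

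The next step is to bound $\int_{\R^n} f(x)^2(1+|x|^2)^{-s}\,dx$ from above by $\|f\|_q^2$ times an explicit finite constant, using H\"older's inequality with conjugate exponents $q/2$ and $q/(q-2)$. A brief computation gives $q-2=2s/(n-s)$, hence $q/(q-2)=n/s$, so that $(1+|x|^2)^{-s}$ gets raised to the power $n/s$ and produces the weight $(1+|x|^2)^{-n}$. This leads to
\[ \int_{\R^n}\frac{f(x)^2}{(1+|x|^2)^s}\,dx\le \Bigl(\int_{\R^n}|f|^q\,dx\Bigr)^{2/q}\Bigl(\int_{\R^n}(1+|x|^2)^{-n}\,dx\Bigr)^{s/n}. \]
The auxiliary integral $\int_{\R^n}(1+|x|^2)^{-n}\,dx$ is finite for every $n\ge 1$ (in polar coordinates the integrand behaves like $r^{-n-1}$ at infinity), so it contributes an explicit constant $C_n^{s/n}$. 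Combining the two displays yields the claim with $c_{n,s}=C_{n,s}\,C_n^{-s/n}$.

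There is no serious obstacle in this argument: the whole proof is essentially one line once \ref{HSLL} is available. The only point to check is the exponent arithmetic, and here the small miracle is precisely that the HLS exponent $q=2n/(n-s)$ is tied to $s$ in such a way that $q/(q-2)=n/s$, which is exactly what is needed to convert the non-integrable weight $(1+|x|^2)^{-s}$ into the integrable $(1+|x|^2)^{-n}$. One could alternatively mimic the extension-problem/trace-Hardy route used for $U_s$ in Theorem \ref{h-u_s} and transplant it to $H_s$ via the intertwining $H_s=M_\gamma L_s M_\gamma^{-1}$, but the HLS--H\"older derivation is much shorter and gives the inequality immediately.
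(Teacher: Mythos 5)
Your proof is correct and is essentially identical to the paper's: the authors also deduce the inequality from the Hardy--Littlewood--Sobolev bound \ref{HSLL} by a single application of H\"older's inequality with exponents $q/2$ and $q'=n/s$, turning the weight $(1+|x|^2)^{-s}$ into the integrable $(1+|x|^2)^{-n}$. The exponent arithmetic and the resulting constant match the paper's $A(n,s)=\bigl(\int_{\R^n}(1+|x|^2)^{-n}\,dx\bigr)^{s/n}$.
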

\begin{proof}
	Given $f\in L^2(\mathbb{R}^n)$, in view of Holder's inequality we have 
	 \begin{align}
	 \int_{\R^n}\frac{f(x)^2}{(1+|x|^2)^s}dx\leq A(n,s)\left(\int_{\mathbb{R}^n} |f(x)|^q dx\right)^{\frac{2}{q}}  
	 \end{align}
	 where $q=\frac{2n}{n-s}$, $A(n,s):=\left(\int_{\R^n}(1+|x|^2)^{-sq'}dx\right)^{1/q'} $ and $\frac{1}{q'}=1-\frac{2n-2s}{2n}=\frac{s}{n}.$ Hence the result follows from the previous corollary.   
\end{proof}
As a consequence of this we have a version of  Hardy's inequality for $L_s$:
\begin{cor}
	 	Let $0<s<1$. Assume that $f\in L^2(\gamma)$ such that $L_{s}f\in L^2(\gamma)$.  Then we have 
	 \[\langle L_sf,f\rangle_{L^2(\gamma)}\ge c_{n,s}     \int_{\R^n}\frac{f(x)^2}{(1+|x|^2)^s}d\gamma(x) \]
\end{cor}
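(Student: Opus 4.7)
The plan is to transfer the Hardy inequality for $H_s$ on $L^2(\R^n)$ directly to $L_s$ on $L^2(\gamma)$ via the unitary conjugation by $M_\gamma$. Recall from the introduction that $M_\gamma L M_\gamma^{-1}=H$ and more importantly that the projections obey $Q_k=M_\gamma^{-1}P_kM_\gamma$. Since the two fractional powers are defined by the same spectral multiplier applied to $Q_k$ and $P_k$ respectively, this intertwining promotes at once to
$$L_s = M_\gamma^{-1}H_sM_\gamma.$$
Moreover, $M_\gamma:L^2(\gamma)\to L^2(\R^n)$ is an isometry, because $\|M_\gamma f\|_{L^2(\R^n)}^2=\int |f(x)|^2\gamma(x)\,dx=\|f\|_{L^2(\gamma)}^2$.

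Given $f\in L^2(\gamma)$ with $L_sf\in L^2(\gamma)$, set $g:=M_\gamma f$. Then $g\in L^2(\R^n)$ and $H_sg=H_sM_\gamma f=M_\gamma L_sf\in L^2(\R^n)$, so $g$ satisfies the hypotheses of the previous corollary. A short bracket computation using the self-adjointness of $M_\gamma$ (viewed as multiplication by $\gamma^{1/2}$ between the two $L^2$ spaces) gives
$$\langle L_sf,f\rangle_{L^2(\gamma)}
=\int_{\R^n}M_\gamma^{-1}H_sM_\gamma f(x)\,f(x)\,\gamma(x)\,dx
=\int_{\R^n}H_sg(x)\,g(x)\,dx=\langle H_sg,g\rangle_{L^2(\R^n)}.$$

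Applying the Hardy inequality for $H_s$ proven in the preceding corollary yields
$$\langle H_sg,g\rangle_{L^2(\R^n)}\ge c_{n,s}\int_{\R^n}\frac{g(x)^2}{(1+|x|^2)^s}\,dx
=c_{n,s}\int_{\R^n}\frac{|\gamma^{1/2}(x)f(x)|^2}{(1+|x|^2)^s}\,dx
=c_{n,s}\int_{\R^n}\frac{f(x)^2}{(1+|x|^2)^s}\,d\gamma(x),$$
which is exactly the claimed inequality. There is essentially no obstacle: the proof is a clean transference, and the only things to check are the intertwining of $L_s$ and $H_s$ through $M_\gamma$ (which follows formally from the analogous relation between the projections) and the verification that $g=M_\gamma f$ lies in the correct domain so that the prior corollary applies, both of which are immediate from the isometry $M_\gamma:L^2(\gamma)\to L^2(\R^n)$.
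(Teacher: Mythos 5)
Your proposal is correct and follows essentially the same route as the paper: conjugate by the Gaussian weight to reduce to the Hardy inequality already proved for $H_s$, using the intertwining of $L_s$ and $H_s$ through the projections $Q_k=M_\gamma^{-1}P_kM_\gamma$. The only cosmetic difference is that you use the normalized $M_\gamma f=\gamma^{1/2}f$ (making the isometry exact), whereas the paper takes $g=fe^{-|\cdot|^2/2}$ and lets the resulting factor of $\pi^{n/2}$ cancel on both sides.
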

\begin{proof}
	Let $f\in L^2(\gamma)$. Then it is easy to see that $g(x):=f(x)e^{-|x|^2/2}\in L^2(\mathbb{R}^n)$. By the above corollary we have 
	$$\langle H_sg,g\rangle_{L^2(\R^n)}\ge c_{n,s} \int_{\R^n}\frac{g(x)^2}{(1+|x|^2)^s}dx$$
	Also from the spectral decomposition we see that 
	$$H_sg(x )=H_{ s}(fe^{-|.|^2/2})(x)= e^{-|x|^2/2}L_{ s}f(x) $$
	which gives $\langle H_sg,g\rangle_{L^2(\R^n)}=\langle L_sf,f\rangle_{L^2(\gamma)}.$ Hence the result follows.
\end{proof}

\par 
\begin{rem}
In \cite{FLS}, Frank and Lieb proved that the constant appearing in the left hand side of the Hardy-Littlewood-Sobolev inequality \eqref{HLSH} for the sublaplacian on the Heisenberg group is sharp. It would be interesting to see the sharp constant in the analogous inequality \ref{HSLL} which we have proved for the Hermite operator.  
\end{rem}
 
\par
 \section*{Acknowledgments} 
 The first author  is supported by Int. Ph.D. scholarship from Indian Institute of Science. The second author is supported by C. V. Raman PDF, R(IA)CVR-PDF/2020/224 from Indian Institute of Science. And the third author is supported by  J. C. Bose Fellowship from D.S.T., Govt. of India.

\end{document}